\theoremstyle{plain} 
\newtheorem{theorem}{Theorem}[section]
\newtheorem{cor}[theorem]{Corollary}
\newtheorem{proposition}[theorem]{Proposition}
\newtheorem{corollary}[theorem]{Corollary}
\theoremstyle{definition} 
\newtheorem{definition}[theorem]{Definition}
\newtheorem{remark}[theorem]{Remark}
\newtheorem{example}[theorem]{Example}
\title{Skolem, Gödel, and Hilbert fibrations}
\author{Davide Trotta
\footnote{
University of Padova,
\href{mailto:trottadavide92@gmail.com}{trottadavide92@gmail.com}
}
\and
Jonathan Weinberger
\footnote{
Department of Mathematics, 
Johns Hopkins University, 
\href{mailto:jweinb20@jhu.edu}{jweinb20@jhu.edu}
} \and Valeria de Paiva
\footnote{
Topos Institute,
\href{mailto:valeria@topos.institute}{valeria@topos.institute}
}
}
\date{\today}
\begin{document}

\maketitle

\begin{abstract}
    Grothendieck fibrations are fundamental in capturing the concept of \emph{dependency}, notably in categorical semantics of type theory and programming languages. A relevant instance are \emph{Dialectica fibrations} which generalise Gödel's Dialectica proof interpretation and have been widely studied in recent years.  

We characterise when a given fibration is a generalised, \emph{dependent} Dialectica fibration, 
namely an iterated completion of a fibration by \emph{dependent} products and sums (along a given class of \emph{display maps}). 
From a technical perspective, we
complement the work of Hofstra on Dialectica fibrations by an internal viewpoint, categorifying the classical notion of quantifier-freeness. We also generalise both Hofstra's and Trotta et al.'s work on G\"odel fibrations
to the dependent case, replacing the class of cartesian projections in the base category by arbitrary display maps. 
We discuss how
this recovers a range of relevant examples in categorical logic and proof theory. Moreover, as another instance, we introduce \emph{Hilbert fibrations}, providing a categorical understanding of Hilbert's $\epsilon$- and $\tau$-operators well-known from proof theory.
\end{abstract}


\renewcommand{\headrulewidth}{0pt}

\pagestyle{fancy}
\fancyhead{} 
\fancyhead[CO,CE]{\textsc{D.~Trotta, J.~Weinberger, V.~de~Paiva: Skolem, Gödel, and Hilbert fibrations}}
\fancyfoot{} 
\fancyfoot[CE,CO]{\thepage}

\section{Introduction}
Gödel's \emph{Dialectica interpretation} (1958) aimed to reduce the problem of proving
the consistency of first-order arithmetic to the problem of proving the consistency of a
simply-typed system of computable functionals, the well-known \emph{System T}~\cite{godel58,goedel1986}.  Thirty years later, de Paiva introduced a categorification of Gödel's construction~\cite{depaiva1991dialectica}, by assigning to (a finitely complete) category $\mathsf C$ its \emph{Dialectica category} $\mathsf{Dial}(\mathsf C)$. In the following years, several people continued the study of the Dialectica interpretation from a categorical perspective. In particular, work of Hyland~\cite{Hyland2002}, Biering~\cite{Biering_dialecticainterpretations}, Hofstra~\cite{hofstra2011}, von Glehn, and Moss~\cite{moss2018dialectica}, generalised the Dialectica construction, assigning to a Grothendieck fibration $\mathsf p: \mathsf E \to \mathsf B$ its \emph{Dialectica fibration} $\mathfrak{Dial}(\mathsf p)$.
These works, particularly Hofstra's paper \cite{hofstra2011}, highlighted an abstract property underlying the Dialectica interpretation, namely the universal property of \emph{being an} $\exists\forall$\emph{-completion}.
The study of such free constructions involving quantifiers has played a significant role in the investigation of realizability in categorical logic~\cite{hofstra_2006}. Hofstra's result concerning the Dialectica interpretation aligns with this line of research.

In the past decade, the study and the application of these free (quantifier-like) completions have been addressed in various fields and by several authors: the first author introduced a general notion of $\exists$-completion in~\cite{trotta_ex_comp} in the context of Lawvere doctrines and proved that this construction is lax idempotent. Then, in joint work with Maietti, they provided an intrinsic characterisation of the $\exists$-completion~\cite{maiettitrotta2023} and used this construction to characterise the exact completion of elementary and existential doctrines~\cite{maiettitrotta24}. At the same time, and independently, Frey also provided an intrinsic description of the $\exists$-completion in the categorical setting of (posetal) fibrations, in order to use this tool to further investigate realizability from a categorical perspective~\cite{Frey2014AFS,Frey2020}. In the same setting, Maschio and Trotta used this notion to introduce and characterise a general notion of \emph{category of assemblies}~\cite{Trotta2023APAL}.
 We can also find applications of the $\exists$-completion in topos theory, through the notion of the geometric completion~\cite{wrigley2023}.

Concerning specific applications to the Dialectica interpretation, in recent work~\cite{trotta_et_al:LIPIcs.MFCS.2021.87}, Trotta et al.~generalised to the fibrational setting the characterisation presented in~\cite{maiettitrotta2023}, proving an \emph{internal} characterisation of the Dialectica construction, introducing  \emph{Skolem} and \emph{Gödel fibrations} 
as well as \emph{quantifier-free elements} of a fibration.  A relevant application of these notions and results is that they allow us to prove (in the proof-irrelevant setting) that the Dialectica doctrines satisfy the logical principles involved in Gödel's translation~\cite{trotta-lfcs2022,trotta23TCS,dialecticaprinciples2022}.

The main purpose of this work is to generalise the results presented in~\cite{trotta_et_al:LIPIcs.MFCS.2021.87} to the \emph{dependent case}, namely we are interested in characterising the constructions that freely add products and sums (or coproducts) to a fibration along an arbitrary class of display maps on the base.

The main reason we are interested in such a generalisation is that, while it is quite rare to find non-syntactical and genuine examples of fibrations arising as instances of the simple products and/or simple coproduct completions, we realized that the main used fibrations in the literature (such as the subobject or the codomain fibration) arise as instances of the dependent version of these completions. Moreover,  notions of \emph{polynomials} can also be captured through these dependent versions of the completions.

From a conceptual point of view, these generalisations provide us with a useful formal tool to properly compare and highlight the underlying common structure and the differences of categories of fibrations which, over the years, have been noticed to be similar to each other, such as Dialectica categories and categories of polynomials~\cite{moss2022:polyTalk}.

From a technical point of view, the \emph{external} generalisation of the Dialectica construction works as follows: recall that the objects of a Dialectica fibration are given by tuples $(I, U, X,\alpha)$ where $I, U, X$ are objects in the base and $\alpha$ is an element in the fibre over their product, playing the role of a predicate $\exists u\forall x\alpha(i,u,x)$. To extend the previous setting  to dependent types, we replace the  objects of a Dialectica fibration,  by tuples 
$\big(I, U \to I, \sum_u X_u,\alpha(i,u,x)\big)$ where $\alpha$ is an object over the fibre of $X$. 
In the non-dependent case, the completion process is done by adding products, then sums \emph{with respect to the class of cartesian projections} $\{I \times U \to I\}_{I, U \in \mathsf C}$. 
When we generalise  to  the \emph{dependent case}
we replace cartesian projections by maps of a fixed class of display maps $\mathcal F$ and,  (cartesian) exponents by $\mathcal F$-\emph{dependent products}. Thus the (simple) Dialectica fibration of a fibration $\mathsf p$ gets replaced by its generalised variant $\mathfrak{Dial}_\mathcal F(\mathsf p)$, which arises by freely adding fibred products and sums along, more generally, display maps in $\mathcal F$.

In order to provide an internal characterisation of this construction, following along the same lines as \cite{trotta_et_al:LIPIcs.MFCS.2021.87}, we start by generalising the crucial notions of $\coprod$-quantifier and $\prod$-quantifier-free elements (on which the main characterisation presented in \cite{trotta_et_al:LIPIcs.MFCS.2021.87} is built) as well as the notion of Skolem and Gödel fibrations.

The main intuition is that (dependent) Skolem fibrations are fibrations where every element can be written as a $\coprod_{\cF}$ (or $\sum_{\cF}$) of a $\coprod_{\cF}$-quantifier-free element satisfying a form of Skolemisation (relative to the class of display maps $\cF$). Gödel fibrations are Skolem fibrations where every element can be written as a $\coprod_{\cF} \prod_{\cF}$ (or  $\sum_{\cF} \prod_{\cF}$) of an element that is $\coprod_{\cF}$-quantifier-free and $\prod_{\cF}$-quantifier-free (with respect to the full subfibration of $\coprod_{\cF}$-quantifier-free elements).

Notice that, while in the simple product-coproduct case we only need to require exponents in the base to properly state the principle of Skolemisation, moving to the dependent case requires identifying another suitable diagram in the base along with considering the dependent Skolemisation. To achieve this problem, we identify the notion of \emph{strong dependent products}, that are a strong version of the notion of weak dependent products considered by Carboni, Rosolini~\cite{CarboniRosolini2000} and Menni~\cite{MENNI2002}.

Moreover, in this work, we take further advantage of these notions, and we introduce the novel notion of fibrations with Hilbert $\epsilon$- and $\tau$-operators. As we will prove, these notions provide  a categorification in the setting of fibrations of Hilbert's $\epsilon$- and $\tau$-calculus \cite{bell,Devidi95}. Recall that a categorification of the notion of Hilbert's $\epsilon$-operator in the language of Lawvere doctrines has already appeared in the work  \cite{maiettipasqualirosolini} by Maietti, Pasquali, and Rosolini, and it has been further studied and used in \cite{maiettitrotta2023,trotta23TCS}. However, its generalisation to the proof-relevant setting is non-straightforward, and it requires  proper categorical notions of quantifier-free elements for fibrations to be properly addressed.

\section{Grothendieck fibrations}
In this section, we briefly recall some standard notions regarding fibrations. We borrow heavily from \cite{Jacobs1999} and \cite{taylor199} for our presentation of these definitions.

\begin{definition}[cartesian arrow]
Let $\fibration{\mE}{\mp}{\mB}$ be a functor and $\arrow{X}{f}{Y}$ an arrow in $\mE$. Let us call $\arrow{A}{u:=\mp (f)}{B}$ the arrow $\mp(f)$ of $\mB$. We say that $f$ is \textbf{cartesian over} $\boldsymbol{u}$ if, for every morphism $\arrow{Z}{g}{Y}$ in $\mE$ and every morphism $\arrow{C}{w}{A}$ in $\mB$ with $\mp (g)=u w$ there exists a unique arrow $\arrow{Z}{h}{X}$ of $\mE$ such that $g=f h$ and $\mp (h)=w$.
\end{definition}

\begin{figure}
    \centering
\[\begin{tikzcd}
	&& Z \\
	{\mathsf{E}} &&& X && Y \\
	\\
	&& C \\
	{\mathsf{B}} &&& A && B
	\arrow["u"', from=5-4, to=5-6]
	\arrow["{\mathsf{p}(g)=uw}", from=4-3, to=5-6]
	\arrow["{\mathsf{p}}"', from=2-1, to=5-1]
	\arrow["{\forall \,g}", from=1-3, to=2-6]
	\arrow["f"', from=2-4, to=2-6]
	\arrow["{\exists !\, h}"', dashed, from=1-3, to=2-4]
	\arrow["{\forall\, w}"', from=4-3, to=5-4]
\end{tikzcd}\]
    \caption{Universal property of cartesian arrows}
    \label{fig:cart-arr}
\end{figure}
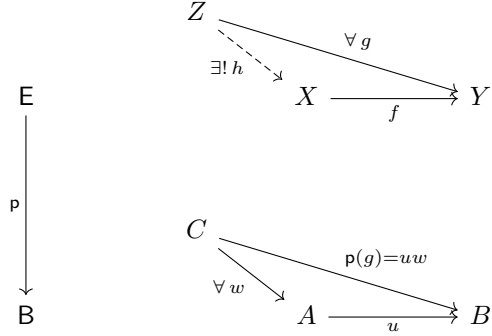

\begin{definition}[Grothendieck fibration]
A \bemph{(Grothendieck) fibration} is a functor $\fibration{\mE}{\mp}{\mB}$ such that, for every $Y$ in $\mE$ and every $\arrow{I}{u}{\mp Y}$, there exists a cartesian arrow  $\arrow{X}{f}{Y}$ over $u$. Such an arrow $f$ is called a \textbf{cartesian lifting} of $u$ with respect to $Y$.
\end{definition}

For a given fibration $\fibration{\mE}{\mp}{\mB}$, and for any $A$ in $\mB$, let $\mE_A$ be the \textbf{fibre category} over $A$: its objects are the objects $X$ of $\mE$ such that $\mp X=A$, and its morphisms, which are said to be \textbf{vertical}, are the morphisms $\arrow{X}{f}{Y}$ of $\mE$ such that $\mp (f)=\id_A$.

It is well-known that cartesian liftings of an arrow with respect to a fixed codomain are determined uniquely up to unique vertical isomorphism. A choice of cartesian liftings is called a \textbf{cleavage}.

Recall from \cite{Jacobs1999} that a fibration is called \textbf{cloven} if it comes together with a choice of cartesian listings, \ie~a cleavage; and it is called \textbf{split} if it is cloven and the given liftings
are well-behaved in the sense that they satisfy certain strict functoriality conditions.

By the global axiom of choice, any fibration can always be assumed cloven (but not necessarily split).

In a cloven fibration $\fibration{\mE}{\mp}{\mB}$, for every morphism $\arrow{B}{u}{\mp(Y)}$ of $\mB$ we denote the chosen cartesian lifting of $u$ by $\arrow{u^\ast (Y)}{\ovln{u}Y}{Y}$. Then we can define the \textbf{substitution functor}: $$ \arrow{\mE_B}{u^\ast}{\mE_A}$$ sending $X$ to $u^\ast(X)$ and a vertical morphism $\arrow{X}{f}{Y}$ to the unique mediating map $u^*(f)$ in:
\[\begin{tikzcd}
	{u^*(X)} & X \\
	{u^*(Y)} & Y
	\arrow["{\overline{u}(Y)}"', from=2-1, to=2-2]
	\arrow["f", from=1-2, to=2-2]
	\arrow["{\overline{u}X}", from=1-1, to=1-2]
	\arrow["{u^*(f)}"', dashed, from=1-1, to=2-1]
\end{tikzcd}\]
The next concept we need to recall is the \textbf{opposite} of a fibration. Recall from \cite[Lemma 1.4.10]{Jacobs1999} that, given a fibration $\fibration{\mE}{\mp}{\mB}$, for every cleavage of $\mp$ one has the isomorphism of sets (or classes): 
$$ \begin{aligned}
\mE(X,Y)&\overset{\cong}{\to} \coprod_{u:\mp X \rightarrow \mp Y} \mE_{\mp X}(X, u^{\ast}(Y)) \\
f&\mapsto (\mp f, f')
\end{aligned}$$
where $\coprod$ is the disjoint union and $f'$ is the unique vertical arrow such that $f=(\overline{ \mp f}(\mp Y))f'$. This means that a morphism in a total category $\mE$ corresponds to a morphism in the basis together with a vertical map. The intuition behind the definition of the opposite fibration is that all vertical maps in such composites are reversed. 

\begin{definition}[opposite fibration, \cf~\cite{Jacobs1999}]\label{def_op_fibration}
Let $\fibration{\mE}{\mp}{\mB}$ be a fibration.  We describe a new fibration over the same base written as $\opfibration{\mE}{\mp}{\mB}$, which is fibrewise the opposite of $\mp$, called the \textbf{(fibrewise) opposite} or \textbf{dual} of $\mp$.
\end{definition}
Let $CV$ be the class:
$$\{ (f_1,f_2)| f_1\mbox{ is Cartesian, } f_2 \mbox{ is vertical and} \dom (f_1)=\dom (f_2)\}.$$ An equivalence relation is defined on the collection $CV$ by $(f_1,f_2)\sim (g_1,g_2)$ if there exists an arrow $h$ such that $f_1=g_1h$ and $f_2=g_2h$. The equivalence class of $(f_1,f_2)$ is denoted by $[f_1,f_2]$.
The total category $\mE^{(\op)}$ has the same objects of $\mE$, and morphisms $X\rightarrow Y$ are equivalence classes $[f_1,f_2]$ of arrows $f_1$ and $f_2$, with $f_1$ cartesian and $f_2$ vertical, as in:
\[\begin{tikzcd}
	X \\
	\bullet & Y
	\arrow["{f_2}", from=2-1, to=1-1]
	\arrow["{f_1}"', from=2-1, to=2-2]
\end{tikzcd}\]
The composition $[g_1,g_2]\circ [f_1,f_2]$ is described by the following diagram:
\[\begin{tikzcd}[column sep=huge, row sep=huge]
	X \\
	A & Y \\
	{\mp f_1^*(B)} & B & Z
	\arrow["{f_2}", from=2-1, to=1-1]
	\arrow["{f_1}"', from=2-1, to=2-2]
	\arrow["{g_2}"', from=3-2, to=2-2]
	\arrow["{g_1}"', from=3-2, to=3-3]
	\arrow["{\overline{\mp f_1}B}"', dashed, from=3-1, to=3-2]
	\arrow["x", dashed, from=3-1, to=2-1]
\end{tikzcd}\]
where $x$ is the unique vertical arrow arising by cartesianness
of $f_1$ and making the diagram commute. We define the composition $[g_1,g_2]\circ [f_1,f_2]$ to be the class: $$[g_1(\ovln{\mp f_1} B),f_2x]$$ which turns out to be well-defined. See \cite[Definition 1.10.11]{Jacobs1999} for more details.

The functor $\opfibration{\mE}{\mp}{\mB}$ is defined by the assignments $X\mapsto \mp X$ and $[f_1,f_2]\mapsto \mp (f_1)$, and it is well-defined since $f_2$ is vertical.

We recall some well-known examples of fibrations:
\begin{example}[codomain fibration]\label{ex:codomain fibration}
    For an arbitrary category $\mB$ we denote by $\mB^{\to}$ its arrow category: the objects of $\mB^{\to}$ are arrows of $\mB$, and an arrow from $X\xrightarrow{f}Y$ to $K\xrightarrow{g}Z$ is given by a pair of morphisms $\arrow{X}{u}{K}$ and $\arrow{Y}{v}{Z}$ such that $ vf=gu$. The codomain functor $\fibration{\mB^{\to}}{\mathsf{cod}}{\mB}$ is a fibration exactly when $\mB$ has pullbacks (cartesian morphisms coincide with pullbacks). The fibre category over an object $X$ of $\mB$ is given by the slice category $\mB/X$. The opposite of the codomain fibration is the category of \emph{lenses}, \cf~\cite{SpivakGenLenses,CapucciMRC2024}.
\end{example}
\begin{example}[family fibration of a category]\label{ex:family fibration}
    For an arbitrary category $\mB$, we denote by $\Fam (\mB)$ the category of set-indexed families of objects and arrows of $\mB$: objects of $\Fam (\mB)$ are collections $(X_i)_{i\in I}$ of objects $X_i$ of $\mB$ such that every $i$ is an element of a set $I$. An arrow $(X_i)_{i\in I}\to (Y_j)_{j\in J}$ of $\Fam (\mB)$ is given by a function $\arrow{I}{u}{J}$ and a family $(f_i)_{i\in I}$ of morphisms $\arrow{X_i}{f_i}{Y_{u(i)}}$ in $\mB$.  The projection functor $\fibration{\Fam (\mB)}{\mp}{\set}$ mapping $(X_i)_{i\in I}\to I$ and $(u,(f_i)_{i\in  I})\to u$ is a fibration called \textbf{family fibration}. The fibre category $\Fam (\mB)_I$ over a set $I$ is the category of $I$-indexed families of objects and arrows in $\mB$.
\end{example}

\subsection{Opfibrations and bifibrations}

While fibrations admit contravariant transport between the fibres, there exists a dual notion of fibration whose transport is covariant.

\begin{definition}[cocartesian arrows and opfibrations]
    Let $\fibration{\mE}{\mp}{\mB}$ be a fibration. A \textbf{cocartesian arrow} in $\mp$ is a cartesian arrow in $\fibration{\mE^{\op}}{\mp^{\op}}{\mB^{\op}}$. We call $\mp$ a \textbf{(Grothendieck) opfibration} if $\mp^{\op}$ is a fibration.
\end{definition}

If $\fibration{\mE}{\mp}{\mB}$ is an opfibration and $\arrow{A}{u}{B}$ an arrow in $\mB$ then by cocartesian transport we get an induced functor $\coprod_u \colon \mE_A \to \mE_B$. If a functor $\fibration{\mE}{\mp}{\mB}$ is both a fibration and an opfibration we call it a \textbf{bifibration}. If, in addition, for every arrow $\arrow{A}{u}{B}$ cartesian reindexing $u^*$ posesses a right adjoint, written $\prod_u$, we call $\fibration{\mE}{\mp}{\mB}$ a \textbf{trifibration}. Hence, trifibrations admit adjoint triples
\[ \coprod_u \dashv u^* \dashv \prod_u.\]
In logic, $u^*$ is reindexing or substitution, while $\coprod_u$ can be understand as existential quantification or a dependent sum object former, and $\prod_u$ is universal quantification or a dependent function object former. This fundamental idea of viewing quantifiers as adjoints goes back to Lawvere~\cite{Lawvere:Adjoint}. We will make this more precise in the next subsection. However, we will usually require the left and right adjoints to exist only for a subclass of morphisms of the base category.

\paragraph{Notation.}
We will employ the following notation for the kinds of arrows in (op)fibrations. We denote:
\begin{itemize}
    \item vertical arrows by $\rightsquigarrow$
    \item cartesian arrows by $\cartarr$
    \item cocartesian arrows by $\cocartarr$
\end{itemize}

\subsection{Fibred (co)products}
In categorical logic, the notion of \emph{display map} generalises the ordinary notion of \emph{product projection}, and plays a crucial role in the categorical semantics of dependent type theory. In the following definition, we recall the notion of \emph{display map category} from \cite[Def. 10.4.1]{Jacobs1999} that is, among the various definitions appearing in the literature, the most general as it just requires the closure under pullbacks:
\begin{definition}[display map categories]\label{def:disp-maps}
Let $\mathsf B$ be a category. A class $\cF \subseteq \mor(\mB)$ is said to be a class of \textbf{display maps} if $\cF$ is closed under pullbacks along arbitrary maps in $\mB$, namely pullbacks along arrows of $\cF$ exist and belong to $\cF$. If an arrow $u:K \to I$ is in $\cF$ we write it as $u:K \tofib I$.
A \textbf{display map category} is a pair $\pair{\mB}{\cF}$ where $\mB$ is a category and $\cF$ a class of display maps.
\end{definition}

\begin{definition}[closure properties of display map categories]\label{def_well_rooted_disp_map_cat}
 A display map category $\pair{\mB}{\cF}$:
 \begin{itemize}
    \item has \textbf{units} if all the isomorphism of $\mB$ are in $\cF$;
     \item has $\cF$-\textbf{dependent coproducts}\footnote{A display map category satisfying this condition is said to have \emph{strong sums} in \cite[p. 610]{Jacobs1999}.} if $\cF$ is closed under composition;
     \item is \textbf{well-rooted} if $\mB$ has a terminal object $1$ and for every object $X$ of $\mB$ the unique arrow $\arrow{X}{!_X}{1}$ is in $\cF$.
 \end{itemize}
\end{definition}

Given a display map category $\pair{\mB}{\cF}$, we will denote by $\cF^{\to}$ the full subcategory of the arrow category $\mB^{\to}$ whose objects are arrows of $\cF$, and we denote by $\fibration{\cF^{\to}}{\mathsf{cod}}{\mB}$ the corresponding (full) subfibration of the codomain fibration $\fibration{\mB^{\to}}{\mathsf{cod}}{\mB}$ (following the notation used in  \cite[p. 610]{Jacobs1999}.

Now we consider a ``strong'' and ``display  map-relative'' version of the notion \emph{weak dependent products} as presented in \cite[Def. 2.1]{MENNI2002} and \cite{CarboniRosolini2000}. Recall that among the categories with finite limits, those with weak dependent products are exactly the ones whose exact completions are locally cartesian
closed.
\begin{definition}[$\cF$-dependent products]\label{def:F-strong-dep-prod}
    Let $\pair{\mB}{\cF}$ be a display map category. An $\cF$-\textbf{dependent product}  of an arrow $f:K \tofib I$ of $\cF$ along another arrow $g:I \tofib J$  of $\cF$ consists of a commutative diagram 
\[\begin{tikzcd}[column sep=huge, row sep=huge]
	K & Z & E \\
	& I & J
	\arrow["f"', two heads, from=1-1, to=2-2]
	\arrow["g"', two heads, from=2-2, to=2-3]
	\arrow[two heads, from=1-2, to=2-2]
	\arrow["h", two heads, from=1-3, to=2-3]
	\arrow[two heads, from=1-2, to=1-3]
	\arrow["e"', from=1-2, to=1-1]
	\arrow["\lrcorner"{anchor=center, pos=0.125}, draw=none, from=1-2, to=2-3]
\end{tikzcd}\]
where the square is a pullback and $h: E\tofib J$ is an arrow of $\cF$, such that for every commutative diagram
\[\begin{tikzcd}[column sep=huge, row sep=huge]
	K & Z' & E' \\
	& I & J
	\arrow["f"', two heads, from=1-1, to=2-2]
	\arrow["g"', two heads, from=2-2, to=2-3]
	\arrow[two heads, from=1-2, to=2-2]
	\arrow["h'", two heads, from=1-3, to=2-3]
	\arrow[two heads, from=1-2, to=1-3]
	\arrow["e'"', from=1-2, to=1-1]
	\arrow["\lrcorner"{anchor=center, pos=0.125}, draw=none, from=1-2, to=2-3]
\end{tikzcd}\]
there exists a unique pair of arrows $w:Z'\to Z$ and $k:E'\to E$ (neither of them necessarily in $\cF$) such that the diagram
\[\begin{tikzcd}[column sep=huge, row sep=huge]
	&& {Z'} & {E'} \\
	K & Z & E \\
	& I & J
	\arrow["f"', two heads, from=2-1, to=3-2]
	\arrow["g"', two heads, from=3-2, to=3-3]
	\arrow[two heads, from=2-2, to=3-2]
	\arrow["h", two heads, from=2-3, to=3-3]
	\arrow["e"', from=2-2, to=2-1]
	\arrow["\lrcorner"{anchor=center, pos=0.125}, draw=none, from=2-2, to=3-3]
	\arrow["{h'}", curve={height=-12pt}, two heads, from=1-4, to=3-3]
	\arrow[two heads, curve={height=-6pt}, from=1-3, to=3-2]
	\arrow[two heads, from=1-3, to=1-4]
	\arrow["{e'}"', curve={height=12pt}, from=1-3, to=2-1]
	\arrow["w"', dashed, from=1-3, to=2-2]
	\arrow["k", dashed, from=1-4, to=2-3]
	\arrow["\lrcorner"{anchor=center, pos=0.125}, draw=none, from=1-3, to=3-3]
    \arrow[two heads, from=2-2, to=2-3, crossing over]
\end{tikzcd}\]
commutes. We say that  $\pair{\mB}{\cF}$ has $\cF$-\textbf{dependent products} if any pair of arrows of $\cF$ has an $\cF$-dependent product.
\end{definition}
The ``strong version'' of the original definition of weak dependent products is obtained as a particular case of the previous one by considering the class of all the morphisms of a given category $\mB$ with finite limits. The use of the word ``weak'' in the original setting is motivated by the fact that the uniqueness of the stipulated arrow $k$ in \Cref{def:F-strong-dep-prod} is not required.

\begin{corollary}
Note that the existence of an arrow $k:E'\to E$ in~\Cref{def:F-strong-dep-prod} implies, by the standard properties of pullbacks, that the arrow $w:Z'\to Z$ is $w=(h^*g)^*k$. Hence, the condition of~\Cref{def:F-strong-dep-prod} is tantamount to just demanding the existence of a map $k$ as indicated such that $h' = hk$.
Furthermore, note that the maps $g^*h$ and $g^*h'$ both are in $\cF$ as well by pullback closure.
\end{corollary}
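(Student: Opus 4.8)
The plan is to read everything off the two pullback squares appearing in \Cref{def:F-strong-dep-prod}, using only the pasting lemma for pullbacks and the closure of $\cF$. Write $p = g^\ast h \colon Z \to I$ and $q = h^\ast g \colon Z \to E$ for the two projections exhibiting $Z$ as the pullback $I \times_J E$ of $g$ and $h$, and analogously $p' = g^\ast h' \colon Z' \to I$ and $q' \colon Z' \to E'$ for the projections of $Z' = I \times_J E'$. With this naming, commutativity of the large diagram unwinds to the four equations $p w = p'$, $e w = e'$, $q w = k q'$ and $h k = h'$, of which all but the third are triangles and the third is the back square. The whole point will be that, once $k$ is fixed, the pullback structure pins down $w$.

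For the forcing of $w$, I would assume given $k \colon E' \to E$ with $h k = h'$ and form the pullback of $k$ along $q = h^\ast g$, with apex $P$ and projections $\pi_1 = (h^\ast g)^\ast k \colon P \to Z$ and $\pi_2 \colon P \to E'$, so that $q \pi_1 = k \pi_2$. Stacking this square on top of the pullback square $g p = h q$ along their shared edge $q$, the pasting lemma yields that the outer rectangle, with legs $p \pi_1 \colon P \to I$ and $\pi_2 \colon P \to E'$ sitting over $g$ and $h k = h'$, is itself a pullback of $g$ along $h'$. Hence $P$ is canonically isomorphic to $Z' = I \times_J E'$, the isomorphism matching $p \pi_1$ with $p'$ and $\pi_2$ with $q'$. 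Transporting $\pi_1$ across this identification produces a map $Z' \to Z$ whose composites with $p$ and $q$ are $p'$ and $k q'$; since $Z$ is a pullback, these two composites determine the map uniquely, so it must be the $w$ of the definition. This is exactly the asserted equality $w = (h^\ast g)^\ast k$ (under the identification $Z' \cong P$).

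It then follows that $w$ is redundant data: fixing $k$ with $h k = h'$ and taking the induced $w = (h^\ast g)^\ast k$ makes the conditions $p w = p'$ and $q w = k q'$ hold automatically, so the commutativity of the whole diagram becomes a condition on $k$ alone (through the now-forced $w$, in particular the requirement $e w = e'$). Thus the universal property is tantamount to exhibiting a unique $k$ with $h' = h k$ making the diagram commute, as claimed. The ``Furthermore'' is immediate from \Cref{def:disp-maps}: the maps $g^\ast h = p$ and $g^\ast h' = p'$ are pullbacks of the display maps $h, h' \in \cF$ along $g$, hence lie in $\cF$ by closure of $\cF$ under pullback. The only step needing genuine care is the bookkeeping in the pasting argument—orienting the two squares so that their composite is precisely the square defining $Z'$—but this is routine once the projections have been named.
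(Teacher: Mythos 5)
Your proof is correct and is essentially the argument the paper leaves implicit: the paper's appeal to ``standard properties of pullbacks'' is precisely your pasting-lemma computation identifying $Z' \cong Z \times_E E'$ and thereby forcing $w = (h^*g)^*k$, with the residual condition $ew = e'$ then depending on $k$ alone, and the ``Furthermore'' part following from pullback-closure of $\cF$ exactly as you say.
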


\begin{example}\label{ex_dep_prod_and_cart_closed}
Let $\pair{\mB}{\cF}$ be a display map category, where $\mB$ is a cartesian category and $\cF$ is the class of product projections. Then, if the category $\mB$ has $\cF$-dependent products in the sense of \Cref{def:F-strong-dep-prod} it is cartesian closed. In particular we can define an exponent $Y^X$ and the evaluation map $\arrow{X\times Y^X}{\ev}{Y}$ by considering the $\cF$-dependent product of the product projection $\arrow{X\times Y}{\pr_X}{Y}$ along the terminal projection $\arrow{X}{!_X}{1}$:

\[\begin{tikzcd}[column sep=huge, row sep=huge]
	X\times Y & X\times Y^X & Y^X \\
	& X & 1
	\arrow["\pi_X"', two heads, from=1-1, to=2-2]
	\arrow["!_X"', two heads, from=2-2, to=2-3]
	\arrow["\pi_X",from=1-2, to=2-2]
	\arrow["!_{Y^X}", two heads, from=1-3, to=2-3]
	\arrow[two heads, "\pi_{Y^X}",from=1-2, to=1-3]
	\arrow["\angbr{\pr_X}{\ev}"', from=1-2, to=1-1]
	\arrow["\lrcorner"{anchor=center, pos=0.125}, draw=none, from=1-2, to=2-3]
\end{tikzcd}\]
\end{example}

\begin{remark}
   Observe that when $\cF$ is the class of all the morphisms of $\mB$ the previous example can be generalised to show that $\mB$ is locally cartesian closed. For more details about some variants of the notion of dependent products and their link with the notion of exact completions and locally cartesian closed category, we refer to \cite{MENNI2002} and \cite{EMMENEGGER2020106414}.
\end{remark}
\begin{definition}[Fibrations with fibred (co)products]\label{def:fib-compl}
Let $\pair{\mB}{\cF}$ be a display map category. A Grothendieck fibration $\fibration{\mE}{\mP}{\mB}$ is said to \textbf{have (fibred) coproducts along $\cF$} or \textbf{$\cF$-coproducts} whenever the following conditions are satisfied:
\begin{enumerate}
    \item for any $v:L \tofib J$ in $\cF$, the reindexing functor $v^*: \mE_J \to \mE_L$ has a left adjoint $\coprod_v: \mE_L \to \mE_J$.
    \item for each pullback of the form
\[\begin{tikzcd}
	K && I \\
	L && J
	\arrow["g"', from=1-1, to=2-1]
	\arrow["u", two heads, from=1-1, to=1-3]
	\arrow["v"', two heads, from=2-1, to=2-3]
	\arrow["f", from=1-3, to=2-3]
	\arrow["\lrcorner"{anchor=center, pos=0.125}, draw=none, from=1-1, to=2-3]
\end{tikzcd}\]
in $\mathsf B$, the following natural transformation is an isomorphism, \ie, the \textbf{Beck--Chevalley condition} holds:
\begin{align}\label{eq:left-bcc}
    \coprod_u g^* \stackrel{\cong}{\Longrightarrow} f^*\coprod_v
\end{align}
Analogously, $\mathsf p$ is said to have \textbf{have (fibred) products along $\cF$} or \textbf{$\cF$-products} if for any $v \in \cF$ the reindexing functor $v^*$ has a right adjoint $\prod_v$, and for any square as above the following Beck--Chevalley condition for the adjoint pair $v^* \dashv \prod_v$ is satisfied, \ie, the following natural transformation is an isomorphism:
\begin{align}\label{eq:right-bcc}
f^* \prod_v \stackrel{\cong}{\Longrightarrow} \prod_u g^*
\end{align}

Fixing $v \in \cF$, if (\ref{eq:left-bcc}) holds for all $f$ as indicated, we say that $v$ \textbf{satisfies the left BCC}. Analogously, we say that $v$ \textbf{satisfies the right BCC} if (\ref{eq:right-bcc}) holds for all $f$ as indicated.\footnote{Maps for which reindexing has a left adjoint satisfying the left BCC are often called \emph{smooth} in geometric contexts. Maps for which reindexing has a right adjoint satisfying the right BCC are called \emph{proper}. A unifying view of smooth and proper maps in geometry and logic is given in~\cite{AnelWeinberger:2024}.}

\end{enumerate}
\end{definition}

The following proposition explains how having $\cF$-dependent (co)products for a display map category $\pair{\mB}{\cF}$ provides a stronger property than having fibred $\cF$-(co)products for $\fibration{\cF^{\to}}{\mathsf{cod}}{\mB}$.
\begin{proposition}
Let $\pair{\mB}{\cF}$ be a display map category, and let $\fibration{\cF^{\to}}{\mathsf{cod}}{\mB}$ be the codomain fibration.  Then 
\begin{enumerate}
    \item $\mB$ has $\cF$-dependent coproducts if and only if the codomain fibration $\fibration{\cF^{\to}}{\mathsf{cod}}{\mB}$ has fibred $\cF$-coproducts;
    \item if $\mB$ has $\cF$-dependent products then the codomain fibration $\fibration{\cF^{\to}}{\mathsf{cod}}{\mB}$ has fibred $\cF$-products.
\end{enumerate}

\end{proposition}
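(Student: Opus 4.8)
The plan is to compute the reindexing, left, and right adjoints of the codomain fibration $\fibration{\cF^{\to}}{\mathsf{cod}}{\mB}$ explicitly and match them against the two notions of dependent (co)products. Recall that the fibre $(\cF^{\to})_J$ is the full subcategory of the slice $\mB/J$ on the display maps into $J$, that for $v\colon L\tofib J$ the reindexing $v^{\ast}$ is pullback along $v$ (which exists on all of $\mB/J$, since it is the pullback of the display map $v$ along an arbitrary map), and that a cartesian arrow is exactly a pullback square. The two candidate adjoints are then forced: the left adjoint $\coprod_v$ must be post-composition $v\circ(-)$, and the right adjoint $\prod_v$ must be the dependent product. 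I would carry out these identifications and verify the two Beck--Chevalley conditions separately.

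For part (1), post-composition $v\circ(-)$ sends $g\colon M\to L$ in $\cF$ to $v\circ g$, and this is a functor valued in $(\cF^{\to})_J$ exactly when $\cF$ is closed under composition, i.e.\ when $\mB$ has $\cF$-dependent coproducts. When it is, the adjunction $\coprod_v\dashv v^{\ast}$ is immediate from the universal property of the pullback defining $v^{\ast}$, and the left Beck--Chevalley isomorphism $\coprod_u g^{\ast}\cong f^{\ast}\coprod_v$ holds automatically: for a display map over $L$ both sides compute, by the pullback pasting lemma, the same object over $I$, and every intermediate object lies in $\cF$ by closure under pullback and composition. Conversely, since post-composition is the (essentially unique) left adjoint to $v^{\ast}$ already in the full codomain fibration $\mB^{\to}$, the existence of a left adjoint valued in $\cF^{\to}$ forces $\coprod_v(g)\cong v\circ g$ to lie in $\cF$, which is closure under composition. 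This yields the stated equivalence.

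For part (2), given $v\colon L\tofib J$ and a display map $a\colon A\tofib L$, the $\cF$-dependent product of $a$ along $v$ supplies a display map $h\colon E\tofib J$ together with the pullback $Z=v^{\ast}h$ and a map $e\colon Z\to A$ over $L$. I would read off that the universal property of \Cref{def:F-strong-dep-prod}, in the streamlined form of the corollary following it, says precisely that for every display map $c$ over $J$ the assignment $k\mapsto e\circ v^{\ast}k$ is a bijection $(\cF^{\to})_J(c,h)\cong(\cF^{\to})_L(v^{\ast}c,a)$; that is, $h=\prod_v a$ with counit $e$. Functoriality of $\prod_v$ and naturality of this bijection follow from the uniqueness clause, so $v^{\ast}\dashv\prod_v$ holds fibrewise.

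The hard part is the right Beck--Chevalley condition $f^{\ast}\prod_v\cong\prod_u g^{\ast}$. Using that $f^{\ast}h$ is again a display map (a pullback of $h$), one can build the canonical comparison $\Lambda\colon f^{\ast}\prod_v a\to\prod_u g^{\ast}a$ \emph{entirely inside} $\cF$, by feeding the display map $f^{\ast}h$ and the reindexed counit $g^{\ast}e\colon u^{\ast}f^{\ast}h\cong g^{\ast}v^{\ast}h\to g^{\ast}a$ into the universal property of $\prod_u g^{\ast}a$. The obstacle is proving $\Lambda$ invertible: testing it against a display map $b$ over $I$ and applying the pasting lemma reduces exactly to the claim that $h$ represents $c\mapsto(\cF^{\to})_L(v^{\ast}c,a)$ against the test object $c=f\circ b$, which in general is \emph{not} a display map, since $f$ is arbitrary. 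I therefore expect the crux to be upgrading the dependent-product universal property from display test maps to arbitrary ones. I would attack this by base-changing the whole defining diagram along $c$ — all the maps involved ($v$, $h$, $a$) lie in $\cF$, so these pullbacks exist — so as to reduce a general test object to the identity test map $\mathrm{id}_C$ over $C$ (using that the display-map category has units) together with the stability of the defining pullback square. Equivalently, this step can be phrased as transporting the automatically valid \emph{coproduct} Beck--Chevalley isomorphism through the mate correspondence, the one subtlety being that post-composition leaves $\cF$; this asymmetry is why part (2) is most naturally stated in a single direction.
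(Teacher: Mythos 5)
Your identification of the fibrewise structure agrees with the paper: for (1) the left adjoint is post-composition (the paper itself offers no proof here, citing Jacobs and Taylor), and for (2) your reading of \Cref{def:F-strong-dep-prod} as saying that $h=\prod_v a$ represents $c\mapsto \cF^{\to}_L(v^*c,a)$ on \emph{display} test objects $c$ is exactly how the paper builds its right adjoint. The first genuine gap is your converse in (1). Uniqueness of adjoints cannot be invoked: $v^*\colon \cF^{\to}_J\to\cF^{\to}_L$ and $v^*\colon \mB/J\to\mB/L$ are different functors, and the adjunction only tells you that $\coprod_v(g)$ represents the functor $\mB/J(vg,-)$ \emph{restricted to the full subcategory} $\cF^{\to}_J$; since $vg$ need not lie in $\cF^{\to}_J$, Yoneda gives nothing, and the representing object can genuinely differ from $vg$. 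Indeed the implication your argument would prove (fibrewise left adjoints imply closure under composition) is false: on the three-element chain $m<l<\top$ with $\cF$ consisting of the identities together with $m\le l$ and $l\le\top$ (this class is pullback-closed), every reindexing between $\cF$-fibres has a left adjoint, with $\coprod_{l\le\top}(m\le l)=(l\le\top)$, yet $m\le\top\notin\cF$. What fails in this example is the Beck--Chevalley condition of \Cref{def:fib-compl} (pull the square back along $l\le\top$ itself), which is part of the hypothesis ``has fibred $\cF$-coproducts'' and which any correct proof of the converse must use; your argument never appeals to it.

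For (2), your diagnosis of the crux is correct and in fact sharper than the paper's own proof, which disposes of the Beck--Chevalley verification with ``it is direct to check'' that the pulled-back diagram satisfies the universal property of $\cF$-dependent products; the difficulty you isolate --- that \Cref{def:F-strong-dep-prod} quantifies only over display test objects, while Beck--Chevalley forces you to represent maps out of composites $f\circ b$ with $f$ arbitrary --- is hiding exactly there. But your proposed repair is circular: base-changing the defining diagram along $c$ and appealing to ``stability of the defining pullback square'' assumes that the base-changed diagram is again an $\cF$-dependent product, which \emph{is} the Beck--Chevalley isomorphism you are trying to prove; and the mate correspondence does not help, because the coproduct Beck--Chevalley isomorphism that holds automatically belongs to the full codomain fibration, whose post-composition adjoints do not restrict to $\cF^{\to}$. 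Moreover, no argument can close this gap from the stated hypotheses, because the claim itself fails: on the lattice with elements $0,k,y,i,e,x,\top$ determined by $k<i$, $k<e$, $y<e$, $y<x$, $i\wedge e=k$, $i\wedge x=0$, $e\wedge x=y$, let $\cF$ consist of all identities together with all maps $i\wedge z\le z$ and $e\wedge z\le z$. This class is pullback-closed, every pair of composable display maps has an $\cF$-dependent product, and $\prod_{i\le\top}(k\le i)=(e\le\top)$; but reindexing this product along the map $x\le\top$ gives $y\le x$, whereas $\prod_{0\le x}(\id_0)=\id_x$, so the canonical comparison is not invertible. Thus, unlike in (1), where the statement is true and only your argument is broken, in (2) the display-level universal property genuinely does not imply the Beck--Chevalley condition: the statement needs either a stronger notion of $\cF$-dependent product (one tested against arbitrary maps, or required to be pullback-stable) or a weaker conclusion (fibrewise adjunctions only). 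Your instinct about where the real content lies is right; the fix you sketch is not available.
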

\begin{proof}
1) This is a standard result of categorical logic. We refer to \cite{Jacobs1999} or \cite{taylor199} for all the details.

    2) Suppose that $\mB$ has $\cF$-dependent products. Then for any arrow $f:I \tofib J$ of $\cF$, we can define a functor $\prod_f: \cF_I^{\to} \to \cF_J^{\to}$, where $\prod_f(v) \colon E \fibarr J$ is part of the $\cF$-dependent product diagram 
    \[\begin{tikzcd}[column sep=huge, row sep=huge]
	K & Z & E \\
	& I & J
	\arrow["v"', two heads, from=1-1, to=2-2]
	\arrow["f"', two heads, from=2-2, to=2-3]
	\arrow[two heads, from=1-2, to=2-2]
	\arrow["\prod_f(v)", two heads, from=1-3, to=2-3]
	\arrow[two heads, from=1-2, to=1-3]
	\arrow["e"', from=1-2, to=1-1]
	\arrow["\lrcorner"{anchor=center, pos=0.125}, draw=none, from=1-2, to=2-3]
\end{tikzcd}\]
and the action of $\prod_f$ on a morphism $\alpha :u \to v$ in $\cF_I^{\to} $ is defined by employing the universal property of $\cF$-dependent products:
\[\begin{tikzcd}[column sep=huge, row sep=huge]
	&& K' & Z' & {E'} \\
	K & Z & E & {}\\
	& I & J
	\arrow["v"', two heads, from=2-1, to=3-2]
	\arrow["f"', two heads, from=3-2, to=3-3]
	\arrow[two heads, from=2-2, to=3-2]
	\arrow[from=2-2, to=2-1]
	\arrow[curve={height=-18pt}, two heads, from=1-4, to=3-2]
	\arrow["\alpha"', from=1-3, to=2-1]
	\arrow["u"'{pos=0.205}, two heads, from=1-3, to=3-2,]
	\arrow[two heads, from=1-4, to=1-5]
	\arrow[from=1-4, to=1-3]
    \arrow[two heads, from=2-2, to=2-3,crossing over]
	\arrow["{\prod_f(\alpha)}", dashed, from=1-5, to=2-3,crossing over]
	\arrow[dashed, from=1-4, to=2-2, crossing over]
 \arrow["{\prod_f(v)}", two heads, from=2-3, to=3-3,crossing over]
 \arrow["{\prod_f(u)}", curve={height=-18pt}, two heads, from=1-5, to=3-3,crossing over]
 \arrow["\lrcorner"{anchor=center, pos=0.125}, draw=none, from=2-2, to=3-3]
	\arrow["\lrcorner"{anchor=center, pos=0.125}, draw=none, from=1-4, to=2-4]
\end{tikzcd}\]
These assignments provide a right adjoint to the re-indexing $f^*$, \ie
\[ \cF_J^{\to}(k,\prod_f(v))\cong \cF_I^{\to}(f^*k,v) \]
because of the universal property of $\cF$-dependent products and the fact that $f^*$ acts as a pullback for codomain fibrations. Now we show that these right adjoints satisfy the BCC: let us consider the following pullback
	
	\[\begin{tikzcd}[column sep=large, row sep=large]
		Y & X \\
		I & J.
		\arrow["g", from=1-2, to=2-2]
		\arrow["i"', from=1-1, to=2-1]
		\arrow["f"', two heads, from=2-1, to=2-2]
		\arrow["h", two heads, from=1-1, to=1-2]
		\arrow["\scalebox{1.6}{$\lrcorner$}"{anchor=center, pos=0.1}, shift left=3, draw=none, from=1-1, to=2-1]
	\end{tikzcd}\]
 We have to show that $\prod_{h}i^*(v)\cong g^*\prod_f(v)$ for every $v\in \cF_I^{\to}$. Now let us consider the following diagram

\[\begin{tikzcd}[column sep=large, row sep=large]
	&&& {Z'} && {E'} \\
	&& Z && E \\
	& H && Y && X \\
	K && I && J
	\arrow[two heads, from=1-4, to=1-6]
	\arrow[from=1-4, to=2-3]
	\arrow["{{e'}}"', curve={height=30pt}, dashed, from=1-4, to=3-2]
	\arrow[two heads, from=1-4, to=3-4]
	\arrow["\lrcorner"{anchor=center, pos=0.125}, draw=none, from=1-4, to=3-6]
	\arrow[from=1-6, to=2-5]
	\arrow[""{name=0, anchor=center, inner sep=0}, "{{g^*\prod_f(v)}}", two heads, from=1-6, to=3-6]
	\arrow["{{i^*(v)}}"{pos=0.3}, two heads, from=3-2, to=3-4]
	\arrow[from=3-2, to=4-1]
	\arrow["\lrcorner"{anchor=center, pos=0.125}, draw=none, from=3-2, to=4-3]
	\arrow["h"{pos=0.3}, two heads, from=3-4, to=3-6]
	\arrow["i"', from=3-4, to=4-3]
	\arrow["\lrcorner"{anchor=center, pos=0.125}, draw=none, from=3-4, to=4-5]
	\arrow["g", from=3-6, to=4-5]
	\arrow["v"', two heads, from=4-1, to=4-3]
	\arrow["f"', two heads, from=4-3, to=4-5]
	\arrow["{{\text{(I)}}}"', draw=none, from=1-4, to=0]
 \arrow[crossing over, two heads, from=2-3, to=2-5]
	\arrow["e"', crossing over, curve={height=30pt}, from=2-3, to=4-1]
	\arrow[crossing over, two heads, from=2-3, to=4-3]
	\arrow["\lrcorner"{anchor=center, pos=0.125}, draw=none, from=2-3, to=4-5]
	\arrow[""{name=1, anchor=center, inner sep=0}, "{{\prod_f(v)}}"{pos=0.3}, crossing over, two heads, from=2-5, to=4-5]
 	\arrow["\lrcorner"{anchor=center, pos=0.125, rotate=-90}, draw=none, from=1-6, to=1]
\end{tikzcd} \]
First, notice that the square (I) is a pullback by construction, and that we can define the unique arrow $e':Z'\to H$  using the fact that the left square is a pullback.  Combining the universal property of pullbacks with that of $\cF$-dependent products, it is direct to check that

\[\begin{tikzcd}[column sep=large, row sep=large]
	H & {Z'} & {E'} \\
	& X & Y
	\arrow["{i^*(v)}"', two heads,from=1-1, to=2-2]
	\arrow["h"', two heads, from=2-2, to=2-3]
	\arrow["{g^*\prod_f(v)}", two heads, from=1-3, to=2-3]
	\arrow[two heads, from=1-2, to=2-2]
	\arrow["{e'}"', from=1-2, to=1-1]
	\arrow[two heads, from=1-2, to=1-3]
	\arrow["\lrcorner"{anchor=center, pos=0.125}, draw=none, from=1-2, to=2-3]
\end{tikzcd}\]
satisfies the universal property of $\cF$-dependent products, and hence we can conclude that $\prod_h i^*(v)\cong g^*\prod_f(v)$.
 \end{proof}

\section{Fibred (co)product completions}

\subsection{Coproduct completion}

In this section we present a proof-relevant generalisation of the \emph{generalised existential completion} introduced in \cite{maiettitrotta2023}. The crucial idea is that, given a display map category $\pair{\mB}{\cF}$ with $\cF$-dependent coproducts, where $\mB$ has all pullbacks, and a fibration $\fibration{\mE}{\mp}{\mB}$, we can \emph{freely} construct a new fibration denoted by $\fibration{\Sigma_{\cF}(\mE)}{\Sigma_{\cF}(\mp)}{\mB}$ having $\cF$-coproducts. We will call this construction the $\Sigma_{\cF}$\emph{-completion}. 

A particular case of this construction can be found in \cite[Sec. 3.2]{hofstra2011}, where the so-called \emph{family construction} is considered on the level of fibrations. It freely adds left adjoints to reindexing, along \emph{all} the morphisms of a fibration $\fibration{\mE}{\mp}{\mB}$ where $\mB$ is supposed to have finite limits.\\

For the rest of this section, let $\pair{\mB}{\cF}$ be a fixed display map category with $\cF$-dependent coproducts, where $\mB$ has all pullbacks, and let $\fibration{\mE}{\mp}{\mB}$ be a fixed fibration.

\noindent
\textbf{The $\Sigma_{\cF}$-completion.} The category $\Sigma_{\cF}(\mE)$ has:
\begin{itemize}
    \item as \textbf{objects} pairs $(g:X \twoheadrightarrow I, \alpha)$ where $\alpha$ is an object of the fibre $\mE_I$;
    \item as \textbf{morphisms} triples $(f_0,f_1,\phi) : (g:X \twoheadrightarrow I, \alpha)\to (h:Y \twoheadrightarrow J, \beta)$ where 
\[\begin{tikzcd}
	\alpha & \beta \\
	X & Y \\
	I & J
	\arrow["\phi", from=1-1, to=1-2]
	\arrow[Rightarrow, dotted, no head, from=1-1, to=2-1]
	\arrow[Rightarrow, dotted, no head, from=1-2, to=2-2]
	\arrow["{f_1}", from=2-1, to=2-2]
	\arrow["g"', two heads, from=2-1, to=3-1]
	\arrow["h", two heads, from=2-2, to=3-2]
	\arrow["{f_0}"', from=3-1, to=3-2]
\end{tikzcd}\]
is a commutative square in $\mB$ with a map $\phi:\alpha\to \beta$ in $\mE$ over $f_1$.
\end{itemize}
In this case, the idea is that an object $( g:X \twoheadrightarrow I, \alpha)$ is thought of as a predicate of the form $\exists i,x.\alpha(i,x)$

The functor $\fibration{\Sigma_{\cF}(\mE)}{\Sigma_{\cF}(\mp)}{\mB}$ is defined by the assignments
$\Sigma_{\cF}(\mp)( g:I \twoheadrightarrow X, \alpha):= I$ and $\Sigma_{\cF}(\mp)(f_0,f_1,\phi):=f_0$. It is straightforward to check that this functor defines a fibration.

Employing the assumption that the display map category $\pair{\mB}{\cF}$ has $\cF$-dependent coproducts, one can easily check that the fibration $\fibration{\Sigma_{\cF}(\mE)}{\Sigma_{\cF}(\mp)}{\mB}$ has $\cF$-coproducts defined as follows: for every arrow $k:I\to J$, the functor $\coprod_k: \Sigma_{\cF}(\mE)_I\to \Sigma_{\cF}(\mE)_J$ acts as $(g:X \twoheadrightarrow I, \alpha)\mapsto (kg:X \twoheadrightarrow J, \alpha)$ on the objects and as $(f_0,f_1,\phi)\mapsto (\id_J,f_1,\phi)$ on the vertical arrows.

As a construction, the functor $\fibration{\Sigma_{\cF}(\mE)}{\Sigma_{\cF}(\mp)}{\mB}$ can be obtained as follows:
\[\begin{tikzcd}[column sep=large, row sep=large]
	 \Sigma_{\cF}(\mE)  &\mE \\
	\cF^{\to} & \mB \\
	\mB
	\arrow[from=1-1, to=2-1]
	\arrow["\mathsf{cod}",from=2-1, to=3-1]
	\arrow["\mathsf{dom}"',from=2-1, to=2-2]
	\arrow[from=1-1, to=1-2]
	\arrow["\mp",from=1-2, to=2-2]
	\arrow["\Sigma_{\cF}(\mp)"',curve={height=28pt}, from=1-1, to=3-1]
 	\arrow["\lrcorner"{anchor=center, pos=0.125}, draw=none, from=1-1, to=2-2]
\end{tikzcd}\]
From this description it follows from general closure properties that $\Sigma_\cF(\mp)$ is, in fact, a fibration.
By the universal property of the pullback, this construction is easily seen
 to be 2-functorial in a suitable 2-categorical setting, where we consider the 2-category $\mathfrak{DispFib}$ defined as follows:
\begin{itemize}
    \item 0-cells are pairs $(\fibration{\mE}{\mp}{\mB},\pair{\mB}{\cF})$, where $\mp$ is a fibration and
    $\pair{\mB}{\cF}$ is a display map category with $\cF$-dependent coproducts;
    \item 1-cells are commutative diagrams
\[\begin{tikzcd}[column sep=large, row sep=large]
	\mE & \mE' \\
	\mB & \mB'
	\arrow["\mp"',from=1-1, to=2-1]
	\arrow["F"',from=2-1, to=2-2]
	\arrow["\mp'",from=1-2, to=2-2]
	\arrow["F_0",from=1-1, to=1-2]
\end{tikzcd}\]
where $F_0:\mE\to \mE'$ is a \emph{cartesian functor}, \ie~it sends $\mp$-cartesian maps to $\mp'$-cartesian maps, and $F:\mB\to \mB'$ is a functor \emph{preserving display maps}, \ie, $F(f)$ is an arrow in $ \cF'$ for every arrow $f$ of $ \cF$;
\item 2-cells are pairs $(\phi_0,\phi): (F_0,F)\to (G_0,G)$ of natural transformations $\phi_0:F_0\to G_0$ an $\phi:F\to G$ such that the component$(\phi_0)_X$ is sent by $\mp$ to $(\phi)_{\mp'(X)}$ for every object $X$ of $\mE$.
\end{itemize}

A direct generalisation of \cite[Thm.~3.11]{maiettitrotta2023} and \cite[Thm.~3.5]{hofstra2011} gives the following result:
\begin{theorem}\label{thm_coprod_monad}
    The assignment $(\fibration{\mE}{\mp}{\mB},\pair{\mB}{\cF})\mapsto (\fibration{\Sigma_{\cF}(\mE)}{\Sigma_{\cF}(\mp)}{\mB},\pair{\mB}{\cF})$ extends to a 2-monad on the 2-category $\mathfrak{DispFib}$. The 2-category of pseudo-algebras is 2-equivalent to the 2-full sub category of $\mathfrak{DispFib}$ whose objects are pairs $(\fibration{\mE}{\mp}{\mB},\pair{\mB}{\cF})$ where $\mp$ has $\cF$-coproducts and whose 1-cells are coproduct preserving 1-cells of $\mathfrak{DispFib}$.
\end{theorem}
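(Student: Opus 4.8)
The plan is to follow the template of \cite[Thm.~3.11]{maiettitrotta2023} and \cite[Thm.~3.5]{hofstra2011}, splitting the argument into two parts: first (A) exhibiting the monad data $(\Sigma_\cF,\eta,\mu)$ and verifying the (strict) $2$-monad axioms, and then (B) identifying its pseudo-algebras. The only genuinely new feature compared to the cited results is that the class of display maps $\cF$ now replaces the class of all maps of \cite{hofstra2011}, resp.\ the projections in \cite{maiettitrotta2023}, so the recurring task is to check that the two closure hypotheses on $\pair{\mB}{\cF}$ --- namely $\id \in \cF$ and closure under composition (i.e.\ having $\cF$-dependent coproducts) --- suffice to run the classical argument.

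For part (A), I would define the unit $\eta_\mp\colon \mp \to \Sigma_\cF(\mp)$ over $\id_{\pair{\mB}{\cF}}$ by sending $\alpha \in \mE_I$ to $(\id_I\colon I \tofib I,\alpha)$, which is where $\id_I \in \cF$ is used, and the multiplication $\mu_\mp\colon \Sigma_\cF(\Sigma_\cF(\mp)) \to \Sigma_\cF(\mp)$ by ``flattening'', sending $(h\colon Y \tofib J,\,(g\colon X \tofib Y,\alpha))$ to $(hg\colon X \tofib J,\alpha)$; here $hg \in \cF$ precisely because $\cF$ is closed under composition. The $2$-functoriality of $\Sigma_\cF$ itself comes for free from the pullback presentation $\Sigma_\cF(\mE) \cong \cF^{\to}\times_\mB \mE$ recorded above, together with the universal property of pullbacks: a $1$-cell $(F_0,F)$ induces a functor $\cF^{\to} \to {\cF'}^{\to}$ (using that $F$ preserves display maps) compatible with $F_0\colon \mE \to \mE'$ over $F$, hence a canonical comparison into ${\cF'}^{\to}\times_{\mB'}\mE'$, and likewise on $2$-cells. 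Since composition in $\mB$ is strictly associative and unital, the monad identities $\mu\circ\eta_{\Sigma_\cF} = \id = \mu\circ\Sigma_\cF\eta$ and $\mu\circ\Sigma_\cF\mu = \mu\circ\mu\Sigma_\cF$ hold on the nose, so $\Sigma_\cF$ is a strict $2$-monad.

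For part (B), the conceptual key is that $\Sigma_\cF$ is \emph{lax-idempotent} (Kock--Z\"oberlein), the fibrational counterpart of the lax-idempotency of the $\exists$-completion from \cite{trotta_ex_comp}. Concretely, given $\mp$ with $\cF$-coproducts I would define a pseudo-algebra structure $a\colon \Sigma_\cF(\mE)\to\mE$ by $(g\colon X \tofib I,\alpha)\mapsto \coprod_g\alpha$, with its action on morphisms and the coherence isomorphisms $a\,\eta_\mp\cong\id$ and $a\,\mu\cong a\,\Sigma_\cF(a)$ supplied by $\coprod_{\id}\cong\id$ and $\coprod_{hg}\cong\coprod_h\coprod_g$ --- these last isomorphisms being exactly what forces the algebras to be \emph{pseudo} rather than strict. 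Conversely, from a pseudo-algebra $(\mp,a)$ one recovers coproducts by setting $\coprod_v\beta := a(v,\beta)$ for $v\in\cF$, using $\eta$ and the algebra laws to produce both the adjunction $\coprod_v\dashv v^*$ and the Beck--Chevalley isomorphisms. Lax-idempotency then ensures that such a structure is essentially unique and that a $\mathfrak{DispFib}$-$1$-cell between algebras is a pseudo-morphism if and only if it preserves coproducts, while all $2$-cells between them are automatically algebra $2$-cells; this is what upgrades the object-level bijection to the claimed $2$-equivalence, matching the $2$-full subcategory in the statement.

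The step I expect to be the main obstacle is the careful bookkeeping of the Beck--Chevalley condition in both directions: showing that the morphism part of the structure map $a$ is well defined and natural (where the BCC squares for $\coprod$ are needed), and conversely reconstructing the BCC for the recovered coproducts from the naturality of a pseudo-algebra morphism. Establishing lax-idempotency cleanly --- ideally by exhibiting the adjunction $\Sigma_\cF\eta\dashv\mu$ --- is the part that makes the $1$-cell and $2$-cell halves of the equivalence fall out uniformly instead of being checked by hand, and I would invest most of the effort there.
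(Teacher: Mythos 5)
Your proposal is correct and follows exactly the route the paper intends: the paper gives no proof of this theorem at all, asserting it only as ``a direct generalisation'' of \cite[Thm.~3.11]{maiettitrotta2023} and \cite[Thm.~3.5]{hofstra2011}, and your plan (strict monad data with identity-based unit and composition-based flattening multiplication, lax-idempotency via $\Sigma_\cF\eta \dashv \mu$, pseudo-algebras recovered as fibrations with $\cF$-coproducts, with the BCC encoded by cartesianness of the structure map) is precisely that generalisation carried out. One detail where you are in fact more careful than the paper: your unit needs $\id_I \in \cF$, i.e.\ the display map category must \emph{have units}, a hypothesis the paper's definition of the $0$-cells of $\mathfrak{DispFib}$ omits (it only demands $\cF$-dependent coproducts) even though the paper itself silently assumes it later when it works with the unit of $\coprod_u \dashv u^*$ in $\Sigma_\cF(\mp)$.
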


Two relevant examples of fibrations arising as instances of the previous construction are the subobject and the codomain fibrations:
\begin{example}\label{ex_cod_fib_as_coprod_comp}
    Let us consider a category with finite limits $\mB$ and the class of display maps $\cF:=\mor (\mB)$ of all the morphisms of $\mB$ (in this case $\mor (\mB)^{\to}=\mB^{\to}$). Then the codomain fibration $\fibration{\mB^{\to}}{\mathsf{cod}}{\mB}$ can be easily proved to be an instance of the $\Sigma_{\mor (\mB)}$-completion, since it can be readily expressed via the pullback
\[\begin{tikzcd}[column sep=large, row sep=large]
	 \Sigma_{\mor (\mB)}(\mB)  &\mB \\
	\mB^{\to} & \mB \\
	\mB
	\arrow[from=1-1, to=2-1]
	\arrow["\mathsf{cod}",from=2-1, to=3-1]
	\arrow["\mathsf{dom}"',from=2-1, to=2-2]
	\arrow[from=1-1, to=1-2]
	\arrow["\mathsf{id}",from=1-2, to=2-2]
	\arrow["\Sigma_{\mor (\mB)}(\mathsf{id})"',curve={height=28pt}, from=1-1, to=3-1]
 	\arrow["\lrcorner"{anchor=center, pos=0.125}, draw=none, from=1-1, to=2-2]
\end{tikzcd}\]
of the identity fibration.
\end{example}

\begin{example}\label{ex_mono_fib_as_coprod_comp}
        Let us consider a category with finite limits $\mB$ and the class of display maps $\cF:=\mathsf{Mon}(\mB)$ of all the monomorphisms of $\mB$. In this case the category $\mathsf{Mon} (\mB)^{\to}$ is exactly that of subobjects $\mathsf{Sub}(\mB)$. Then the subobject fibration $\fibration{\mathsf{Sub}(\mB)}{\mathsf{sub}}{\mB}$ can be easily proved to be an instance of the $\Sigma_{\mathsf{Mon}(\mB)}$-completion,  since it can be trivially defined via the pullback
\[\begin{tikzcd}[column sep=large, row sep=large]
	 \Sigma_{\mathsf{Mon}(\mB)}(\mB)  &\mB \\
	\mathsf{Sub}(\mB) & \mB \\
	\mB
	\arrow[from=1-1, to=2-1]
	\arrow["\mathsf{cod}",from=2-1, to=3-1]
	\arrow["\mathsf{dom}"',from=2-1, to=2-2]
	\arrow[from=1-1, to=1-2]
	\arrow["\mathsf{id}",from=1-2, to=2-2]
	\arrow["\Sigma_{\mathsf{Mon}(\mB)}(\mathsf{id})"',curve={height=28pt}, from=1-1, to=3-1]
 	\arrow["\lrcorner"{anchor=center, pos=0.125}, draw=none, from=1-1, to=2-2]
\end{tikzcd}\]
of the identity fibration.
\end{example}

\subsection{Product completion}

We conclude this section by presenting the dual construction of the $\Sigma_{\cF}$-completion, namely the $\Pi_{\cF}$\emph{-completion}. Again, a particular case of this construction can be found in \cite[Sec. 3.2]{hofstra2011}. \\~\\

\noindent
\textbf{The $\Pi_{\cF}$-completion.} The category $\Pi_{\cF}(\mE)$ has:
\begin{itemize}
    \item as \textbf{objects} pairs $(g:I \twoheadrightarrow J, \alpha)$ where $\alpha$ is an object of the fibre $\mE_I$;
    \item as \textbf{morphisms} triples $(f_0,f_1,\phi) : (g:X \twoheadrightarrow Y, \beta)\to (h:I \twoheadrightarrow J, \alpha)$ where 
\[\begin{tikzcd}
	\alpha & {f_1^*\alpha} && {f_0'^*\beta} & \beta \\
	X && {I \times_J Y} && Y \\
	&& I && J
	\arrow[Rightarrow, dashed, no head, from=1-1, to=2-1]
	\arrow[from=1-2, to=1-1, cart]
	\arrow["\phi", from=1-2, to=1-4, squiggly]
	\arrow[Rightarrow, dashed, no head, from=1-2, to=2-3]
	\arrow[from=1-4, to=1-5, cart]
	\arrow[Rightarrow, dashed, no head, from=1-4, to=2-3]
	\arrow[Rightarrow, dashed, no head, from=1-5, to=2-5]
	\arrow["g"', two heads, from=2-1, to=3-3]
	\arrow["{f_1}"', from=2-3, to=2-1]
	\arrow["{f_0'}", from=2-3, to=2-5]
	\arrow[two heads, from=2-3, to=3-3]
	\arrow["\lrcorner"{anchor=center, pos=0.125}, draw=none, from=2-3, to=3-5]
	\arrow["h", two heads, from=2-5, to=3-5]
	\arrow["{f_0}"', from=3-3, to=3-5]
\end{tikzcd}\]
is a diagram in $\mB$ and $\phi:\alpha\rightsquigarrow \beta$ is a vertical arrow in $\mE$ over $I \times_J Y$.
\end{itemize}
In this case, the idea is that an object $( g:X\twoheadrightarrow I, \alpha)$ is thought of as a predicate of the form $\forall i,x.\alpha(i,x)$.

The functor $\fibration{\Pi_{\cF}(\mE)}{\Pi_{\cF}(\mp)}{\mB}$ is defined by the assignments
$\Pi_{\cF}(\mp)( g:X \twoheadrightarrow I, \alpha):= I$ and $\Pi_{\cF}(\mp)(f_0,f_1,\phi):=f_0$. It is straightforward to check that this functor defines a fibration.

Again, employing the assumption that the display map category $\pair{\mB}{\cF}$ has $\cF$-dependent products, one can check that the fibration $\fibration{\Pi_{\cF}(\mE)}{\Pi_{\cF}(\mp)}{\mB}$ has $\cF$-products.

The following result presents the ``dual theorem'' of \Cref{thm_coprod_monad}:
\begin{theorem}
    The assignment $(\fibration{\mE}{\mp}{\mB},\pair{\mB}{\cF})\mapsto (\fibration{\Pi_{\cF}(\mE)}{\Pi_{\cF}(\mp)}{\mB},\pair{\mB}{\cF})$ extends to a 2-monad on the 2-category $\mathfrak{DispFib}$. The 2-category of pseudo-algebras is 2-equivalent to the 2-full sub category of $\mathfrak{DispFib}$ whose objects are pairs $(\fibration{\mE}{\mp}{\mB},\pair{\mB}{\cF}$ where $\mp$ has $\cF$-products and whose 1-cells are product preserving 1-cells of $\mathfrak{DispFib}$.
\end{theorem}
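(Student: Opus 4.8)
The plan is to deduce this ``dual'' statement from \Cref{thm_coprod_monad} by means of the fibrewise opposite construction of \Cref{def_op_fibration}, rather than repeating the monad bookkeeping from scratch. The guiding observation is that the fibrewise opposite $(-)^{(\op)}$ exchanges left and right adjoints to reindexing: if $\mp$ has $\cF$-coproducts, witnessed by adjunctions $\coprod_u\dashv u^*$, then passing to the opposite fibres turns these into adjunctions $(u^*)^{(\op)}\dashv(\coprod_u)^{(\op)}$, so that $\mp^{(\op)}$ has $\cF$-products, and symmetrically. Since the opposite construction fixes both the base $\mB$ and the class $\cF$, it keeps us inside $\mathfrak{DispFib}$. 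Concretely, I would first establish the natural isomorphism of fibrations
\[ \Pi_{\cF}(\mp)\;\cong\;\bigl(\Sigma_{\cF}(\mp^{(\op)})\bigr)^{(\op)}, \]
by matching the explicit descriptions of the two total categories: an object $(g\colon I\tofib J,\alpha)$ of $\Pi_{\cF}(\mE)$ corresponds to the object $(g,\alpha)$ of $\Sigma_{\cF}(\mE^{(\op)})$, while the reversed, pullback-mediated vertical maps $\phi$ appearing in the morphisms of $\Pi_{\cF}(\mE)$ are precisely the images, under the two applications of the fibrewise opposite, of the covariant vertical maps occurring in the morphisms of $\Sigma_{\cF}$.

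Granting this identification, I would next record that $(-)^{(\op)}$ is an involutive $2$-endofunctor on $\mathfrak{DispFib}$, i.e.\ $((-)^{(\op)})^2\cong\mathrm{id}$, acting as the identity on base data and dualising the fibrewise data of $1$- and $2$-cells. The $2$-monad structure on $\Pi_{\cF}$ is then obtained by conjugation: its unit and multiplication are the whiskerings of $\eta^{\Sigma}$ and $\mu^{\Sigma}$ by $(-)^{(\op)}$ on both sides, and the monad laws follow from those of $\Sigma_{\cF}$ (\Cref{thm_coprod_monad}) together with the $2$-functoriality and involutivity of the opposite. Because the opposite is applied an even number of times in the conjugate $(-)^{(\op)}\circ\Sigma_{\cF}\circ(-)^{(\op)}$, the variance on $2$-cells is restored, so that $\Pi_{\cF}$ is genuinely a $2$-monad (and not a comonad).

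For the algebra characterisation, I would use that conjugation by an (involutive) $2$-equivalence sends pseudo-algebras to pseudo-algebras: a pseudo-$\Pi_{\cF}$-algebra on $\mp$ is the same datum as a pseudo-$\Sigma_{\cF}$-algebra on $\mp^{(\op)}$. By \Cref{thm_coprod_monad} the latter are exactly the fibrations with $\cF$-coproducts, with coproduct-preserving $1$-cells. Dualising back along the adjoint-swapping observation above, ``$\mp^{(\op)}$ has $\cF$-coproducts'' translates into ``$\mp$ has $\cF$-products'', and a coproduct-preserving $1$-cell for $\Sigma_{\cF}$ becomes a product-preserving $1$-cell for $\Pi_{\cF}$; one also checks that the left Beck--Chevalley isomorphisms of \Cref{def:fib-compl} correspond to the right ones under the opposite. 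This yields the asserted $2$-equivalence between pseudo-algebras and the indicated $2$-full subcategory.

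The step I expect to be the main obstacle is the careful variance bookkeeping for the fibrewise opposite on $2$-cells of $\mathfrak{DispFib}$: the opposite reverses the vertical component $\phi_0$ of a $2$-cell while leaving the base component $\phi$ untouched, so one must verify that this mixed-variance dualisation is nonetheless involutive and interacts correctly with the conjugation, and in particular that the explicit morphism data of $\Pi_{\cF}(\mE)$---with its pullback $I\times_J Y$ and its reversed vertical map---really is what the two opposites produce. A purely direct alternative, mirroring step by step the construction of $\eta^{\Sigma}$, $\mu^{\Sigma}$ and the algebra equivalence underlying \Cref{thm_coprod_monad}, is available and avoids these subtleties at the cost of repeating the computations.
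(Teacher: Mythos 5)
Your overall route coincides with the paper's own (implicit) one: the paper states this result \emph{without proof}, as the ``dual theorem'' of \Cref{thm_coprod_monad}, and the identification $\Pi_{\cF}(\mp)\cong\bigl(\Sigma_{\cF}(\mp^{(\mathrm{op})})\bigr)^{(\mathrm{op})}$ that you take as your first step is recorded immediately afterwards (also without proof) as \Cref{prop_product_via_coproduct_and_op}, with Hofstra's Prop.~3.11 cited for the simple case. So there is no difference of approach to report; the question is whether your execution closes the argument, and there it has a genuine gap.

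The gap is exactly the point you set aside as ``variance bookkeeping'': the fibrewise opposite is \emph{not} an involutive 2-endofunctor on $\mathfrak{DispFib}$, because a general 2-cell does not dualise at all. A 2-cell $(\phi_0,\phi)\colon(F_0,F)\to(G_0,G)$ has components $(\phi_0)_X$ lying over $\phi_{\mp X}$, which amount to vertical arrows $F_0X\rightsquigarrow\phi_{\mp X}^{*}G_0X$; by the description of $(\mE')^{(\mathrm{op})}$ via classes $[f_1,f_2]$, a 2-cell $(F_0^{(\mathrm{op})},F)\to(G_0^{(\mathrm{op})},G)$ would instead require vertical arrows $\phi_{\mp X}^{*}G_0X\rightsquigarrow F_0X$. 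Neither this vertical datum nor the base component $\phi\colon F\to G$ can be reversed, so no candidate image 2-cell exists in either direction unless the components of $\phi_0$ are cartesian (vertical parts invertible) or $\phi$ is an identity --- the fixed-base, vertical-2-cell setting of Hofstra, where the conjugation argument does go through. Moreover, the same obstruction blocks the ``purely direct alternative'' you offer as a fallback: the action of $\Pi_\cF$ on such a 2-cell would require a map $FI\times_{GI}GX\to FX$ over $FI$, whereas naturality of $\phi$ only supplies the gap map $FX\to FI\times_{GI}GX$, i.e.\ the wrong direction (note that $\Sigma_\cF$ does not suffer from this, which is why \Cref{thm_coprod_monad} is unproblematic on all 2-cells). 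To complete the proof you must therefore restrict the 2-cells on which the dualisation --- and hence the 2-monad $\Pi_\cF$ --- is defined, for instance to 2-cells whose base component is an identity, or to those whose base naturality squares are pullbacks; strictly speaking the theorem's statement needs the same qualification, a point the paper, offering no proof, leaves implicit.
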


Comparing the $\Sigma_{\cF}$- with the $\Pi_{\cF}$-completion reveals a kind of symmetry: as in the case of the simple coproduct and product completions (see \cite[Prop. 3.11]{hofstra2011}), the $\Pi_{\cF}$-completion can formally be obtained by combining the $\Sigma_{\cF}$-completion with the fibrewise opposite (see \Cref{def_op_fibration}). In detail, we have the following correspondence:
\begin{proposition}\label{prop_product_via_coproduct_and_op}
    There is an isomorphism of fibrations    $\Pi_{\cF}(\mp)\cong (\Sigma_{\cF}(\mp^{(\mathrm{op})}))^{(\mathrm{op})}$, and this is natural in $\mp$.
\end{proposition}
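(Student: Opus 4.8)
The plan is to exhibit the isomorphism explicitly as the identity on the base $\mB$ and on objects, and then to match the morphism data fibrewise by unwinding the two nested fibrewise opposites together with the $\Sigma_{\cF}$-reindexing. This mirrors the argument for the simple case in \cite[Prop. 3.11]{hofstra2011}.

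First I would record the effect of the fibrewise opposite on hom-sets. By \Cref{def_op_fibration}, a morphism $X \to Y$ in $\mE^{(\op)}$ over $u \colon \mp X \to \mp Y$ is a class $[f_1,f_2]$ with $f_1$ cartesian and $f_2$ vertical sharing a domain; since $f_1$ must be the cartesian lifting $\ovln{u}Y \colon u^*(Y) \to Y$, such a class is equivalently a single vertical arrow $u^*(Y) \rightsquigarrow X$ in $\mE$. Dually, $\mp^{(\op)}$ has the same objects over each base object as $\mp$, with fibres the opposites $\mE_A^{\op}$, and crucially its cartesian liftings coincide with those of $\mp$ (only vertical maps are reversed), so cartesian reindexing of \emph{objects} is unchanged.

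Next I would unwind the two completions. The objects of $\Sigma_{\cF}(\mp^{(\op)})$ are again pairs $(g \colon X \twoheadrightarrow I, \alpha)$ with $\alpha$ over $X$, so all three fibrations have the same object class. A morphism $(g,\alpha) \to (h,\beta)$ in $\Sigma_{\cF}(\mp^{(\op)})$ over $f_0$ is a commuting square $(f_0,f_1)$ together with a map $\alpha \to \beta$ over $f_1$ in $\mE^{(\op)}$, i.e.\ a vertical $f_1^*\beta \rightsquigarrow \alpha$ in $\mE$. Taking the opposite once more, a morphism $(g,\alpha) \to (h,\beta)$ in $(\Sigma_{\cF}(\mp^{(\op)}))^{(\op)}$ over $f_0 \colon I \to J$ is a vertical $\Sigma_{\cF}(\mp^{(\op)})$-arrow from the reindexing $f_0^*(h,\beta)$ into $(g,\alpha)$. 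Since reindexing in $\Sigma_{\cF}$ along $f_0$ pulls $h$ back to $\pi \colon I \times_J Y \twoheadrightarrow I$ and cartesianly transports $\beta$ to $(f_0')^*\beta$ (unaffected by the fibrewise op, by the previous paragraph), such an arrow amounts to a map $f_1 \colon I \times_J Y \to X$ over $I$ together with a vertical $\phi \colon f_1^*\alpha \rightsquigarrow (f_0')^*\beta$ in $\mE$. This is precisely the data specifying a $\Pi_{\cF}(\mp)$-morphism $(g,\alpha) \to (h,\beta)$ in the defining diagram, so the identity-on-objects assignment is a bijection on morphisms lying over each $f_0$, manifestly commuting with the projections to $\mB$.

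The only genuinely delicate point, which I expect to be the main obstacle, is functoriality of this bijection: composition in $(\Sigma_{\cF}(\mp^{(\op)}))^{(\op)}$ is computed by the opposite-fibration composite $[g_1(\ovln{\mp f_1}B),f_2x]$ of \Cref{def_op_fibration} applied to $\Sigma_{\cF}(\mp^{(\op)})$, and one must verify, by pasting pullback squares and invoking the coherence (pseudofunctoriality) of cartesian liftings, that it agrees with the composition of $\Pi_{\cF}$-morphisms. Once this is checked one obtains a fibred isomorphism over $\mB$. Naturality in $\mp$ then follows because both $\Sigma_{\cF}(-)$ (by \Cref{thm_coprod_monad}) and the fibrewise opposite $(-)^{(\op)}$ are $2$-functorial, and the isomorphism was defined uniformly—identity on objects, transport of the single vertical datum—so its components commute with the action of a general cartesian $1$-cell $(F_0,F)$; I would conclude by verifying this compatibility componentwise.
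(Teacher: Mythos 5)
Your proposal is correct and is exactly the argument the paper intends: the paper itself states this proposition \emph{without proof}, only noting in the preceding sentence that it parallels Hofstra's simple-case result \cite[Prop.~3.11]{hofstra2011}, and your identity-on-objects unwinding — reindexing of objects is unchanged by the fibrewise opposite, so that a morphism $(g,\alpha)\to(h,\beta)$ over $f_0$ on either side comes down to a map $f_1\colon I\times_J Y\to X$ over $I$ together with a vertical $\phi\colon f_1^*\alpha \rightsquigarrow (f_0')^*\beta$ — is precisely that argument carried out in the dependent setting, with the hom-set correspondence and directions matching the defining diagram of $\Pi_{\cF}(\mp)$. The two checks you defer (compatibility with the opposite-fibration composition formula via pullback pasting, and naturality in $\mp$ via $2$-functoriality of $\Sigma_{\cF}$ and $(-)^{(\mathrm{op})}$) are routine and raise no obstacle, so the sketch is sound and in fact more detailed than what the paper provides.
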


\begin{example}
Let $\mB$ be a category with finite limits. Then, 
combining \Cref{prop_product_via_coproduct_and_op} with \Cref{ex_cod_fib_as_coprod_comp}, we have that the opposite of the codomain fibration $\fibration{\mB^{\to}}{\mathsf{cod}^{(\mathrm{op})}}{\mB}$ on $\mB$ is an instance of the $\Pi_{\mathsf{Mor}(\mB)}$-completion, \ie~$\mathsf{cod}^{(\mathrm{op})}\cong \Pi_{\mathsf{Mor}(\mB)}(\id_{\mB})$. 
\end{example}
\begin{example}
Let $\mB$ be a category with finite limits. Then, combining \Cref{prop_product_via_coproduct_and_op} with \Cref{ex_mono_fib_as_coprod_comp}, we have that the opposite of the monos-fibration $\fibration{\mathsf{Sub}(\mB)}{\mathsf{sub}^{(\mathrm{op})}}{\mB}$ is an instance of the $\Pi_{\mathsf{Mon}(\mB)}$-completion, \ie, $\mathsf{sub}^{(\mathrm{op})}\cong \Pi_{\mathsf{Mon}(\mB)}(\id_{\mB})$.
\end{example}
Taking advantage of the original intuition of Hosftra, who proved in~\cite{hofstra2011} that the Dialectica construction can be decomposed in terms of simple coproducts and simple products completions, we combine the $\Sigma_{\cF}$-completion with the $\Pi_{\cF}$-completion generalising the ordinary presentation of the so-called \emph{Dialectica fibration} $\dial{\mp}$ to the dependent case:

\begin{definition}[$\cF$-Dialectica fibration]\label{def_dependent_dialectica}
    Let $\pair{\mB}{\cF}$ be a display map category  with $\cF$-dependent coproducts and  let $\fibration{\mE}{\mp}{\mB}$ be a fibration. We define the $\cF$-\textbf{Dialectica fibration} as the fibration $\depdial{\mp}{\cF}:=\Sigma_{\cF}\Pi_{\cF}(\mp)$.
\end{definition}
The ordinary notion of Dialectica fibration can be then obtained as a particular instance of \Cref{def_dependent_dialectica} by taking $\cF$ to be the class of (cartesian) product projections. 
\begin{remark}
 Notice that when we consider a well-rooted display map category $\pair{\mB}{\cF}$ (see \Cref{def_well_rooted_disp_map_cat}) with $\cF$-dependent coproducts, then the fibre over $1$ of the $\cF$-Dialectica  fibration associated with the monos-fibration over $\mB$ provides the original notion of \emph{dependent Dialectica category} as introduced in \cite{dePaiva1989dialectica,depaiva1991dialectica}.

\end{remark}

\begin{remark}

    By \Cref{ex_mono_fib_as_coprod_comp}, we know that if $\mB$ is a category with finite limits, then the monos-fibration $\fibration{\mathsf{Sub}(\mB)}{\mathsf{sub}}{\mB}$ is an instance of the $\Sigma_{\mathsf{Mon}(\mB)}$-completion, namely $\mathsf{sub}\cong \Sigma_{\mathsf{Mon}(\mB)}(\id_{\mB})$. Therefore, we have that any $\cF$-Dialectica fibration $\depdial{\mathsf{sub}}{\cF}$ associated with the monos-fibration (assuming $\pair{\mB}{\cF}$ to be a display map category  with $\cF$-dependent coproducts), can easily presented as combinations of corpoducts and products completions of the identity fibration, \ie, $\depdial{\mathsf{sub}}{\cF}\cong \Sigma_{\cF}\Pi_{\cF}\Sigma_{\mathsf{Mon}(\mB)}(\id_{\mB})$. Similarly, by \Cref{ex_cod_fib_as_coprod_comp}, we have that any $\cF$-Dialectica fibration $\depdial{\mathsf{cod}}{\cF}$  associated with the codomain fibration $\fibration{\mB^{\to}}{\mathsf{cod}}{\mB}$  can be presented as  $\depdial{\mathsf{cod}}{\cF}\cong \Sigma_{\cF}\Pi_{\cF}\Sigma_{\mathsf{Mor}(\mB)}(\id_{\mB})$.
\end{remark}

\begin{example}[Polynomial functors]
Let $\pair{\mB}{\cF}$ be a display map category where $\mB$ is locally cartesian closed. One recovers the \emph{category $\mathbf{Poly}_{\cF}(\mB)$ of $\cF$-polynomial functors $\mB \to \mB$} as the fibre over $1 \in \mB$ of the fibration $\mathfrak{Dial}_\cF(\fibration{\cF}{\mathsf{cod}}{\mB})$. This plays a crucial role in the construction of Dialectica models of type theory in the work of von Glehn and Moss~\cite{vonGlehnPhD,moss2018,mossvonglehn2018}. The objects of $\mathbf{Poly}_{\cF}(\mB)$ are display maps $B \fibarr A \in \cF$, written type-theoretically as $\sum_{a:A}B(a) \fibarr A$. Such a map corresponds to the polynomial functor $\mB \to \mB, X \mapsto \sum_{a:A} X^{B(a)}$~\cite[Section~4.1]{vonGlehnPhD}. More generally, one can understand $\mathfrak{Dial}_\cF(\mp) \cong \Sigma_\cF \Pi_\cF(\mp)$ as the \emph{fibration of $\cF$-polynomials internal to $\mp$}, for a general fibration $\mp$~\cite{vonGlehnPhD,moss2022:polyTalk}. 
\end{example}

\section{Dependent Skolem fibrations}

In this section, we introduce the notion of dependent Skolem fibration, validating a principle analogous to \emph{Skolemisation}
\[ \forall x \exists y \phi(x,y) \cong \exists f \forall x \phi(x,fx).\]
The axioms for a Skolem fibration (to occur again later when introducing dependent Gödel fibrations) rely on the important notion of \emph{quantifier-free objects} that we study first.

This generalises the previous developments in~\cite{trotta_et_al:LIPIcs.MFCS.2021.87} from the simple to the dependent case. Besides, as a special class of Skolem fibrations we newly introduce \emph{Hilbert fibrations}, admitting operations analogous to Hilbert's \emph{$\epsilon$- and $\tau$-operators} from proof theory~\cite{hilbert1922,hilbert1923,ackermann1925}.

\subsection{Quantifier-free objects}
A first categorical description of the logical notion of \emph{existential-free objects} has been presented in the proof-irrelevant setting of Lawvere doctrines in the recent work \cite{maiettitrotta2023} by M.E.~Maietti and D.~Trotta and in the works \cite{Frey2020,Frey2014AFS} by J.~Frey.
In such a setting, and for a given class of display maps, the authors identify a universal property that an element of a doctrine has to satisfy in order to be considered ``free from existential quantifiers along display maps.''

Here we provide a proof-relevant generalisation of these notions. Since this further step of generality could make the reader loses the intuition behind the categorical definitions, we start by presenting a simple example in a non-fibrational setting that properly represents the picture we want to abstract.

Let us consider a locally small category $\mB$ with (set-indexed) sums. Recall, for example from \cite[Lem. 42]{CARBONI199879}, that an object $X$ of $\mB$ is said to be \emph{indecomposable}\footnote{The notion is originally due to Bunge~\cite{bungePhD} who called them \emph{abstractly exclusively unary objects}.} if its covariant hom-functor preserves sums, \ie, if the functor $\mB(X,-): \mB\to \set$ satisfies $\mB(X,\coprod_{i\in I}Y_i)\cong \coprod_{i\in I}\mB(X,Y_i)$. Notice that this property of $X$ can be presented in the following equivalent way: $X$ is indecomposable if and only if for every arrow $\arrow{X}{h}{\coprod_{i\in I}Y_i}$ there exist a unique element $\overline{i}\in  I$ (\ie, a function $\arrow{1}{\overline{i}}{I}$) and a unique arrow $\arrow{X}{\overline{h}}{Y_{\overline{i}}}$ such that the following diagram commutes
\[\begin{tikzcd}
	X && {\coprod_{i\in I}Y_i} \\
	& {Y_{\overline{i}}}
	\arrow["h", from=1-1, to=1-3]
	\arrow["{\overline{h}}"', from=1-1, to=2-2]
	\arrow["{\iota_{\overline{i}}}"', from=2-2, to=1-3]
\end{tikzcd}\]
where $\arrow{Y_{\overline{i}}}{\iota_{\overline{i}}}{\coprod_{i \in I}Y_i}$ is the canonical ``injection'' of the coproduct. 

In the following definition we generalise this notion of ``indecomposable object'' in the fibrational setting.

We fix a display map category $\pair{\mB}{\cF}$ closed under $\cF$-coproducts.

\begin{definition}[dependent $(\cF,\coprod)$-quantifier splitting objects]\label{def:gen-coprod-quant-splitting}
Let $\fibration{\mE}{\mP}{\mB}$ be a fibration with all $\cF$-coproducts. For $A\in \mB$, an object $\alpha \in \mE_A$ in the fibre is called \textbf{(dependent) $(\cF,\coprod)$-quantifier splitting} in case the following universal property holds:
given an object $\beta \in \mE_B$, in the fibre of some $B \in \mB$, together with a vertical map
\[h: \alpha \vertarr \coprod_u \beta\]
in $\mE_A$, where $u:B \tofib A$ is an arrow of $\cF$,
there uniquely exists the following:
\begin{itemize}
    \item a section
\[\begin{tikzcd}
	B && A
	\arrow["u"{description}, two heads, from=1-1, to=1-3]
	\arrow["g"{description}, curve={height=-12pt}, dotted, from=1-3, to=1-1]
\end{tikzcd}\]
of $u$ (\ie, a right inverse, not necessarily in $\cF$ itself);
    \item together with a vertical arrow $\overline{h}:\alpha \vertarr g^*\beta$ in $\mE_A$ such that the vertical arrow $h$ decomposes as
\begin{align}\label{eq:gen-coprod-quant-free}
    h = g^*\eta_\beta \circ \overline{h} 
\end{align}  
where $\eta : \id_{\mE_B} \Rightarrow u^* \coprod_u$ is the unit of the adjunction $\coprod_u \dashv u^*: \mE_A \to \mE_B$.
\end{itemize}
 Condition~\eqref{eq:gen-coprod-quant-free} means that the following diagram of vertical arrows in $\mE_A$ commutes (up to composition with the chosen isomorphism $g^*u^* \cong \id_{\mE_A}$, as indicated):
\[\begin{tikzcd}
	{\alpha} &&&& {g^*(u^*\coprod_u\beta) \mathrlap{\cong \coprod_u\beta}} \\
	&& {g^*\beta}
	\arrow["{g^*\eta_\beta}"{description}, squiggly, from=2-3, to=1-5]
	\arrow["h"{description}, squiggly, from=1-1, to=1-5]
	\arrow["{\overline{h}}"{description}, dashed, squiggly, from=1-1, to=2-3]
\end{tikzcd}\]
\end{definition}

\begin{remark}
    Notice that if $\alpha \in \mE_A$ is $(\cF,\coprod)$-quantifier splitting then we have that for every arrow $u:B \tofib A$ of $\cF$ having a section $g$ that the post-composition 
    \[g^*\eta_{\beta}\circ (-):\mE_A(\alpha,g^*\beta) \to \mE_A(\alpha,\coprod_u\beta)\]
    is monic for every $\beta \in \mE_B$.
\end{remark}
\begin{example}\label{ex:existential splitting family fibration}
Let us consider the family fibration $\fibration{\Fam (\mB)}{\mp}{\set}$, and let $\cF$ be the class of all the morphisms of $\set$. Recall from  \cite[Lem. 1.9.5]{Jacobs1999} that the family fibrations has coproducts (along all the arrows of $\set$) if and only if $\mB$ has set-indexed coproducts. In this setting the $(\set,\coprod)$-quantifier splitting elements are precisely those families $(X_i)_{i\in I}$ where every object $X_i$ is indecomposable in $\mB$.
We show this for the case $I$ is the terminal set $1=\{\bullet\}$, but the following argument can be easily generalised to an arbitrary set.

Let us consider an object $X$ of $\mB$, \ie, an object $(X_{\bullet})_{\bullet\in 1}$ of the fibre $\Fam (\mB)_{1}$, and let us consider another object of $\Fam (\mB)_{1}$ given by $\coprod_{i\in I}Y_i$ in $\mB$ where $\arrow{I}{!_I}{1}$ is the ``terminal function'' and $\coprod_{!_I}: {\Fam (\mB)_I}\to {\Fam (\mB)_1}$ is the right adjoint to the reindexing ${!_I^*}:{\Fam (\mB)_1}\to{\Fam (\mB)_I}$. By \Cref{def:gen-coprod-quant-splitting}, we have that $(X_{\bullet})_{\bullet \in I}$ is $(\cF,\coprod)$-quantifier splitting if and only if for every vertical arrow $(X_{\bullet})_{\bullet \in 1}\xrightarrow{h} \coprod_{!_I}(Y_i)_{i\in I}$ there exists a unique section $g: 1\to I$ of $!_I$ and a unique vertical arrow $\overline{h}: (X_{\bullet})_{\bullet\in 1} \to g^*(Y_i)_{i\in I}$ such that the diagram
\[\begin{tikzcd}
	(X_{\bullet})_{\bullet\in 1} && \coprod_{!_I}(Y_i)_{i\in I} \\
	& g^*(Y_{i})_{i\in I}
	\arrow["h", from=1-1, to=1-3]
	\arrow["{\overline{h}}"', from=1-1, to=2-2]
	\arrow["{\eta_{(Y_i)_{i\in I}}}"', from=2-2, to=1-3]
\end{tikzcd}\]
commutes, where $\eta:\id\to !_I^*\coprod_{!_I}$ is the unit of the adjunction $\coprod_{!_I}\dashv \; !_I^*$. Since the coproducts of the family fibration are given precisely by the coproducts of the category $\mB$ we can conclude that $(X_{\bullet})_{\bullet\in 1}$ is $(\cF,\coprod)$-quantifier splitting if and only if $X$ is indecomposable in $\mB$.

\end{example}
Notice that in general the property of being $(\cF,\coprod)$-quantifier-splitting is not stable under reindexings, \ie, if $\alpha\in \mE_A$ is $(\cF,\coprod)$-quantifier-splitting, the object $f^*\alpha$ is not  $(\cF,\coprod)$-quantifier-splitting in general. 

However, from a purely logical perspective where $(\cF,\coprod)$-quantifier-splittings aim to represent existential-free formulas, it is quite natural  to require this further condition of being ``stable under substitution.''

Hence, quantifier-splittings that are stable under reindexing are called \emph{quantifier-free elements}, according to the following definition:
\begin{definition}[dependent $(\cF,\coprod)$-quantifier-free objects]\label{def:gen-coprod-quant-free}
    Let $\fibration{\mE}{\mP}{\mB}$ be a fibration with all $\cF$-coproducts. For $A \in \mB$, an object $\alpha \in \mE_A$ in the fibre is called \textbf{(dependent) $(\cF,\coprod)$-quantifier-free} if for every arrow $f:{I}\to{A}$ in $\mB$, the reindexing $f^*\alpha$ is an $(\cF,\coprod)$-quantifier splitting.
\end{definition}

\begin{example}
Let us consider the family fibration $\fibration{\Fam (\mB)}{\mp}{\set}$, where $\mB$ is a category with set-indexed coproducts, and let $\cF$ be the class of all the morphisms of $\set$. In \Cref{ex:existential splitting family fibration} we show that an element $(X_i)_{i \in I}$ of the fibre $\Fam (\mB)_I$ is $(\cF,\coprod)$-quantifier-splitting if and only if every object $X_i$ is indecomposable in $\mB$. Since the action of the reindexing of the family fibrations does not change the objects of a family $(X_i)_{i \in I}$ but just the set-indexes, we have that if $f:J\to I$ is a function and $(X_i)_{i \in I}$ is an object of $\Fam (\mB)_I$ such that every $X_i$ is indecomposable, then every object of $f^*(X_i)_{i \in I}$ is indecomposable. Therefore, we have that every  $\Fam (\mB)_I$ is $(\cF,\coprod)$-quantifier-splitting is  $\Fam (\mB)_I$ is $(\cF,\coprod)$-quantifier-free.
\end{example}
\begin{definition}[enough $(\cF,\coprod)$-quantifier-free objects]
Let $\pair{\mB}{\cF}$ be a display map category. A fibration $\fibration{\mE}{\mP}{\mB}$ is said to have \textbf{enough $(\cF,\coprod)$-quantifier-free objects} if it has all $\cF$-coproducts and the following property holds: for all $I \in \mB$ and $\alpha\in \mE_I$ there exists some object $A \in \mB$, an arrow $f:A \twoheadrightarrow I$ in $\cF$, and $\beta \in \mE_A$ $(\cF,\coprod)$-quantifier-free object such that $\alpha \cong \coprod_f(\beta)$ .
\end{definition}
We conclude this section by presenting the dual of the previous notions for the case of dependent products:

\begin{definition}[dependent $(\cF,\prod)$-quantifier splitting objects]\label{def:gen-prod-quant-splitting}
Let $\fibration{\mE}{\mP}{\mB}$ be a fibration with all $\cF$-products. For $A\in \mB$, an object $\alpha \in \mE_A$ in the fibre is called \textbf{(dependent) $(\cF,\prod)$-quantifier splitting} in case the following universal property holds:
given an object $\beta \in \mE_B$, in the fibre of some $B \in \mB$, together with a vertical map
\[h: \prod_u \beta \vertarr \alpha\]
in $\mE_A$, where $u:B \tofib A$ is an arrow of $\cF$,
there uniquely exists the following:
\begin{itemize}
    \item a section
\[\begin{tikzcd}
	B && A
	\arrow["u"{description}, two heads, from=1-1, to=1-3]
	\arrow["g"{description}, curve={height=-12pt}, dotted, from=1-3, to=1-1]
\end{tikzcd}\]
of $u$ (\ie, a right inverse, not necessarily in $\cF$ itself);
    \item together with a vertical arrow $\overline{h}:g^*\beta\vertarr \alpha$ in $\mE_A$ such that the vertical arrow $h$ decomposes as
\begin{align}\label{eq:gen-prod-quant-free}
    h =   \overline{h} \circ g^*\varepsilon_\beta
\end{align}  
where $\varepsilon :  u^* \prod_u\Rightarrow \id_{\mE_B}  $ is the counit of the adjunction $u^*\dashv \prod_u : \mE_A \to \mE_B$.
\end{itemize}
 Condition~\eqref{eq:gen-prod-quant-free} means that the following diagram of vertical arrows in $\mE_A$ commutes:
\[\begin{tikzcd}
	g^*(u^*\prod_u\beta) \cong \prod_u\beta &&&& {\alpha}\\
	&& {g^*\beta}
	\arrow["{\overline{h}}"{description}, squiggly, from=2-3, to=1-5]
	\arrow["h"{description}, squiggly, from=1-1, to=1-5]
	\arrow["{g^*\varepsilon_\beta}"{description}, dashed, squiggly, from=1-1, to=2-3]
\end{tikzcd}\]
\end{definition}

\begin{definition}[dependent $(\cF,\prod)$-quantifier-free objects]\label{def:gen-prod-quant-free}
    Let $\fibration{\mE}{\mP}{\mB}$ be a fibration with all $\cF$-products. For $A \in \mB$, an object $\alpha \in \mE_A$ in the fibre is called \textbf{(dependent) $(\cF,\prod)$-quantifier-free} if for every arrow $f:{I}\to{A}$ in $\mB$, the reindexing $f^*\alpha$ is $(\cF,\prod)$-quantifier splitting.
\end{definition}

\begin{definition}[enough $(\cF,\prod)$-quantifier-free objects]
Let $\pair{\mB}{\cF}$ be a display map category. A fibration $\fibration{\mE}{\mP}{\mB}$ is said to have \textbf{enough $(\cF,\prod)$-quantifier-free objects} if it has all $\cF$-products and the following property holds: for all $I \in \mB$ and $\alpha\in \mE_I$ there exists some object $A \in \mB$, an arrow $f:A \twoheadrightarrow I$ in $\cF$, and $\beta \in \mE_A$ an $(\cF,\prod)$-quantifier-free object such that $\alpha \cong \prod_f(\beta)$.
\end{definition}

\subsection{Fibrations equipped with Hilbert $\epsilon$- and $\tau$-operators}

The notions of $(\cF,\coprod)$-quantifier-free and  $(\cF,\prod)$-quantifier-free elements introduced in the previous section allow us to formally introduce in the language of fibrations a generalisation of two concepts known as Hilbert's $\epsilon$- and $\tau$- operators, see \cite{bell} and \cite{Devidi95} for more details. A first categorical presentation of the $\epsilon$-operators was introduced in the proof-irrelevant setting of Lawvere's doctrines by M.E. Maietti, F. Pasquali and G. Rosolini in  \cite{maiettipasqualirosolini}. 

We briefly recall that, from a purely logical perspective, Hilbert's $\epsilon$-calculus is an extension of first-order logic with $\epsilon$-\emph{operators} representing witness functions of existential quantifiers: the intuition is that, given a first-order language, for every formula $A$ and variable $x$, we add a term $\epsilon x A$ representing \emph{some} $x$ satisfying $A$. These $\epsilon$-terms are governed by the so-called \emph{transfinite axiom}:
\[ A(x)\to A(\epsilon x A).\]
Such an extension provides a quantifier-free calculus because, \emph{classically}, we have that
\begin{align*}
    &\exists x A(X)\equiv A(\epsilon x A)\\
    &\forall x A(X)\equiv A(\epsilon x (\neg A)).
\end{align*}
Notice that, to properly apply such an approach in an intuitionistic setting, one needs to require also a dual-notion of $\epsilon$-operators since the previous second equivalence is not justified intuitionistically. This is precisely the intuition behind the notion Hilbert's $\tau$-operators: in this case, for every formula $A$ and variable $x$, we have to extend our first-order \emph{intuitionistic} language also with a term $\tau x A$ representing a dual notion of $\epsilon$-terms. Extending an intuitionistic language with both $\epsilon$- and $\tau$-operatos provides a quantifier-free calculus because
\begin{align*}
    &\exists x A(X)\equiv A(\epsilon x A)\\
    &\forall x A(X)\equiv A(\tau x  A).
\end{align*}

In the following definitions, we aim to present a fibrational account for these notions, generalising the notion introduced in \cite[Def. 5.10]{maiettipasqualirosolini} of doctrine equipped with Hilbert's $\epsilon$-operators.

To achieve this goal, we employ the idea and the characterisation presented in the language of Lawvere doctrines in \cite{maiettitrotta2023}: in particular, in \cite[Thm. 5.1]{maiettitrotta2023} the authors show that a given existential doctrine is equipped with Hilbert's $\epsilon$-operators in the sense of \cite{maiettipasqualirosolini} if and only if every element of the doctrine is $\exists$-free (\ie~the doctrine is isomorphic to the existential completion of itself). 

Motivated by this result in the proof-irrelevant setting, we introduce the following notion in the language of fibrations:

\begin{definition}[dependent Hilbert $\epsilon$-fibration]
 Let $\pair{\mB}{\cF}$ be a display map category. A fibration $\fibration{\mE}{\mP}{\mB}$ is called a \textbf{(dependent) Hilbert $\epsilon$-fibration} if
\begin{itemize}
    \item $\pair{\mB}{\cF}$ is a display map category with $\cF$-dependent coproducts;
    \item the fibration $\mP$ has fibred coproducts along $\cF$;
    \item every object of the fibration $\mP$ is a $(\cF, \coprod)$-quantifier-free object.
\end{itemize}
\end{definition}

\begin{theorem}\label{thm_char_Hilbert_epsilon}
    Let $\pair{\mB}{\cF}$ be a display map category  with $\cF$-dependent coproducts, and $\fibration{\mE}{\mP}{\mB}$ a fibration with fibred coproducts along $\cF$. Then $\mP$ is a dependent Hilbert $\epsilon$-fibration if and only if every element $\alpha\in \mE_I$ and every display map $f:I \twoheadrightarrow J$ there uniquely exist an arrow $\epsilon_{\alpha,f}:J \to I$ and a vertical map 
    \[\overline{\epsilon}_{\alpha,f}:   \coprod_f \alpha\vertarr (\epsilon_{\alpha,f})^*\alpha \]
    such that
    \begin{enumerate}
        \item $f\circ \epsilon_{\alpha,f}=\id_I$;
        \item   $ \id = (\epsilon_{\alpha,f})^*\eta_\alpha \circ \overline{\epsilon}_{\alpha,f}$,  where $\eta : \id_{\mE_I} \Rightarrow f^* \coprod_f$ is the unit of the adjunction $\coprod_f \dashv f^*$;
        \item if a vertical arrow $u:\alpha \vertarr \coprod_g \beta $ admits a decomposition 
\[\begin{tikzcd}
	\alpha && {\coprod_g\beta} \\
	& {t^*(\beta)}
	\arrow["u", squiggly, from=1-1, to=1-3]
	\arrow["h"', squiggly, from=1-1, to=2-2]
	\arrow["{t^*(\eta_{\beta})}"', squiggly, from=2-2, to=1-3]
\end{tikzcd}\]
for some vertical arrow $h$ and some section $t$ of $u$, then  $t=\epsilon_{\beta,g}$ and $h=\bar{\epsilon}_{\beta,g}\circ u$.
        
    \end{enumerate}
\end{theorem}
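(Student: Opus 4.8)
The plan is to reduce both implications to the defining universal property of $(\cF,\coprod)$-quantifier splitting objects, exploiting one structural simplification: since the reindexing $f^*\alpha$ of any object is itself an object of some fibre, requiring the splitting property of \emph{every} object is automatically stable under substitution. Hence the condition ``$\mP$ is a dependent Hilbert $\epsilon$-fibration'' (every object $(\cF,\coprod)$-quantifier-free) is equivalent to ``every object of $\mE$ is $(\cF,\coprod)$-quantifier splitting,'' and I would work throughout with the latter, cleaner formulation.

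For the forward direction, assume every object is quantifier splitting. Given $\alpha\in\mE_I$ and a display map $f:I\twoheadrightarrow J$, I would apply the splitting property of the object $\coprod_f\alpha\in\mE_J$ to the datum $\beta:=\alpha$, $u:=f$, and the identity arrow $h:=\id\colon\coprod_f\alpha\vertarr\coprod_f\alpha$. This produces a unique section $g$ of $f$ and a unique vertical map $\overline{h}\colon\coprod_f\alpha\vertarr g^*\alpha$ with $\id=g^*\eta_\alpha\circ\overline{h}$; setting $\epsilon_{\alpha,f}:=g$ and $\overline{\epsilon}_{\alpha,f}:=\overline{h}$ yields exactly conditions (1) and (2), their joint uniqueness being inherited from the uniqueness clause of the splitting property. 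Condition (3) is then a formal consequence of uniqueness applied at a different source: given any $u\colon\alpha\vertarr\coprod_g\beta$ decomposing as $u=t^*\eta_\beta\circ h$, I would observe that $u=(\epsilon_{\beta,g})^*\eta_\beta\circ(\overline{\epsilon}_{\beta,g}\circ u)$ is a second such decomposition (by condition (2) for $(\beta,g)$), so uniqueness of the splitting of $u$ through $\alpha$ forces $t=\epsilon_{\beta,g}$ and $h=\overline{\epsilon}_{\beta,g}\circ u$.

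For the converse, assume the stated $\epsilon$-data exist and are unique. To show an arbitrary $\gamma\in\mE_A$ is quantifier splitting, take $\beta\in\mE_B$, a display map $u:B\twoheadrightarrow A$, and a vertical $h\colon\gamma\vertarr\coprod_u\beta$. I would define the splitting by $g:=\epsilon_{\beta,u}$ and $\overline{h}:=\overline{\epsilon}_{\beta,u}\circ h$; condition (2) for $(\beta,u)$ then gives $g^*\eta_\beta\circ\overline{h}=(\epsilon_{\beta,u})^*\eta_\beta\circ\overline{\epsilon}_{\beta,u}\circ h=h$, establishing existence, while uniqueness of the decomposition is precisely condition (3) instantiated at the vertical arrow $h$ (with its display map taken to be $u$). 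As $\gamma$ was arbitrary, every object is quantifier splitting, hence quantifier-free, and $\mP$ is a dependent Hilbert $\epsilon$-fibration.

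The conceptual core is the identification of the $\epsilon$-operator $\epsilon_{\alpha,f}$ with the canonical splitting of $\id_{\coprod_f\alpha}$, together with the observation---recorded by condition (3)---that the witnessing section depends only on the target datum $(\beta,g)$ and not on the source, which is what makes the two formulations match on the nose. I expect no serious obstacle; the only point demanding care is the consistent bookkeeping of the coherence isomorphisms $g^*f^*\cong\id$ (for $g$ a section of $f$) that render composites such as $g^*\eta_\alpha$ well-typed, which must be tracked so that conditions (2) and (3) compose as claimed.
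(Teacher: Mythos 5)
Your proposal is correct and follows essentially the same route as the paper's own proof: the forward direction extracts $\epsilon_{\alpha,f}$ and $\overline{\epsilon}_{\alpha,f}$ by applying the splitting property to $\id\colon\coprod_f\alpha\vertarr\coprod_f\alpha$ and derives condition (3) by exhibiting $\bigl(\epsilon_{\beta,g},\,\overline{\epsilon}_{\beta,g}\circ u\bigr)$ as a competing decomposition, while the converse builds the splitting of any $h\colon\gamma\vertarr\coprod_u\beta$ as $\overline{\epsilon}_{\beta,u}\circ h$ and invokes (3) for uniqueness. Your opening observation---that demanding the splitting property of \emph{all} objects is automatically closed under reindexing, so quantifier-free and quantifier splitting coincide in this situation---is the same fact the paper uses implicitly in its closing sentence, just stated up front.
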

\begin{proof}
   $(\Rightarrow)$ By definition of dependent Hilbert $\epsilon$-fibrations, we have that every element is a $(\cF, \coprod)$-quantifier-free object. Therefore, we obtain ponits $(1)$ and $(2)$ by applying the definition of $(\cF, \coprod)$-quantifier-free object to the identity vertical arrow on $\id:  \coprod_f \alpha\vertarr  \coprod_f \alpha$. To show the last point, let us consider a factorization 
   \[\begin{tikzcd}
	\alpha && {\coprod_g\beta} \\
	& {t^*(\beta)}
	\arrow["u", squiggly, from=1-1, to=1-3]
	\arrow["h"', squiggly, from=1-1, to=2-2]
	\arrow["{t^*(\eta_{\beta})}"', squiggly, from=2-2, to=1-3]
\end{tikzcd}\]
of  $u$, with $t$ section of $u$. By hypothesis, every element is $(\cF, \coprod)$-quantifier-free, so this factorization, that is of the correct form as required in the definition of  $(\cF, \coprod)$-quantifier-free elements, is unique. But now notice that also
\[\begin{tikzcd}
	\alpha && {\coprod_g\beta} && {\coprod_g\beta} \\
	& {t^*(\beta)} && {\epsilon_{\beta,g}^*(\beta)}
	\arrow["u", squiggly, from=1-1, to=1-3]
	\arrow["h"', squiggly, from=1-1, to=2-2]
	\arrow["{t^*(\eta_{\beta})}"', squiggly, from=2-2, to=1-3]
	\arrow["\bar{\epsilon}_{\beta,g}"',squiggly, from=1-3, to=2-4]
	\arrow["\epsilon_{\beta,g}^*(\eta_\beta)"',squiggly, from=2-4, to=1-5]
	\arrow["\id",squiggly, from=1-3, to=1-5]
\end{tikzcd}\]
   provides another factorization of $u$, and it has the correct shape as required in the definition of $(\cF, \coprod)$-quantifier-free elements. Therefore, we can deduce that $\epsilon_{\beta,g}=t$ and that 
   $h=\bar{\epsilon}_{\beta,g} \circ {\epsilon}_{\beta,g}^*(\eta_\beta)\circ h= \bar{\epsilon}_{\beta,g}\circ u$. This concludes the proof of point $(3)$.

   \noindent
   
   $(\Leftarrow)$ Let $\alpha$ be an arbitrary object of $\mE_I$, and let us consider a vertical arrow $u: \alpha \vertarr \coprod_g \beta$ where  $g:A\twoheadrightarrow I$ is a display map. To show that $\alpha$ is  a $(\cF, \coprod)$-quantifier splitting it is enough to apply our assumptions to the object $ \coprod_g \beta$.  In fact, by hypothesis, we obtain that there uniquely exist an arrow  $\epsilon_{\beta,g}:I\to A$ and a vertical arrow $\overline{\epsilon}_{\beta,g}:   \coprod_g \beta\vertarr (\epsilon_{\beta,g})^*\beta $ satisfying the conditions $(1)$ and $(2)$, and we can use these arrows to define a vertical arrow $\overline{\epsilon}_{\beta,g}\circ u:   \alpha \vertarr (\epsilon_{\beta,g})^*\beta $. By $(1)$, we have that $g \circ \epsilon_{\beta,g}=\id$, and by $(2)$, that $ u = (\epsilon_{\beta,g})^*\eta_\beta \circ ( \overline{\epsilon}_{\beta,g}\circ u )$, \ie, such that the diagram
\[\begin{tikzcd}
	\alpha && {\coprod_g \beta} \\
	{\coprod_g \beta} && {(\epsilon_{\beta,g})^*\beta}
	\arrow["u", squiggly, from=1-1, to=1-3]
	\arrow["u"', squiggly, from=1-1, to=2-1]
	\arrow["{\overline{\epsilon}_{\beta,g}}"', squiggly, from=2-1, to=2-3]
	\arrow["{(\epsilon_{\beta,g})^*\eta_\beta}"', squiggly, from=2-3, to=1-3]
\end{tikzcd}\]
commutes. Finally, we have that such a factorization is unique by point $(3)$.
   So we have proved that every object $\alpha$ satisfies the conditions required in \Cref{def:gen-coprod-quant-splitting}, \ie, every element of the fibration is a $(\cF, \coprod)$-quantifier splitting. Therefore, since every object is $(\cF, \coprod)$-quantifier splitting, we can conclude that every object is $(\cF, \coprod)$-quantifier-free, \ie, that $\mP$ is a dependent Hilbert $\epsilon$-fibration.
\end{proof}
The previous characterisation allows us to prove in the language of fibrations the desired feature of the $\epsilon$-calculus, namely that $\exists x A(X)\equiv A(\epsilon x A)$:
\begin{cor}
    Let $\pair{\mB}{\cF}$ be a display map category  with $\cF$-dependent coproducts, and $\fibration{\mE}{\mP}{\mB}$ a dependent Hilbert $\epsilon$-fibration. Then for every object $\alpha\in \mE_I$ and every display map $f:I \twoheadrightarrow J$, we have that there exists a vertical arrow $\overline{\epsilon}_{\alpha,f}:   \coprod_f \alpha\vertarr (\epsilon_{\alpha,f})^*\alpha $ and it is an isomorphism.
\end{cor}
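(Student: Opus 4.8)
The plan is to exhibit $(\epsilon_{\alpha,f})^*\eta_\alpha$ as a two-sided inverse of $\overline{\epsilon}_{\alpha,f}$, using the uniqueness clause of \Cref{thm_char_Hilbert_epsilon}. First I would unpack what the hypotheses already give us. By \Cref{thm_char_Hilbert_epsilon}, the object $\alpha$ together with the display map $f$ determines a section $\epsilon_{\alpha,f}\colon J\to I$ of $f$ (point (1)) and a vertical arrow $\overline{\epsilon}_{\alpha,f}\colon \coprod_f\alpha\vertarr(\epsilon_{\alpha,f})^*\alpha$, and point (2) states precisely that
\[(\epsilon_{\alpha,f})^*\eta_\alpha\circ\overline{\epsilon}_{\alpha,f}=\id_{\coprod_f\alpha}.\]
Hence $\overline{\epsilon}_{\alpha,f}$ is already a split monomorphism with retraction $(\epsilon_{\alpha,f})^*\eta_\alpha$, and the whole statement reduces to checking the reverse composite
\[\overline{\epsilon}_{\alpha,f}\circ(\epsilon_{\alpha,f})^*\eta_\alpha=\id_{(\epsilon_{\alpha,f})^*\alpha}.\]
Here I am tacitly using the canonical cleavage isomorphism $(\epsilon_{\alpha,f})^*f^*\coprod_f\alpha\cong\coprod_f\alpha$ coming from $f\circ\epsilon_{\alpha,f}=\id_J$, so that $(\epsilon_{\alpha,f})^*\eta_\alpha$ is genuinely a vertical arrow $(\epsilon_{\alpha,f})^*\alpha\vertarr\coprod_f\alpha$.

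The key step is to feed the vertical arrow
\[u:=(\epsilon_{\alpha,f})^*\eta_\alpha\colon(\epsilon_{\alpha,f})^*\alpha\vertarr\coprod_f\alpha\]
into the uniqueness clause, point (3), of \Cref{thm_char_Hilbert_epsilon}, taking the source object to be $(\epsilon_{\alpha,f})^*\alpha$ (legitimate since in a dependent Hilbert $\epsilon$-fibration every object is $(\cF,\coprod)$-quantifier-free, so the clause applies with an arbitrary source, exactly as in the $(\Leftarrow)$-direction of the proof of \Cref{thm_char_Hilbert_epsilon}). The point is that $u$ is already presented in the factored shape demanded by point (3): it equals $(\epsilon_{\alpha,f})^*(\eta_\alpha)\circ\id_{(\epsilon_{\alpha,f})^*\alpha}$, so I may take $h=\id_{(\epsilon_{\alpha,f})^*\alpha}$ and $t=\epsilon_{\alpha,f}$, the latter being a section of $f$ by point (1). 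Point (3) then forces $h=\overline{\epsilon}_{\alpha,f}\circ u$, which is exactly
\[\id_{(\epsilon_{\alpha,f})^*\alpha}=\overline{\epsilon}_{\alpha,f}\circ(\epsilon_{\alpha,f})^*\eta_\alpha,\]
the identity we were missing (it also reconfirms $t=\epsilon_{\alpha,f}$, consistently with the indexing).

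Combining the two displayed identities shows that $\overline{\epsilon}_{\alpha,f}$ and $(\epsilon_{\alpha,f})^*\eta_\alpha$ are mutually inverse, so $\overline{\epsilon}_{\alpha,f}$ is the desired isomorphism. I expect the only real friction to be bookkeeping rather than conceptual: one must track the coherence isomorphisms $(\epsilon_{\alpha,f})^*f^*\cong(f\circ\epsilon_{\alpha,f})^*=\id_J^*\cong\id$ carefully enough to be sure that $u$ lands in $\coprod_f\alpha$ on the nose and that its trivial factorization is literally of the form consumed by point (3). Once this is set up, recognizing that the choice $h=\id$ is precisely what triggers the uniqueness clause is the whole content of the argument.
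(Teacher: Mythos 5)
Your proof is correct and takes essentially the same route as the paper's: both arguments apply the uniqueness of the section-plus-factorization data to the single arrow $(\epsilon_{\alpha,f})^*\eta_\alpha \colon (\epsilon_{\alpha,f})^*\alpha \vertarr \coprod_f\alpha$, comparing its trivial factorization ($t=\epsilon_{\alpha,f}$, $h=\id$) against the one through $\overline{\epsilon}_{\alpha,f}$ supplied by point (2). The only cosmetic difference is that the paper cites this uniqueness directly from the $(\cF,\coprod)$-quantifier-freeness of $(\epsilon_{\alpha,f})^*\alpha$, whereas you cite it via point (3) of \Cref{thm_char_Hilbert_epsilon}, which is the same uniqueness in packaged form.
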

\begin{proof}
    By \Cref{thm_char_Hilbert_epsilon} we have that $ \id = (\epsilon_{\alpha,f})^*\eta_\alpha \circ \overline{\epsilon}_{\alpha,f}$, so we need to prove $ \id =  \overline{\epsilon}_{\alpha,f} \circ (\epsilon_{\alpha,f})^*\eta_\alpha $. To show this, it is enough to observe that the diagram
\[\begin{tikzcd}
	{(\epsilon_{\alpha,f})^*\alpha} && {\coprod_f \alpha} \\
	& {(\epsilon_{\alpha,f})^*(\alpha)}
	\arrow["{(\epsilon_{\alpha,f})^*(\eta_{\alpha})}", squiggly, from=1-1, to=1-3]
	\arrow["{\bar{\epsilon}_{\alpha,f}\circ (\epsilon_{\alpha,f})^*(\eta_{\alpha})}"', squiggly, from=1-1, to=2-2]
	\arrow["{(\epsilon_{\alpha,f})^*(\eta_{\alpha})}"', squiggly, from=2-2, to=1-3]
\end{tikzcd}\]
commutes, since $ \id = (\epsilon_{\alpha,f})^*\eta_\alpha \circ \overline{\epsilon}_{\alpha,f}$. In fact, from this we can conclude that $ \id =  \overline{\epsilon}_{\alpha,f} \circ (\epsilon_{\alpha,f})^*\eta_\alpha $ from the fact that  ${(\epsilon_{\alpha,f})^*\alpha}$ is an $(\cF, \coprod)$-quantifier-free element and hence the factorization ${(\epsilon_{\alpha,f})^*(\eta_{\alpha})}$ is unique.
\end{proof}
We conclude this section by presenting the notion of dependent Hilbert $\tau$-fibration and its characterisation. Notice that since this notion is precisely the dual of the previous one, all the results and proofs presented for the case of $\epsilon$-operators can be dualized to the case of $\tau$-operators.
\begin{definition}[dependent Hilbert $\tau$-fibration]
 Let $\pair{\mB}{\cF}$ be a display map category. A fibration $\fibration{\mE}{\mP}{\mB}$ is called a \textbf{(dependent) Hilbert $\tau$-fibration} if
\begin{itemize}
    \item $\pair{\mB}{\cF}$ is a display map category with $\cF$-dependent coproducts;
    \item the fibration $\mP$ has fibred products along $\cF$;
    \item every object of the fibration $\mP$ is a $(\cF, \prod)$-quantifier-free object.
\end{itemize}
\end{definition}
\begin{theorem}\label{thm_char_Hilbert_tau}
    Let $\pair{\mB}{\cF}$ be a display map category  with $\cF$-dependent coproducts, and $\fibration{\mE}{\mP}{\mB}$ a fibration with fibred products along $\cF$. Then $\mP$ is a dependent Hilbert $\tau$-fibration if and only if for every element $\alpha\in \mE_I$ and every display map $f:I \twoheadrightarrow J$ there uniquely exist an arrow $\tau_{\alpha,f}:J \to I$ and a vertical map 
    \[\overline{\tau}_{\alpha,f}: (\tau_{\alpha,f})^*\alpha \vertarr  \prod_f \alpha \]
    such that
    \begin{enumerate}
        \item $f\circ \tau_{\alpha,f}=\id_I$;
        \item   $ \id = \overline{\tau}_{\alpha,f} \circ (\tau_{\alpha,f})^*\varepsilon_\alpha $,  where $\varepsilon :  f^* \prod_f \Rightarrow \id_{\mE_I}$ is the counit of the adjunction $f^* \dashv \prod_f$;
        \item if a vertical arrow $u: \prod_g \beta \vertarr \alpha $ admits a decomposition 
\[\begin{tikzcd}
	{\prod_g\beta} && \alpha \\
	& {t^*(\beta)}
	\arrow["u", squiggly, from=1-1, to=1-3]
	\arrow["{t^*(\varepsilon_{\beta})}"', squiggly, from=1-1, to=2-2]
	\arrow["h"', squiggly, from=2-2, to=1-3]
\end{tikzcd}\]
for some vertical arrow $h$ and some section $t$ of $u$, then  $t=\tau_{\beta,g}$ and $h=u \circ \bar{\tau}_{\beta,g}$.
        
    \end{enumerate}
\end{theorem}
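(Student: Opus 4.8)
The plan is to obtain the statement as the formal dual of \Cref{thm_char_Hilbert_epsilon}. Conceptually, the cleanest justification is the fibrewise opposite of \Cref{def_op_fibration}: passing from $\mp$ to $\mp^{(\op)}$ reverses all vertical maps, turns the fibred products $\prod_f$ along $\cF$ into fibred coproducts $\coprod_f$, and converts the counit $\varepsilon$ of $f^*\dashv\prod_f$ into the unit $\eta$ of $\coprod_f\dashv f^*$. Under this translation the $(\cF,\prod)$-quantifier splitting property of \Cref{def:gen-prod-quant-splitting} becomes precisely the $(\cF,\coprod)$-quantifier splitting property, so $\mp$ is a dependent Hilbert $\tau$-fibration exactly when $\mp^{(\op)}$ is a dependent Hilbert $\epsilon$-fibration. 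Applying \Cref{thm_char_Hilbert_epsilon} to $\mp^{(\op)}$ and reading back — $\tau_{\alpha,f}$ for the section $\epsilon_{\alpha,f}$ and $\overline{\tau}_{\alpha,f}\colon(\tau_{\alpha,f})^*\alpha\vertarr\prod_f\alpha$ for the reversed map $\overline{\epsilon}_{\alpha,f}$ — yields the three conditions, with every composite reversed. I would record this as the reason the theorem holds and then give the self-contained argument by mirroring the $\epsilon$-proof line by line.

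For the direction $(\Rightarrow)$, assuming every object is $(\cF,\prod)$-quantifier-free (\Cref{def:gen-prod-quant-free}), I obtain conditions (1) and (2) by applying the splitting universal property to the identity vertical arrow $\id\colon\prod_f\alpha\vertarr\prod_f\alpha$: this produces the section $\tau_{\alpha,f}$ of $f$ together with $\overline{\tau}_{\alpha,f}$ and the factorisation $\id=\overline{\tau}_{\alpha,f}\circ(\tau_{\alpha,f})^*\varepsilon_\alpha$. For condition (3), given any decomposition of $u\colon\prod_g\beta\vertarr\alpha$ through a section $t$, I note that it already has the shape required by $(\cF,\prod)$-quantifier-freeness; pre-composing the canonical factorisation of the identity on $\prod_g\beta$ with $u$ exhibits a second decomposition of the same form, and uniqueness of such decompositions forces $t=\tau_{\beta,g}$ and $h=u\circ\overline{\tau}_{\beta,g}$.

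For $(\Leftarrow)$, I would fix an arbitrary $\alpha\in\mE_I$ and a vertical arrow $u\colon\prod_g\beta\vertarr\alpha$ with $g\colon A\tofib I$, and show $\alpha$ is $(\cF,\prod)$-quantifier splitting. The key move — the one place where one must resist copying the $\epsilon$-proof verbatim — is that the hypothesis is applied not to $\alpha$ but to the object $\prod_g\beta$, yielding $\tau_{\beta,g}\colon I\to A$ with $g\circ\tau_{\beta,g}=\id_I$ and $\overline{\tau}_{\beta,g}$ satisfying $\id=\overline{\tau}_{\beta,g}\circ(\tau_{\beta,g})^*\varepsilon_\beta$. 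Setting $\overline{h}:=u\circ\overline{\tau}_{\beta,g}\colon(\tau_{\beta,g})^*\beta\vertarr\alpha$ and composing on the right with $(\tau_{\beta,g})^*\varepsilon_\beta$ gives the required decomposition $u=\overline{h}\circ(\tau_{\beta,g})^*\varepsilon_\beta$ with $\tau_{\beta,g}$ as section, while uniqueness is supplied by condition (3). Hence every object is $(\cF,\prod)$-quantifier splitting; since this holds for every object — in particular for every reindexing $f^*\alpha$ — every object is in fact $(\cF,\prod)$-quantifier-free, so $\mp$ is a dependent Hilbert $\tau$-fibration. The only real subtlety throughout is the bookkeeping of arrow directions: because the vertical maps now point \emph{into} the fibre-object $\alpha$, every triangle and every composite from the $\epsilon$-proof is reversed and the counit $\varepsilon$ plays the structural role previously played by the unit $\eta$; getting these orientations right is the main obstacle, as there is no genuinely new computation.
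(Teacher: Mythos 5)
Your proposal is correct and takes essentially the same route as the paper: the paper offers no separate proof of this theorem, stating only that the $\tau$-notion is precisely dual to the $\epsilon$-notion so that all results and proofs for $\epsilon$-operators dualize, which is exactly the fibrewise-opposite argument you lead with. Your explicit mirroring of the proof of \Cref{thm_char_Hilbert_epsilon} (applying the splitting property of $\prod_f\alpha$ to the identity for conditions (1)--(2), pre- rather than post-composing with the canonical factorisation of the identity for (3), and applying the hypothesis to $\beta$ and $g$ in the converse) correctly fills in the details the paper leaves implicit.
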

\begin{cor}
    Let $\pair{\mB}{\cF}$ be a display map category  with $\cF$-dependent coproducts, and $\fibration{\mE}{\mP}{\mB}$ a dependent Hilbert $\tau$-fibration. Then for every object $\alpha\in \mE_I$ and every display map $f:I \twoheadrightarrow J$, we have that there exists a vertical arrow $\overline{\tau}_{\alpha,f}:    (\tau_{\alpha,f})^*\alpha \vertarr \prod_f \alpha$ and it is an isomorphism.
\end{cor}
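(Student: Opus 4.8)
The plan is to argue by exact duality with the corollary for $\epsilon$-operators proved above; since a dependent Hilbert $\tau$-fibration is the fibrewise-opposite counterpart of a dependent Hilbert $\epsilon$-fibration, each step below is the mirror image of the preceding proof. By condition~(2) of \Cref{thm_char_Hilbert_tau} the vertical arrow $\overline{\tau}_{\alpha,f}$ already satisfies $\overline{\tau}_{\alpha,f}\circ(\tau_{\alpha,f})^*\varepsilon_\alpha=\id_{\prod_f\alpha}$, so one of the two round-trip composites is the identity. Here $(\tau_{\alpha,f})^*\varepsilon_\alpha$ is read as a vertical arrow $\prod_f\alpha\vertarr(\tau_{\alpha,f})^*\alpha$: this uses condition~(1), $f\circ\tau_{\alpha,f}=\id$, which yields the fibre identification $(\tau_{\alpha,f})^*f^*\prod_f\alpha\cong\prod_f\alpha$. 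Thus it remains only to prove the reverse equation $(\tau_{\alpha,f})^*\varepsilon_\alpha\circ\overline{\tau}_{\alpha,f}=\id_{(\tau_{\alpha,f})^*\alpha}$, which exhibits $(\tau_{\alpha,f})^*\varepsilon_\alpha$ as a two-sided inverse of $\overline{\tau}_{\alpha,f}$.

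To obtain this reverse equation I would exploit that $(\tau_{\alpha,f})^*\alpha$ is $(\cF,\prod)$-quantifier-free, since every object of a dependent Hilbert $\tau$-fibration is. Applying the universal property of \Cref{def:gen-prod-quant-splitting} to the vertical map $(\tau_{\alpha,f})^*\varepsilon_\alpha\colon\prod_f\alpha\vertarr(\tau_{\alpha,f})^*\alpha$, viewed as a map out of the product $\prod_f\alpha$ into the quantifier-splitting object $(\tau_{\alpha,f})^*\alpha$, produces a \emph{unique} section $g$ of $f$ together with a \emph{unique} vertical factor $\overline{h}\colon g^*\alpha\vertarr(\tau_{\alpha,f})^*\alpha$ satisfying $(\tau_{\alpha,f})^*\varepsilon_\alpha=\overline{h}\circ g^*\varepsilon_\alpha$.

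The key step is to produce two such factorizations that share the common section $g=\tau_{\alpha,f}$. The tautological one takes $\overline{h}=\id_{(\tau_{\alpha,f})^*\alpha}$. The second takes $\overline{h}=(\tau_{\alpha,f})^*\varepsilon_\alpha\circ\overline{\tau}_{\alpha,f}$; its validity is the commutativity
\[ (\tau_{\alpha,f})^*\varepsilon_\alpha=\big((\tau_{\alpha,f})^*\varepsilon_\alpha\circ\overline{\tau}_{\alpha,f}\big)\circ(\tau_{\alpha,f})^*\varepsilon_\alpha, \]
which collapses to a triviality upon inserting $\overline{\tau}_{\alpha,f}\circ(\tau_{\alpha,f})^*\varepsilon_\alpha=\id$ from condition~(2). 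Since the factor $\overline{h}$ is unique, the two choices coincide, giving precisely $(\tau_{\alpha,f})^*\varepsilon_\alpha\circ\overline{\tau}_{\alpha,f}=\id_{(\tau_{\alpha,f})^*\alpha}$; combined with condition~(2), this shows that $\overline{\tau}_{\alpha,f}$ is an isomorphism with inverse $(\tau_{\alpha,f})^*\varepsilon_\alpha$.

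I do not anticipate a genuine obstacle, as the whole argument is the formal dual of the $\epsilon$-case; the single point demanding care is the bookkeeping of the canonical isomorphisms $(\tau_{\alpha,f})^*f^*\cong\id$ induced by $f\circ\tau_{\alpha,f}=\id$, so that both candidate factorizations genuinely have the same section $\tau_{\alpha,f}$ and the same target $(\tau_{\alpha,f})^*\alpha$, which is exactly what licenses the uniqueness clause of \Cref{def:gen-prod-quant-splitting}.
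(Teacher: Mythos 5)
Your proposal is correct and follows essentially the same route as the paper: the paper proves the $\epsilon$-version of this corollary by exactly your two-factorization argument (the tautological factorization of the reindexed unit versus the one built from $\overline{\epsilon}_{\alpha,f}$, then uniqueness of the splitting), and for the $\tau$-case it simply declares that all proofs dualize. Your write-up is precisely that dualization, with the one-sided identity coming from condition~(2) of the characterisation theorem and the reverse identity from the uniqueness clause of the quantifier-splitting property applied to $(\tau_{\alpha,f})^*\varepsilon_\alpha$.
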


\subsection{Dependent Skolem fibrations}

Abstracting from the concrete class of product projections to an arbitrary class of display maps we are led to the following generalisation of the notion of Skolem fibration introduced in~\cite[Definition~2.7]{trotta_et_al:LIPIcs.MFCS.2021.87}.

\begin{definition}[Dependent Skolem fibration]
 Let $\pair{\mB}{\cF}$ be a display map category. A fibration $\fibration{\mE}{\mP}{\mB}$ is called a \textbf{(dependent) Skolem fibration} if
\begin{itemize}
    \item its base category $\mB$ has dependent products along $\cF$;
    \item the fibration $\mP$ has fibred products along $\cF$;
    \item the fibration $\mP$ has enough $(\cF, \coprod)$-quantifier-free objects;\footnote{In particular, $\mP$ has fibred coproducts along $\cF$.}
    \item $(\cF,\coprod)$-quantifier-free objects are stable under $\cF$-products, \ie, if for any $(\cF,\coprod)$-quantifier-free object $\alpha \in \mE_I$, $I \in \mB$, the object $\prod_f(\alpha) \in \mE_J$ is $(\cF, \coprod)$-quantifier-free, too, for any map $f:I \twoheadrightarrow J$ in $\cF$.
\end{itemize}
\end{definition}

As a convention, we will often abbreviate $\mB$ for $\pair{\mB}{\cF}$.

Again, as in the remark after~\cite[Definition~2.7]{trotta_et_al:LIPIcs.MFCS.2021.87}, we get that by the last condition, given a dependent Skolem fibration $\fibration{\mE}{\mP}{\mB}$ its full subfibration $\fibration{\mE'}{\mP'}{\mB}$ of $(\coprod,\cF)$-quantifier-free objects has fibred $\cF$-products.
\begin{example}
    Every (dependent) Hilbert $\epsilon$-fibration with fibred products and coproducts along $\cF$ (and whose base category has dependent products along $\cF$) is a Skolem fibration.
\end{example}
Dependent Skolem fibrations validate a Skolem principle, generalising the one from~\cite[Proposition~2.8]{trotta_et_al:LIPIcs.MFCS.2021.87}. Versions of this principle have been established in various related settings~\cite{awodey1995axiom,joyal2017notesclanstribes,hofstra2011,vonGlehn2018polynomials,ccnw2024}.

\begin{theorem}[Dependent Skolemisation]
Let $\pair{\mB}{\cF}$ be a display map category with units and $\fibration{\mE}{\mP}{\mB}$ a dependent Skolem fibration over it. Let $g:A \fibarr S$ and $f:B \fibarr A$ be maps in $\cF$. Consider the $\cF$-dependent product of $f$ along $g$, as given by the diagram:
\[\begin{tikzcd}[column sep=huge, row sep=huge]
	B & Z & X \\
	& A & S
	\arrow["f"', two heads, from=1-1, to=2-2]
	\arrow["g"', two heads, from=2-2, to=2-3]
	\arrow[two heads, from=1-2, to=2-2]
	\arrow["h", two heads, from=1-3, to=2-3]
	\arrow["{g'}", two heads, from=1-2, to=1-3]
	\arrow["e"', from=1-2, to=1-1]
	\arrow["\lrcorner"{anchor=center, pos=0.125}, draw=none, from=1-2, to=2-3]
\end{tikzcd}\]
Then, there is a vertical isomorphism
\[ \prod_g \coprod_f(\beta) \cong \coprod_h \prod_{g'} e^*(\beta)\]
for each $\beta \in \mE_B$.
\end{theorem}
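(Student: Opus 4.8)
The plan is to single out the canonical ``easy'' comparison map realising $\exists\forall\Rightarrow\forall\exists$ and then prove it is invertible by testing it against quantifier-free objects, where the splitting universal property of \Cref{def:gen-coprod-quant-splitting} supplies the Skolemisation (choice) content. Throughout write $p\colon Z\to A$ for the pullback projection, so that $f\circ e=p$ and $g\circ p=h\circ g'$, and set $\theta:=\prod_{g'}e^*\beta\in\mE_X$. First I would construct a vertical map $\Phi\colon\coprod_h\theta\vertarr\prod_g\coprod_f\beta$: transposing under $\coprod_h\dashv h^*$ it suffices to give a map $\theta\vertarr h^*\prod_g\coprod_f\beta$, and the right Beck--Chevalley condition for the pullback square $(p,g',g,h)$ yields $h^*\prod_g\cong\prod_{g'}p^*$; so it is enough to produce a map $e^*\beta\vertarr p^*\coprod_f\beta$ and apply $\prod_{g'}$. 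Since $p=f\circ e$, reindexing the unit $\eta_\beta\colon\beta\vertarr f^*\coprod_f\beta$ along $e$ gives exactly $e^*\eta_\beta\colon e^*\beta\vertarr p^*\coprod_f\beta$, and this defines $\Phi$.

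To show $\Phi$ is an isomorphism I would prove that $\Phi\circ(-)$ is a bijection on $\mE_S(\tau,-)$. Consider first a $(\cF,\coprod)$-quantifier-free object $\tau\in\mE_S$. On one side, $g^*\dashv\prod_g$ gives $\mE_S(\tau,\prod_g\coprod_f\beta)\cong\mE_A(g^*\tau,\coprod_f\beta)$; as $g^*\tau$ is again quantifier-free (stability under reindexing), the splitting property identifies this hom-set with the disjoint union $\coprod_{s}\mE_A(g^*\tau,s^*\beta)$ ranging over sections $s$ of $f$. On the other side, the splitting property of $\tau$ identifies $\mE_S(\tau,\coprod_h\theta)$ with $\coprod_{t}\mE_S(\tau,t^*\theta)$ ranging over sections $t$ of $h$. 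Thus both hom-sets are disjoint unions indexed by sections, and it remains to match the index sets and the summands.

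The index sets agree because sections of $h$ correspond bijectively to sections of $f$: this is the universal property of the $\cF$-dependent product applied to the map $\id_S$, which lies in $\cF$ precisely because $\pair{\mB}{\cF}$ has units. Concretely, a section $t\colon S\to X$ of $h$ transposes (via $A=A\times_S S$) to a map $A\to B$ over $A$, i.e.\ a section $s$ of $f$, and $e\circ\iota_t=s$, where $\iota_t\colon A\to Z$ is the comparison into the pullback induced by $t$. For the summands, pulling $g'$ back along $t$ produces the pullback square with legs $g\colon A\to S$ and $\iota_t\colon A\to Z$; the right Beck--Chevalley condition then gives $t^*\theta=t^*\prod_{g'}e^*\beta\cong\prod_g\iota_t^*e^*\beta=\prod_g(e\circ\iota_t)^*\beta=\prod_g s^*\beta$, whence $\mE_S(\tau,t^*\theta)\cong\mE_A(g^*\tau,s^*\beta)$ by $g^*\dashv\prod_g$. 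All these identifications are natural in $\tau$, and tracing the units one checks they are realised by postcomposition with $\Phi$; hence $\Phi_*$ is a bijection for every quantifier-free $\tau$.

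Finally, for an arbitrary $\tau\in\mE_S$ I would invoke that $\mP$ has \emph{enough} $(\cF,\coprod)$-quantifier-free objects to write $\tau\cong\coprod_k\sigma$ with $\sigma$ quantifier-free over some $P$ and $k\in\cF$; then $\mE_S(\coprod_k\sigma,-)\cong\mE_P(\sigma,k^*(-))$ reduces the claim to the invertibility of $k^*\Phi$ on $\sigma$. The key point is that the whole configuration is stable under the reindexing $k^*$: by the left and right Beck--Chevalley conditions, together with the fact (established in the earlier proposition on codomain fibrations) that $\cF$-dependent products in the base satisfy Beck--Chevalley, the dependent-product diagram of $(f,g)$ pulls back to that of the reindexed data, and $k^*\Phi$ is again a comparison map of exactly the same shape over $P$. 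The quantifier-free case then applies verbatim, so $\Phi_*$ is bijective on all objects and $\Phi$ is an isomorphism by the Yoneda lemma. I expect the main obstacle to be the bookkeeping in the third paragraph: setting up the section correspondence through the dependent-product universal property (where units are indispensable) and combining it with the Beck--Chevalley identification $t^*\prod_{g'}e^*\beta\cong\prod_g s^*\beta$ and the equality $e\circ\iota_t=s$, since this is precisely the step that encodes the Skolemisation content.
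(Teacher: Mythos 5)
Your proposal is correct in substance, but it takes a genuinely different route from the paper's proof, and the difference is worth recording. The paper never singles out a comparison morphism: for quantifier-free $\sigma \in \mE_S$ it hand-builds a pair of mutually inverse bijections between $\hom_{\mE_A}\big(g^*\sigma, \coprod_f\beta\big)$ and $\hom_{\mE_S}\big(\sigma, \coprod_h\prod_{g'}e^*\beta\big)$ --- using the splitting property to extract the section of $f$, the dependent-product universal property (with units) to produce the maps $k \colon S \to X$ and $k'\colon A \to Z$, and BCC to identify $\prod_g (k')^* \cong k^*\prod_{g'}$ --- and then reduces arbitrary $\sigma$ by pulling the whole configuration back along a quantifier-free cover. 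You instead construct the canonical map $\Phi\colon \coprod_h\prod_{g'}e^*\beta \vertarr \prod_g\coprod_f\beta$ (which needs only units and BCC, no Skolem hypotheses at all) and prove that postcomposition with it is bijective: on quantifier-free $\tau$ by decomposing both hom-sets as disjoint unions over sections of $f$, respectively $h$, matched through the dependent-product universal property (your $t \leftrightarrow s = e\iota_t$ correspondence is the paper's $u \mapsto (k,k')$ step in disguise) and the right BCC for the square $(\iota_t,g,g',t)$; on general $\tau$ by the same cover-and-pullback reduction as the paper's final step. The ingredients are identical, but your organization buys a cleaner endgame: postcomposition with a fixed vertical map is automatically natural in the test object, so bijectivity for all $\tau$ immediately makes $\Phi$ an isomorphism, with no naturality caveat. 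This is precisely where the paper is delicate --- its bijections depend on chosen covers, its own footnote concedes that naturality in arbitrary $\sigma$ seems to fail, and the passage from possibly non-natural hom-bijections to the object-level isomorphism is left implicit (compare \Cref{rem:loc-vs-glob-iso}). Indeed, since your $\Phi$ is assembled from units and BCC isomorphisms it is natural in $\beta$, so your route, once the bookkeeping is done, would even upgrade the statement to a natural isomorphism in $\beta$: the non-canonical choice of covers enters only the verification, never the map itself. The cost, which you correctly flag, is coherence bookkeeping: identifying the composite bijection with $\Phi_*$ requires the pasting coherence of BCC isomorphisms (the squares $(\iota_t,g,g',t)$ and $(p,g',g,h)$ paste to an identity square), and the general case requires both pullback-stability of $\cF$-dependent products --- which is indeed the BCC content of the paper's proposition on the codomain fibration $\fibration{\cF^{\to}}{\mathsf{cod}}{\mB}$, as you cite --- and the identification of $k^*\Phi$ with the comparison map of the pulled-back configuration; these are standard mate-calculus checks, suppressed at the same level of detail as in the paper's own proof.
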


We remark that this in general does \emph{not} seem to give rise to a natural isomorphism of functors.

\begin{proof}
We generalise the proof from~\cite[Proposition~2.8]{trotta_et_al:LIPIcs.MFCS.2021.87}. The difference is that we replace cartesian projections by the given  class $\cF$ of display maps. The strategy is as follows: we first produce a family of bijections
\[ \Phi_{\sigma,\beta} : \hom_{\mE_A}\Big(g^*\sigma,\coprod_f(\beta)\Big) \to \hom_{\mE_S}\Big(\sigma,\coprod_h \prod_{g'}e^*(\beta)\Big) \]
with $\beta \in \mE_B$ and $(\cF,\coprod)$-quantifier-free $\sigma \in \mE_S$.
We then construct an inverse $\Psi_{\sigma,\beta}$. Finally we show how to lift this to the case of general elements $\sigma \in \mE_S$.

\noindent
\textbf{1) Construction of $\Phi$.}
Let $\beta \in \mE_B$. Assume $\sigma \in \mE_S$ is a $(\cF,\coprod)$-quantifier-free object.

Let $m:g^*(\sigma) \vertarr \coprod_f(\beta)$ be a vertical arrow in $\mE_A$. By quantifier-freeness of $\sigma$ the element $g^*(\sigma)$ is a $(\cF,\coprod)$-quantifier splitting. Hence, there uniquely exist a section $u:A \to B$ in $\mB$ of $f$ and a vertical arrow $\overline{m} \in \mE_A(g^*(\sigma), u^*(\beta))$ such that $m= u^*\eta_\beta \circ \overline{m}$, for $\eta : \id_{\mE_B} \Rightarrow f^* \coprod_f$. Since $\mB$ has $\cF$-dependent products and $\cF$ has units, we get induced maps $k:S \to X$ and $k':A \to Z$ as follows:
\[\begin{tikzcd}[sep=large]
	&&& A &&& S \\
	B && Z & {} && X \\
	\\
	&& A &&& S
	\arrow["{h'}"{description}, two heads, from=2-3, to=4-3]
	\arrow["g"{description}, two heads, from=4-3, to=4-6]
	\arrow["h"{description}, two heads, from=2-6, to=4-6]
	\arrow["e"{description}, from=2-3, to=2-1]
	\arrow["f"{description}, two heads, from=2-1, to=4-3]
	\arrow["u"{description}, from=1-4, to=2-1]
	\arrow["{k'}"{description}, dashed, from=1-4, to=2-3]
	\arrow["{\exists! \,k}"{description}, dashed, from=1-7, to=2-6]
	\arrow["g"{description}, two heads, from=1-4, to=1-7]
	\arrow[curve={height=-12pt}, Rightarrow, no head, from=1-7, to=4-6]
	\arrow["\lrcorner"{anchor=center, pos=0.125}, draw=none, from=1-4, to=4-6]
	\arrow[curve={height=-12pt}, Rightarrow, no head, from=1-4, to=4-3]
	\arrow["\lrcorner"{anchor=center, pos=0.125}, draw=none, from=2-3, to=4-6]
	\arrow["{g'}"{description}, two heads, from=2-3, to=2-6, crossing over]
\end{tikzcd}\]
Note that $k = k_{e,f,g,h,\sigma,\beta,m}$ is uniquely determined with the property of making the respective subdiagrams commute, and so is $k'$.
For $\overline{m}\in  \mE_A(g^*(\sigma),u^*(\beta))$, consider the adjoint transpose $\overline{m}^\flat$ across the adjunction $g^* \dashv \prod_g$ given by 
\[\begin{tikzcd}
	\sigma && {\prod_g(k')^*e^*\beta \cong \prod_gu^*\beta} \\
	{\prod_g g^*\sigma}
	\arrow[squiggly, "{\overline{m}^\flat}", from=1-1, to=1-3]
	\arrow[squiggly, "{\kappa_\sigma}"', from=1-1, to=2-1]
	\arrow[squiggly, "{\prod_g \overline{m}}"', from=2-1, to=1-3]
\end{tikzcd}\]
in $\mE_S$. Since
\[\begin{tikzcd}
	A & S \\
	Z & X
	\arrow["{k'}"', from=1-1, to=2-1]
	\arrow["{g'}"', two heads, from=2-1, to=2-2]
	\arrow["g", two heads, from=1-1, to=1-2]
	\arrow["k", from=1-2, to=2-2]
	\arrow["\lrcorner"{anchor=center, pos=0.125}, draw=none, from=1-1, to=2-2]
\end{tikzcd}\]
is a pullback with $g,g' \in \cF$, we get from the Beck--Chevalley condition that
\[\prod_g(k')^* \cong k^* \prod_{g'}.\]
Consider the unit $\nu: \id_{\mE_A} \Rightarrow h^* \coprod_h$. Applying $k^*$ to  $\nu_\alpha: \alpha \rightsquigarrow h^* \coprod_h(\alpha)$, for any $\alpha \in \mE_A$, yields\footnote{In due course, we will often suppress the isomorphisms mediated by cartesian liftings and the BCC so as to not further complicate notation.}
\[k^*\nu_\alpha: k^*(\alpha) \vertarr k^*h^*\coprod_h(\alpha) \cong \coprod_h(\alpha).\]
Now, let $\alpha := \prod_{g'}e^*(\beta)$. Then, we find
\[ k^*\alpha = k^* \prod_{g'}e^*(\beta) \stackrel{\text{BCC}}{\cong} \prod_g (k')^* e^*(\beta) = \prod_g u^*(\beta). \]
This means that
\[ \overline{m}^\flat: \sigma \vertarr \prod_g u^*(\beta) \cong k^*(\alpha). \]
Postcomposing with $k^*\nu_\alpha$ yields an arrow
\[\begin{tikzcd}
	\sigma && {k^*(\alpha)} && {k^*h^*\coprod_h(\alpha) \cong \coprod_h(\alpha)=\coprod_h \prod_{g'}e^*(\beta)}
	\arrow[squiggly, "{\overline{m}^\flat}", from=1-1, to=1-3]
	\arrow[squiggly, "{k^*\nu_\alpha}", from=1-3, to=1-5]
\end{tikzcd}\]
and we define, up to isomorphism,
\[  \Phi_{\sigma,\beta} : \hom_{\mE_S}\Big(g^*\sigma,\coprod_f(\beta)\Big) \to \hom_{\mE_B}\Big(\sigma,\coprod_h \prod_{g'}e^*(\beta)\Big), \quad m \mapsto k^*\nu_{\prod_{g'}e^*(\beta)} \circ \overline{m}^\flat, \]
or more verbosely:
\[\begin{tikzcd}
	& {\mE_B(g^*\sigma,\coprod_f \beta)} && {\mE_B(\sigma,\coprod_h\prod_{g'}e^* \beta)} \\
	{\big( g^*\sigma} & {\coprod_f \beta\big)} && {\big(\sigma} && {k^*h^*\coprod_h\prod_{g'}e^*\beta \cong\coprod_h\prod_{g'}e^*\beta\big)} \\
	&&& {\prod_gu^*\beta \cong k^*\prod_{g'}e^*\beta}
	\arrow["{\Phi_{\sigma,\beta}}", from=1-2, to=1-4]
	\arrow[squiggly, "{\Phi_{\sigma,\beta}(m)}", from=2-4, to=2-6]
	\arrow["{\overline{m}^\flat}"', squiggly, from=2-4, to=3-4]
	\arrow["{k^*\nu_{\prod_{g'} e^*\beta}}"', squiggly, from=3-4, to=2-6]
	\arrow[maps to, from=2-2, to=2-4]
	\arrow[squiggly, "m", from=2-1, to=2-2]
\end{tikzcd}\]

\noindent
\textbf{2) Construction of $\Psi$.} We now have to construct an inverse for $\Phi_{\sigma,\beta}$. We claim that this is given by the following family of maps, where the composition is supposed to be read up to some chosen canonical isomorphisms:
\[ \Psi_{\sigma,\beta} : \hom_{\mE_B}\Big(\sigma,\coprod_h \prod_{g'}e^*(\beta)\Big) \to \hom_{\mE_S}\Big(g^*\sigma,\coprod_f(\beta)\Big), r \mapsto u^*\eta_\beta \circ \widetilde{r}^\sharp. \]
We are to describe its action on arrows. Starting with $r:\sigma \to \coprod_h(\prod_{g'} e^*\beta)$, letting $\gamma := \prod_{g'} e^*\beta$, we obtain by the quantifier splitting a map $\widetilde{r}$, such that:
\[\begin{tikzcd}
	\sigma && {k^*h^*\coprod_h(\prod_{g'}e^*\beta)} \\
	& {k^*\prod_{g'}e^*\beta}
	\arrow[squiggly, "r", from=1-1, to=1-3]
	\arrow[squiggly, "{\widetilde{r}}"', from=1-1, to=2-2]
	\arrow[squiggly, "{k^*\nu_\gamma}"', from=2-2, to=1-3]
\end{tikzcd}\]
By the BCC, we have
\[ \prod_g (k')^* \cong k^* \prod_{g'}\]
and so
\[ \widetilde{r}: \sigma \vertarr k^*\prod_{g'} e^*\beta \cong \prod_{g}(k')^*e^*\beta. \]
Its left adjoint transpose is given by:

\[\begin{tikzcd}
	{g^*\sigma} && {(k')^*e^*\beta} \\
	{g^*\coprod_g(k')^*e^*\beta}
	\arrow[squiggly, "{\widetilde{r}^\sharp}", from=1-1, to=1-3]
	\arrow[squiggly, "{g^*\widetilde{r}}"', from=1-1, to=2-1]
	\arrow[squiggly, "{\varepsilon_{(k')^*e^*\beta}}"', from=2-1, to=1-3]
\end{tikzcd}\]

In sum, the candidate inverse map is then given by:
\[\begin{tikzcd}
	& {\mE_B(\sigma,\coprod_h \prod_{g'}e^*\beta)} && {\mE_S(g^*\sigma,\coprod_f \beta)} \\
	{\big( \sigma} & {\coprod_h \prod_{g'}e^*\beta\big)} && {\big(g^*\sigma} && {u^*f^*\coprod_f\beta\cong \coprod_f\beta\big)} \\
	&&& {(k')^*e^*\beta\cong u^*\beta}
	\arrow["{\Psi_{\sigma,\beta}}", from=1-2, to=1-4]
	\arrow[squiggly, "{\Psi_{\sigma,\beta}(r)}", from=2-4, to=2-6]
	\arrow[squiggly, "{\widetilde{r}^\sharp}"', from=2-4, to=3-4]
	\arrow[squiggly, "{u^*\eta_\beta}"', from=3-4, to=2-6]
	\arrow[maps to, from=2-2, to=2-4]
	\arrow[squiggly, "r", from=2-1, to=2-2]
\end{tikzcd}\]
\noindent
\textbf{3) Mutual inverseness.} We claim that $\Psi_{\sigma,\beta}$ and $\Phi_{\sigma,\beta}$ are inverse to each other. Let us suppress the indices in the following. We explicitly treat the case for $\Psi \circ \Phi = \id$, the case of $\Phi \circ \Psi = \id$ is analogous.

First, we claim that $\widetilde{\Phi(m)}  = \overline{m}^\flat$, \ie, we want to verify that:
\[\begin{tikzcd}
	\sigma && {k^*h^*\coprod_h(\prod_{g'}e^*\beta)} \\
	{k^*\prod_{g'}e^*\beta \cong \prod_{g}(k')^*e^*\beta}
	\arrow[squiggly, "{\Phi(m)}", from=1-1, to=1-3]
	\arrow[squiggly, "{k^*\nu_{\prod_{g'}e^*\beta}}"', from=2-1, to=1-3]
	\arrow[squiggly, "{\widetilde{\Phi(m)}}"', from=1-1, to=2-1]
\end{tikzcd}\]
But, by definition of $\Phi$, we have $\Phi(m) = k^*\nu_{\prod_{g'}e^*\beta} \circ \overline{m}^\flat$, and by uniqueness of the section-factorization pair due to quantifier-freeness this yields $\widetilde{\Phi(m)} = \overline{m}^\flat$.

Now, we recall the general formula for $\Psi$: let $r \colon \sigma \vertarr \coprod_h \prod_{g'} e^* \beta$, then
\[\begin{tikzcd}
	{g^{*}\sigma} && {g^{*}\prod_g(k')^{*}e^{*}\beta} && {(k')^{*} e^{*} \beta \cong u^{*}\beta} && {u^{*}f^{*}\coprod_f\beta \cong\coprod_f\beta.}
	\arrow[squiggly, "{g^{*}\widetilde{r}}", from=1-1, to=1-3]
	\arrow[squiggly, "{\varepsilon_{(k')^{*}e^{*}\beta}}", from=1-3, to=1-5]
	\arrow[squiggly,"{u^{*}\eta_\beta}", from=1-5, to=1-7]
	\arrow[squiggly,"{\Psi(r)}"{description}, curve={height=24pt}, from=1-1, to=1-7]
	\arrow[squiggly, "{\widetilde{r}^\sharp}", curve={height=-24pt}, from=1-1, to=1-5]
\end{tikzcd}\]
Now, for $m \colon g^*\sigma \vertarr \coprod_f \beta$, taking $r:=\Phi(m)$, we get $\widetilde{r} = \widetilde{\Phi(m)} = \overline{m}^\flat$.
But then
\[\Psi(\Phi(m)) = u^* \eta_\beta \circ \widetilde{\Phi(m)}^\sharp =  u^* \eta_\beta \circ (\overline{m}^\flat)^\sharp = u^* \eta_\beta \circ \overline{m} = m.\]

In sum, we have established bijections\footnote{One can show that these bijections are ``natural in $\beta \in \mE_B$ and \emph{$(\cF,\coprod)$-quantifier-free} $\sigma \in \mE_S$,'' but seemingly naturality fails with respect to arbitrary $\sigma$.}
\[ \Phi_{\sigma,\beta} : \hom_{\mE_A}\Big(g^*\sigma,\coprod_f(\beta)\Big) \to \hom_{\mE_S}\Big(\sigma,\coprod_h \prod_{g'}e^*(\beta)\Big). \]

\noindent
\textbf{4) Non-quantifier-free case.}
What about general elements $\sigma \in \mE_S$? Consider an arbitrary $\sigma \in \mE_S$. Since $\mp$ has enough $(\coprod,\cF)$-quantifier-free elements, there exists a \emph{covering} $(\coprod,\cF)$-quantifier-free element $\widehat{\sigma}$ for $\sigma$, \ie, there exist an object $\widehat{S} \in \mB$, a morphism $s \colon \widehat{S} \fibarr S$ in $\cF$, and an object $\widehat{\sigma} \in \mE_{\widehat{S}}$ such that $\sigma \cong \coprod_s(\widehat{\sigma})$.

We want to lift our previous proof to the general case.

Consider the induced diagram, where all the squares involved are pullbacks:
\[\begin{tikzcd}
	{\widehat{B}} && {\widehat{Z}} && {\widehat{X}} \\
	& B && Z && X \\
	&& {\widehat{A}} && {\widehat{S}} \\
	&&& A && S
	\arrow[two heads, "{\widehat{h'}}"{description, pos=0.75}, from=1-3, to=3-3]
	\arrow[two heads, "\widehat{g}"{pos=0.8}, from=3-3, to=3-5]
	\arrow[two heads, "{\widehat{g'}}"{pos=0.4}, from=1-3, to=1-5]
	\arrow[two heads, "{\widehat{h}}"{description, pos=0.75}, from=1-5, to=3-5]
	\arrow[two heads, "g"{description}{pos=0.8}, from=4-4, to=4-6]
	\arrow[two heads, "h"{description, pos=0.45}, from=2-6, to=4-6]
	\arrow["{\widehat{e}}"{description, pos=0.6}, from=1-3, to=1-1]
	\arrow[two heads, "\widehat{f}"', curve={height=18pt}, from=1-1, to=3-3]
	\arrow[two heads, "{r''}", from=1-1, to=2-2]
	\arrow[two heads, "{s'}", from=3-3, to=4-4]
	\arrow[two heads, "r", from=1-5, to=2-6]
	\arrow[two heads, "s", from=3-5, to=4-6]
	\arrow["\lrcorner"{anchor=center, pos=0.125}, shift right=2, draw=none, from=1-1, to=4-4]
	\arrow["\lrcorner"{anchor=center, pos=0.125}, shift right=2, draw=none, from=2-4, to=4-6]
	\arrow["\lrcorner"{anchor=center, pos=0.125, rotate=45}, draw=none, from=3-3, to=4-6]
	\arrow["\lrcorner"{anchor=center, pos=0.125, rotate=-45}, draw=none, from=1-5, to=4-6]
	\arrow["\lrcorner"{anchor=center, pos=0.125}, shift right=3, draw=none, from=1-3, to=4-6]
	\arrow["\lrcorner"{anchor=center, pos=0.125}, shift left=3, draw=none, from=1-3, to=4-6]
 	\arrow[two heads, "{h'}"{description, pos=0.45}, from=2-4, to=4-4, crossing over]
  	\arrow[two heads, "{r'}"'{pos=0.2
   }, from=1-3, to=2-4]
   	\arrow[two heads, "{g'}"{pos=0.2}, from=2-4, to=2-6, crossing over]
    \arrow["e"{description, pos=0.7}, from=2-4, to=2-2, crossing over]
    \arrow[two heads, "f"', curve={height=24pt}, from=2-2, to=4-4, crossing over]
\end{tikzcd}\]

Via adjointness and Beck--Chevalley conditions we can reduce the general case to the quantifier-free case as follows:

 \begin{prooftree}
        \AxiomC{$g^*\sigma \stackrel{\text{(def.)}}{\cong} g^*(\coprod_s \widehat{\sigma}) \stackrel{\text{(BCC)}}{\cong}  \coprod_{s'} (\widehat{g})^*(\widehat{\sigma}) \rightsquigarrow \coprod_f(\beta)$}
        \RightLabel{(adj.)}
        \UnaryInfC{$\widehat{g}^* \widehat{\sigma}  \rightsquigarrow (s')^* \coprod_f(\beta) \stackrel{\text{(BCC)}}{\cong} \coprod_{\widehat{f}} \underbrace{(r'')^*(\beta)}_{=: \widehat{\beta}}$}
        \RightLabel{(Skolem)}
        \UnaryInfC{$\widehat{\sigma} \rightsquigarrow \coprod_{\widehat{h}} \prod_{\widehat{g'}} (\widehat{e})^*(\widehat{\beta})$}
 \end{prooftree}

On the other hand, we also find:
\begin{prooftree}
        \AxiomC{$\sigma \stackrel{\text{(def.)}}{\cong} \coprod_s \widehat{\sigma} \rightsquigarrow \coprod_h \prod_{g'} e^* \beta$}
        \RightLabel{(adj.)}
        \UnaryInfC{$\widehat{\sigma} \rightsquigarrow (s^*\coprod_h) \prod_{g'} e^* \beta \stackrel{\text{(BCC)}}{\cong} \coprod_{\widehat{h}} (r^* \prod_{g'}) e^*(\beta) \stackrel{\text{(BCC)}}{\cong}  \coprod_{\widehat{h}} (\prod_{\widehat{g'}} (r')^*) e^* \beta \stackrel{\text{(BCC)}}{\cong}
        \coprod_{\widehat{h}}  \prod_{\widehat{g'}} (\widehat{e})^* \underbrace{(r'')^*(\beta)}_{\stackrel{\text{(def.)}}{=} \widehat{\beta}}
        $}
 \end{prooftree}

\end{proof}

\begin{remark}\label{rem:loc-vs-glob-iso}
  Notice that the previous proof allows us to demonstrate the validity of Skolemisation only in the \emph{local} case. This result highlights and corrects an inaccuracy in the proof of the corresponding result, claiming the validity of the iso in the global case, presented in \cite[Proposition~2.8]{trotta_et_al:LIPIcs.MFCS.2021.87} for the non-dependent case. It is natural to question whether this result extends to the global case. The main challenge in obtaining a global \emph{natural isomorphism} lies in the fact that the current notion of having enough quantifier-free elements does not provide a \emph{canonical choice of witnesses}. Specifically, given an element of a fibre, there may be multiple quantifier-free elements representing that element. A potential solution could involve the imposition of the existence of a canonical element, accompanied by appropriate coherence conditions relating these canonical elements.
\end{remark}

\begin{corollary}
        In the case of posetal fibrations, the previous ``local'' isomorphisms do assemble to a natural (``global'') isomorphism.
\end{corollary}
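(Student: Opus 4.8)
The plan is to exploit the defining feature of a posetal fibration: each fibre is a poset, so that any hom-set $\mE_A(x,y)$ contains at most one element. Under this hypothesis the two assignments $F,G\colon \mE_B \to \mE_S$ given by $F(\beta) := \prod_g \coprod_f(\beta)$ and $G(\beta) := \coprod_h \prod_{g'} e^*(\beta)$ are monotone maps of posets, being composites of the monotone functors $\coprod_{(-)}$, $\prod_{(-)}$, and the reindexing functors supplied by the dependent Skolem structure. By the Dependent Skolemisation theorem, for each $\beta \in \mE_B$ there is an isomorphism $F(\beta) \cong G(\beta)$ in $\mE_S$; since $\mE_S$ is a poset, this isomorphism is moreover \emph{unique}, so that the family $\{F(\beta) \cong G(\beta)\}_{\beta \in \mE_B}$ is canonically and unambiguously determined.

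It then remains only to verify naturality. Given a vertical map $\phi\colon \beta \to \beta'$ in $\mE_B$, functoriality of $F$ and $G$ yields vertical maps $F(\phi)\colon F(\beta) \to F(\beta')$ and $G(\phi)\colon G(\beta) \to G(\beta')$ in $\mE_S$, and one must check that the square they form together with the local isomorphisms at $\beta$ and $\beta'$ commutes. But all four vertices of this square lie in the poset $\mE_S$, and both composites around it are morphisms $F(\beta) \to G(\beta')$; since $\mE_S(F(\beta),G(\beta'))$ has at most one element, the two composites necessarily coincide, so the square commutes automatically. Hence the local isomorphisms assemble into a natural transformation $F \Rightarrow G$, which is a natural \emph{iso}morphism because each of its components is invertible.

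The reason this argument succeeds, where the general proof-relevant case discussed in \Cref{rem:loc-vs-glob-iso} does not, is precisely that in the posetal setting there is no genuine choice to be coordinated: the non-canonicity of the sections and witnesses underlying the bijections $\Phi_{\sigma,\beta}$ becomes irrelevant, since any two parallel vertical maps are forced to agree. Thus the only substantive observation is that posetal fibres trivialise the coherence problem flagged in the remark, and no further obstacle arises; there is, in particular, no need to fix a canonical covering $(\cF,\coprod)$-quantifier-free element, as uniqueness of morphisms in each fibre already guarantees the requisite coherence.
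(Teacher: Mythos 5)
Your proof is correct and matches the intended argument: the paper states this corollary without proof precisely because, once the fibres are posets, any two parallel vertical arrows coincide, so the naturality squares for the componentwise Skolemisation isomorphisms $\prod_g \coprod_f(\beta) \cong \coprod_h \prod_{g'} e^*(\beta)$ commute automatically. Your closing observation—that posetal fibres dissolve the coherence problem of non-canonical quantifier-free witnesses raised in the preceding remark—is exactly the right diagnosis of why the obstruction in the proof-relevant case disappears here.
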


\section{Dependent Gödel fibrations}

We can now naturally generalise the notion of Gödel fibration~\cite[Definition~2.9]{trotta_et_al:LIPIcs.MFCS.2021.87} to the dependent case.

\begin{definition}[dependent Gödel fibration]\label{def:goedel-fib}
    Let $\pair{\mB}{\cF}$ be a display map category and $\fibration{\mE}{\mp}{\mE}$ a dependent Skolem fibration over it. It is called a \emph{dependent Gödel fibration} if the full subfibration $\fibration{\mE'}{\mp'}{\mB}$ of $(\cF,\coprod)$-quantifier-free objects has enough $(\cF,\prod)$-quantifier-free objects.
\end{definition}

We shall henceforth drop the attribute ``dependent'' when referring to Skolem or Gödel fibrations in this generalised sense.

We show that dependent Gödel fibrations admit \emph{prenex normal forms}, generalising the existence of a formula $\beta = \beta(x,y,i)$ for each formula $\alpha = \alpha(i)$ such that
\[ \alpha(i) \equiv \exists x \forall y \beta(x,y,i). \]
The statement and proof are generalised from~\cite[Proposition~2.11]{trotta_et_al:LIPIcs.MFCS.2021.87}.

Note that, as in \cite[Remark~2.10]{trotta_et_al:LIPIcs.MFCS.2021.87}, $(\cF,\prod)$-quantifier-freeness in the fibration $\mp'$ does not necessarily entail $(\cF,\prod)$-quantifier-freeness in the fibration $\mp$, for a dependent Gödel fibration $\mp$.
\begin{example}
    By definition, every (dependent) Hilbert $\epsilon$- and $\tau$-fibration with fibred products and coproducts along $\cF$ (and whose base category has dependent products along $\cF$) is a Gödel fibration.
\end{example}
\subsection{Prenexation}

As in~\cite{trotta_et_al:LIPIcs.MFCS.2021.87}, Gödel fibrations do admit a kind of prenex normal form.

\begin{proposition}[Prenexation]
Let $\fibration{\mE}{\mp}{\mB}$ be a dependent Gödel fibration over a display map category $\pair{\mB}{\cF}$ with units. Then, for every object $A \in \mB$ and every $\alpha \in \mE_A$ there exist display maps an arrow $f : C \fibarr A$ and $g : B \fibarr C$ together with an element $\beta \in \mE_B$ such that
\[ \alpha \cong \coprod_f \prod_g \beta, \]
and $\beta$ is $(\cF,\prod)$-quantifier-free in the subfibration $\mp'$ of $(\cF,\coprod)$-quantifier-free elements of $\mp$.
 \end{proposition}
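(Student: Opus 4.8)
The plan is to combine the two defining properties of a dependent Gödel fibration: that it has enough $(\cF,\coprod)$-quantifier-free objects, and that its subfibration $\mp'$ of $(\cF,\coprod)$-quantifier-free objects has enough $(\cF,\prod)$-quantifier-free objects. Concretely, I would start with an arbitrary $\alpha \in \mE_A$ and apply these two conditions in sequence to peel off a $\coprod$ and then a $\prod$.

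First I would use the fact that $\mp$ has enough $(\cF,\coprod)$-quantifier-free objects: for the given $\alpha \in \mE_A$ there exist an object $C \in \mB$, a display map $f : C \fibarr A$ in $\cF$, and a $(\cF,\coprod)$-quantifier-free object $\gamma \in \mE_C$ such that
\[ \alpha \cong \coprod_f \gamma. \]
By definition, $\gamma$ is an object of the subfibration $\mp'$ of $(\cF,\coprod)$-quantifier-free objects. Next I would invoke the second Gödel condition, namely that $\mp'$ has enough $(\cF,\prod)$-quantifier-free objects: applied to $\gamma \in \mE'_C$, this yields an object $B \in \mB$, a display map $g : B \fibarr C$ in $\cF$, and an object $\beta \in \mE'_B$ that is $(\cF,\prod)$-quantifier-free in $\mp'$, together with an isomorphism
\[ \gamma \cong \prod_g \beta \]
taken in $\mE'_C$. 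Since $\mp'$ is a \emph{full} subfibration and its $\cF$-products are computed as in $\mp$ (by the last clause in the definition of a Skolem fibration, $(\cF,\coprod)$-quantifier-free objects are closed under $\cF$-products, so $\prod_g \beta$ indeed lies in $\mE'_C$ and agrees with the $\prod_g$ computed in $\mE_C$), this isomorphism also holds in $\mE_C$.

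Finally I would splice the two decompositions together. Applying the functor $\coprod_f$ to the isomorphism $\gamma \cong \prod_g \beta$ and composing with the first isomorphism gives
\[ \alpha \cong \coprod_f \gamma \cong \coprod_f \prod_g \beta, \]
which is exactly the claimed prenex normal form, with $f : C \fibarr A$ and $g : B \fibarr C$ both in $\cF$ and $\beta$ being $(\cF,\prod)$-quantifier-free in $\mp'$. The units hypothesis on $\pair{\mB}{\cF}$ guarantees that the relevant adjoints and reindexings behave as expected; in particular it ensures the composite $f g$ (if one wishes to present $\alpha$ over a single display map) lands in $\cF$, although for the statement as given it suffices to keep $f$ and $g$ separate.

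I do not expect a serious obstacle here, since the statement is essentially an unfolding of the two ``enough quantifier-free objects'' conditions composed together. The one point that requires genuine care — and the step I would treat most carefully — is verifying that the inner decomposition $\gamma \cong \prod_g \beta$, which \emph{a priori} is only an isomorphism in the subfibration $\mp'$, transports to a genuine isomorphism in the ambient fibration $\mp$ that is moreover preserved by applying $\coprod_f$. This relies crucially on $\mp'$ being a full subfibration whose $\cF$-products coincide with those of $\mp$ (the stability of $(\cF,\coprod)$-quantifier-free objects under $\cF$-products from the Skolem axiom), so that the $\prod_g$ appearing in $\mp'$ is literally the same object as in $\mp$. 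Once this compatibility is in place, functoriality of $\coprod_f$ and transitivity of isomorphism close the argument.
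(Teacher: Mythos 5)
Your proposal is correct and follows essentially the same route as the paper's proof: apply the ``enough $(\cF,\coprod)$-quantifier-free objects'' condition to get $\alpha \cong \coprod_f \gamma$, then apply the ``enough $(\cF,\prod)$-quantifier-free objects'' condition in the subfibration $\mp'$ to get $\gamma \cong \prod_g \beta$, and splice the two isomorphisms. Your additional verification that $\prod_g$ computed in $\mp'$ agrees with $\prod_g$ in $\mp$ (via the Skolem closure axiom and fullness of $\mp'$) is a point the paper leaves implicit, so it is a welcome extra precision rather than a deviation.
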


\begin{proof}
    Let $\alpha \in \mE_A$. Since $\mp$ is a dependent Gödel fibration, it is a Skolem fibration. Thus, $\mp$ has enough $(\cF,\coprod)$-quantifier-free objects, and in particular there exist an arrow $f : C \fibarr A$ and an element $\gamma \in \mE_\gamma$ with $\alpha \cong \coprod_f \gamma$. Then, since the full subfibration $\mp'$ of $(\cF,\coprod)$-quantifier-free elements has enough $(\cF,\prod)$-quantifier-free elements, there exists a further map $g : B \to C$ together with a $(\cF,\prod)$-quantifier-free element $\beta$ of $\mp'$ in $\mE_B$ such that $\gamma \cong \prod_g \beta$, hence $\alpha \cong \coprod_f \prod_g \beta$, as desired.
\end{proof}

\begin{remark}
    Again, these isomorphisms between individual hom-sets do not in general assemble to give a natural isomorphism between functors because of the lack of naturality of choice of covering quantifier-free elements, see~\Cref{rem:loc-vs-glob-iso}.
\end{remark}

\subsection{Characterisation as Dialectica fibrations}

We will now prove that Dialectica fibrations are the same as Gödel fibrations (up to fibred equivalence). Like in~\cite{trotta_et_al:LIPIcs.MFCS.2021.87}, we show this in a modular fashion that makes use of a deeper analysis of the coproduct completion, and the definition of the product completion in terms of the coproduct completion and the opposite of a fibration.

The roadmap is as follows. We will first establish a few technical results that finally enable us to show that the coproduct completion has enough $\coprod$-quantifier-free-elements. Furthermore, we show that a fibration is an instance of a coproduct completion if and only it has enough $\coprod$-quantifier-free-elements. Putting all the results together will then yield the main theorem, that a fibration is Gödel if and only if it is (fibred isomorphic to) the Dialectica construction of some fibration. Moreover, one can exhibit this fibration as the full subfibration of $\prod$-quantifier-free elements. 

In the following, we will always consider a fixed display map category $\pair{\mB}{\cF}$ with units and $\cF$-coproducts. We will not explicitly mention these conditions anymore.

We furst give a description of the unit of the adjunction of the cocartesian and cartesian transport along display maps in the coproduct completion of a fibration.

\begin{proposition}\label{prop:sigma-compl-unit}
 Let $\fibration{\mE}{\mp}{\mB}$ be a fibration over a display map category $\pair{\mB}{\cF}$ with units and $\cF$-dependent coproducts. Let us consider the $(\cF,\coprod)$-completion $\fibration{\Sigma_\cF(\mE)}{\Sigma_\cF(\mp)}{B}$. Consider an arrow $u : A´ \fibarr J$ in $\cF$.
  For $A \in \mB$, let $\sigma = (A, g : B \fibarr A, \beta \in \mE_B) \in (\Sigma_\cF(\mE))_A$. Then the unit $\kappa : \id \Rightarrow u^* \coprod_u$ of the adjunction $\coprod_u \dashv u^*$ of $\Sigma_\cF(\mp)$ at $\sigma$ is given by
 \[ \kappa_\sigma = \big( (g,\id_B) : g \to u^*(ug), \phi_{g,\beta} : \beta \to (u')^* \beta \big),\]
 where $\phi'$ is the $\mp$-cartesian filler as given in:
\[\begin{tikzcd}
	\beta \\
	& {u'^*\beta} & \beta \\
	B \\
	& {u^*B} & B
	\arrow["{\phi_{g,\beta}}"', dashed, from=1-1, to=2-2]
	\arrow[Rightarrow, no head, from=1-1, to=2-3]
	\arrow[Rightarrow, dotted, crossing over, no head, from=1-1, to=3-1]
	\arrow[from=2-2, to=2-3, cart]
	\arrow[Rightarrow, dotted, no head, from=2-2, to=4-2]
	\arrow[Rightarrow, dotted, no head, from=2-3, to=4-3]
	\arrow["{(g,\id_B)}"', from=3-1, to=4-2]
	\arrow[Rightarrow, no head, from=3-1, to=4-3]
	\arrow["{u'}"', from=4-2, to=4-3]
\end{tikzcd}\]

\end{proposition}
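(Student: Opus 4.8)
The plan is to exploit that $\Sigma_\cF(\mp)$ is a bifibration along the maps of $\cF$: the cocartesian transport is $\coprod_u$ (postcomposition of the display map with $u$, leaving the fibre datum $\beta$ untouched), while the cartesian transport $u^*$ is reindexing, computed by pulling back the display map and then reindexing in $\mE$. I would then invoke the standard description of the unit of an adjunction $\coprod_u \dashv u^*$ in a bifibration: $\kappa_\sigma\colon \sigma \vertarr u^*\coprod_u\sigma$ is the unique vertical arrow factoring the cocartesian lift through the cartesian lift, i.e.\ the unique vertical $\kappa_\sigma$ with
\[ \overline{u}(\coprod_u\sigma)\circ \kappa_\sigma = \zeta_\sigma, \]
where $\zeta_\sigma\colon \sigma \cocartarr \coprod_u\sigma$ is the cocartesian lift of $u$ at $\sigma$ and $\overline{u}(\coprod_u\sigma)\colon u^*\coprod_u\sigma \cartarr \coprod_u\sigma$ is the chosen cartesian lift (the factorization is vertical since both $\zeta_\sigma$ and $\overline{u}(\coprod_u\sigma)$ lie over $u$). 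It thus suffices to identify these two lifts explicitly and read off the factorization.

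First I would record the two lifts. Since $\coprod_u\sigma = (ug\colon B \fibarr J, \beta)$ arises from $\sigma = (g\colon B \fibarr A, \beta)$ by merely postcomposing the display map with $u$, the cocartesian lift is $\zeta_\sigma = (u, \id_B, \id_\beta)$, namely the identity on $\beta$ sitting over the base square with top edge $\id_B$ and bottom edge $u$. For the cartesian lift, reindexing $\coprod_u\sigma$ along $u$ is computed by pulling the display map $ug$ back along $u$, giving $u^*B := A\times_J B$ with projections $\pi\colon u^*B \fibarr A$ and $u'\colon u^*B \to B$, and then reindexing $\beta$ along $u'$; hence
\[ u^*\coprod_u\sigma = (\pi\colon u^*B \fibarr A,\ u'^*\beta), \qquad \overline{u}(\coprod_u\sigma) = (u, u', \overline{u'}(\beta)), \]
where $\overline{u'}(\beta)\colon u'^*\beta \cartarr \beta$ is the cartesian lift in $\mE$. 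Here $ug \in \cF$ by closure of $\cF$ under composition and $\pi \in \cF$ by closure under pullback, so all the objects involved are genuine objects of $\Sigma_\cF(\mE)$.

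Finally I would compute the factorization componentwise, which is where the two governing universal properties enter. Writing $\kappa_\sigma = (\id_A, \kappa_1, \kappa')$, the equation $\overline{u}(\coprod_u\sigma)\circ\kappa_\sigma = \zeta_\sigma$ unwinds, on base maps, to $\pi\kappa_1 = g$ (verticality over $A$) together with $u'\kappa_1 = \id_B$ (matching the top edge of $\zeta_\sigma$); by the universal property of the pullback $u^*B = A\times_J B$ these two conditions force $\kappa_1 = (g,\id_B)$, the comparison map into the pullback. On fibre data the equation becomes $\overline{u'}(\beta)\circ\kappa' = \id_\beta$ with $\kappa'\colon \beta \to u'^*\beta$ required to lie over $\kappa_1$; since $u'\kappa_1 = \id_B$ and $\overline{u'}(\beta)$ is cartesian over $u'$, the universal property of this cartesian lift yields a unique such $\kappa'$, which is exactly the filler $\phi_{g,\beta}$ of the statement. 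This gives $\kappa_\sigma = (\id_A, (g,\id_B), \phi_{g,\beta})$, and naturality in $\sigma$ is then automatic, being the naturality of the unit. The only genuinely delicate point is the bookkeeping in the total category $\Sigma_\cF(\mE)$: one must correctly recognise the cartesian lift of $\Sigma_\cF(\mp)$ as assembled from a pullback square in $\mB$ and a cartesian lift in $\mE$ — which is legitimate precisely because $\Sigma_\cF(\mp)$ is the pullback of $\mathsf{dom}\colon \cF^\to \to \mB$ along $\mp$, so that cartesianness is tested componentwise — and then compose the triples $(f_0,f_1,\phi)$ slotwise. Once this composition is set up, the entire verification collapses to the pullback and cartesian universal properties.
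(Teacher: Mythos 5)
Your proposal is correct and follows essentially the same route as the paper: the paper first writes down the general adjoint transpose $\Psi_{\sigma,\tau}$ (as the unique vertical filler between the cocartesian and cartesian lifts) and then obtains the unit as $\Psi_{\sigma,\coprod_u\sigma}(\id_{\coprod_u\sigma})$, which is precisely your characterisation of $\kappa_\sigma$ as the unique vertical factorisation of the cocartesian lift through the cartesian lift. Your componentwise identification — the pullback gap map $(g,\id_B)$ in the base and the cartesian filler $\phi_{g,\beta}$ in the fibre — matches the paper's computation exactly.
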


\begin{proof}
    We recall the following: let $\pair{\mB}{\cF}$ be a display map category with units and coproducts. Let $\fibration{\mathsf{F}}{q}{\mB}$ be a fibration  and $u : B \fibarr J$ be an arrow in $\cF$. Consider the adjunction $\coprod_u \dashv u^*$. Let $\alpha \in \mathsf{F}_A$ and $\beta \in \mathsf{F}_B$. The transpose of a vertical map $\psi:\coprod_u \alpha \vertarr \beta$ is given by the vertical map $\psi'$ arising as the unique filler to the cartesian map as below:
\[\begin{tikzcd}
	\alpha && {\coprod_u\alpha} \\
	{u^*\beta} && \beta
	\arrow[from=1-1, to=1-3, cocart]
	\arrow["{\psi'}"', dashed, from=1-1, to=2-1, squiggly]
	\arrow[from=2-1, to=2-3, cart]
	\arrow["\psi", from=1-3, to=2-3, squiggly]
	\arrow[from=1-1, to=2-3]
\end{tikzcd}\]

Hence, in the case of the $(\cF,\coprod)$-completion, the transposing map for this adjunction is given, for $\sigma = (A, g: B \fibarr A, \beta \in \mE_B)$ and $\tau = (J, h:C \fibarr J, \gamma \in \mE_C)$, by the map $\Psi_{\sigma,\tau} \colon \hom_{(\Sigma_\cF E)_{J}}(\coprod_u \sigma, \tau) \to \hom_{(\Sigma_\cF E)_{A}}(\sigma, u^*\tau)$, which maps a pair $(k,\phi)$ to $(k',\phi')$ as indicated in:
\[\begin{tikzcd}
	{\beta} && \gamma & \beta && {u'^*\gamma} \\
	B && C & B && {u^*C} \\
	& J &&& A
	\arrow["k", from=2-1, to=2-3]
	\arrow["\phi", from=1-1, to=1-3]
	\arrow["ug"', two heads, from=2-1, to=3-2]
	\arrow["h", two heads, from=2-3, to=3-2]
	\arrow["{k'}", from=2-4, to=2-6]
	\arrow["g"', two heads, from=2-4, to=3-5]
	\arrow["{u^*h}", two heads, from=2-6, to=3-5]
	\arrow["{\phi'}", from=1-4, to=1-6]
	\arrow["{\Psi_{\sigma,\tau}}", curve={height=-12pt}, maps to, from=2-3, to=2-4]
\end{tikzcd}\]

Here, $k' = (g,k):B \to u^*C = A \times_J C$ is the cartesian gap map:
\[\begin{tikzcd}
	B &&& B \\
	& {u^*C} &&& C \\
	A &&& J
	\arrow[Rightarrow, no head, from=1-1, to=1-4]
	\arrow["{(g,k)}"{description}, dashed, from=1-1, to=2-2]
	\arrow["g"', two heads, from=1-1, to=3-1]
	\arrow[two heads, from=2-2, to=3-1, "h'"]
	\arrow["k"{description}, from=1-4, to=2-5]
	\arrow[two heads, "h", from=2-5, to=3-4]
	\arrow[two heads, "ug"{pos=0.3}, from=1-4, to=3-4, swap]
	\arrow["u"{description}, two heads, from=3-1, to=3-4]
    \arrow["u'"{description, pos=0.4}, from=2-2, to=2-5, crossing over, two heads]
	\arrow["\lrcorner"{anchor=center, pos=0.125}, draw=none, from=2-2, to=3-4]
\end{tikzcd}\]
Furthermore, $\phi':\beta \to u'^*\gamma$ is the filler in the fibred square lying over the top square of the above diagram:
\[\begin{tikzcd}
	\beta && {\beta} \\
	{u'^*\gamma} && \gamma
	\arrow[from=1-1, to=1-3, Rightarrow, no head]
	\arrow["{\phi'}"', dashed, from=1-1, to=2-1]
	\arrow[from=2-1, to=2-3, cart]
	\arrow["\phi", from=1-3, to=2-3]
	\arrow[from=1-1, to=2-3]
\end{tikzcd}\]
Accordingly, the unit at $\sigma$ is given by
\[\kappa_\sigma := \Psi_{\sigma,\coprod_u \sigma}(\id_{\coprod_u \sigma}) = \big((g,\id_B): g \to u^*(ug), \phi': \beta \to u'^* \beta\big)\]
with $u' := (ug)^*u$, and $\phi' := \phi_{g,\beta}$ being the unique section of the cartesian lift $\mp^*(u',\beta) : (u')^*\beta \cartarr \beta$ such that with $\mp(\phi') = (g,\id_B)$.
\end{proof}

We now show that elements given by identity arrows together with some element in the fibre are $(\cF,\coprod)$-quantifier free elements in $\Sigma_\cF(\mp)$.

\begin{proposition}\label{prop:qf-free-elts-of-sigma}
    Let $\fibration{\mE}{\mp}{\mB}$ be a fibration over a display map category $\pair{\mB}{\cF}$ with units and $\cF$-dependent coproducts. Let us consider the $(\cF,\coprod)$-completion $\fibration{\Sigma_\cF(\mE)}{\Sigma_\cF(\mp)}{B}$. Let $I$ be an object of $\mB$ and $\alpha$ be an object of its fibre $\mE_I$. Then the object
    \[I_\alpha := (\id_I:I\tofib I, \alpha \in \mE_I)\] in the fibre $\big(\Sigma_\cF(\mE)\big)_I$ is an $(\cF,\coprod)$-quantifier-free element of $\Sigma_\cF(\mp)$.
\end{proposition}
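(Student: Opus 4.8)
The plan is to reduce the statement to a single cleaner claim and then verify a universal property by hand. First I would compute the reindexing of $I_\alpha$ along an arbitrary $f\colon K\to I$ in $\mB$. Recall that reindexing in $\Sigma_\cF(\mE)$ is computed componentwise: the codomain part pulls back the display map, and the $\mE$-part reindexes along the top edge of that pullback. Since $\id_I$ pulls back along $f$ to $\id_K$ (the pullback of an identity being trivial), with the top edge being $f$ itself, we obtain $f^*I_\alpha\cong(\id_K\colon K\twoheadrightarrow K,\,f^*\alpha)=:K_{f^*\alpha}$, which is again of the same shape. By \Cref{def:gen-coprod-quant-free} it therefore suffices to prove the claim that \emph{for every $K\in\mB$ and $\gamma\in\mE_K$, the object $K_\gamma=(\id_K,\gamma)$ is $(\cF,\coprod)$-quantifier splitting}; applying this to $\gamma=f^*\alpha$ for all $f$ then shows that every reindexing of $I_\alpha$ splits, which is exactly $(\cF,\coprod)$-quantifier-freeness of $I_\alpha$.

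To prove the claim I would unfold \Cref{def:gen-coprod-quant-splitting}. Fix $\tau=(q\colon C\twoheadrightarrow B,\,\delta\in\mE_C)\in\big(\Sigma_\cF(\mE)\big)_B$, a display map $u\colon B\twoheadrightarrow K$, and a vertical $h\colon K_\gamma\vertarr\coprod_u\tau$. Using closure of $\cF$ under composition, $\coprod_u\tau=(uq\colon C\twoheadrightarrow K,\,\delta)$, so that $h$ unwinds, by the description of morphisms in $\Sigma_\cF(\mE)$ together with the cartesian factorisation in $\mE$, to a section $f_1\colon K\to C$ of $uq$ and a vertical map $\phi'\colon\gamma\vertarr f_1^*\delta$ in $\mE_K$. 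I would then produce the required data: set $s:=q\circ f_1\colon K\to B$, so that $us=uqf_1=\id_K$, i.e.\ $s$ is a section of $u$; form the pullback $s^*\tau=(s^*q\colon s^*C\twoheadrightarrow K,\,\bar s^*\delta)$ with top edge $\bar s\colon s^*C\to C$; and let $t\colon K\to s^*C$ be the gap map determined by $\id_K$ and $f_1$ (legitimate since $s\cdot\id_K=qf_1$), so that $s^*q\cdot t=\id_K$ and $\bar s\cdot t=f_1$. As $t^*\bar s^*\delta=(\bar st)^*\delta=f_1^*\delta$, the map $\phi'$ defines a map $\gamma\to\bar s^*\delta$ over $t$, and these assemble into a vertical $\bar h:=(\id_K,t,\phi')\colon K_\gamma\vertarr s^*\tau$.

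The crux is verifying the factorisation equation $h=s^*\eta_\tau\circ\bar h$ of \eqref{eq:gen-coprod-quant-free}, and this is the step I expect to be the main obstacle. Here I would invoke the explicit unit from \Cref{prop:sigma-compl-unit}: $\eta_\tau$ has base component the gap map $(q,\id_C)\colon C\to u^*C$ and an essentially identical fibre component. Reindexing along $s$ and using the canonical isomorphism $s^*u^*\cong(us)^*=(\id_K)^*\cong\id$, the base component of $s^*\eta_\tau$ becomes exactly $\bar s\colon s^*C\to C$; since morphisms of $\Sigma_\cF(\mE)$ compose componentwise, the composite's base component is $\bar s\cdot t=f_1$, matching $h$. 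Because the fibre component of the unit is the cartesian identity-filler, the composite's fibre component reduces, via $t^*\bar s^*=f_1^*$, to $\phi'$, again matching $h$. The delicate bookkeeping lies precisely in tracking these pullbacks and the suppressed cartesian and Beck--Chevalley isomorphisms.

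Finally I would check uniqueness. Given any section $s'$ of $u$ and vertical $\bar h'\colon K_\gamma\vertarr(s')^*\tau$ with $h=(s')^*\eta_\tau\circ\bar h'$, comparison of base components gives $\overline{s'}\cdot t'=f_1$ where $t'$ is a section of $(s')^*q$; then the pullback identity $q\cdot\overline{s'}=s'\cdot(s')^*q$ forces $s'=qf_1=s$, and the universal property of $s^*C$ pins down $t'=t$. Comparison of fibre components recovers $\phi'$, so $\bar h'=\bar h$. This establishes that $K_\gamma$ is $(\cF,\coprod)$-quantifier splitting, and hence, by the reduction of the first paragraph, that $I_\alpha$ is $(\cF,\coprod)$-quantifier-free.
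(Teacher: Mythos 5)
Your proposal is correct and takes essentially the same route as the paper's proof: both reduce the statement to showing that every element of the form $(\id_K,\gamma)$ is $(\cF,\coprod)$-quantifier splitting, unwind a vertical map into $\coprod_u\tau$ to extract the candidate section $s=q\circ f_1$ together with the gap map into the pullback $s^*C$, verify the factorisation against the explicit description of the unit from \Cref{prop:sigma-compl-unit}, and deduce uniqueness from the universal property of that pullback. The only difference is organisational: you state the reduction $f^*I_\alpha\cong K_{f^*\alpha}$ explicitly (and, correctly, for arbitrary $f$ in $\mB$ as \Cref{def:gen-coprod-quant-free} demands), whereas the paper performs the same reduction implicitly.
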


\begin{proof}
We have to show that for all $f:J \fibarr I$ in $\cF$, the element $f^*I_\alpha 
= \big(J, \id_J, f^*(\alpha) \big) \in ({\Sigma_\cF} \mE)_J$ is an $(\cF,\coprod)$-quantifier-free splitting. For this, we are to show the following: given any $\sigma = (A,g:B \fibarr A, \beta \in \mE_B) \in \big(\Sigma_\cF (\mE)\big)_A$ together with $u:A \fibarr J$ and $\Phi \in \hom_{(\Sigma_\cF E)_{J}}(f^*I_\alpha, \coprod_u \sigma)$, there uniquely exist a section $s:J \to A$ of $u$ and a vertical arrow $\overline{\Phi}: f^*I_\alpha \vertarr s^*\sigma$ such that $\Phi = s^*\kappa_\sigma \circ \overline{\Phi}$, where $\kappa : \id_{(\Sigma \mE)_A} \Rightarrow u^*\coprod_u$ is the unit of the adjunction $u^* \vdash \coprod_u : \big(\Sigma_\cF (\mE)\big)_A \to \big(\Sigma_\cF (\mE)\big)_J$.

By construction of the $(\cF,\coprod)$-completion, any vertical morphism $\Phi$ as above takes the form of $\Phi = \big( (\id_J, r), \phi\big)$ for some $r:J \to B$ in $\mB$ (not necessarily in $\cF$) and $\phi:f^*\alpha \to \beta$ with $\mp(\phi) = r$, where furthermore $u(gr) = \id_J$. Hence, we set $s:=gr:J \to A$ as the candidate section of $u$.
 
The desired factorization demands $\Phi = s^*\kappa_\sigma \circ \overline{\Phi}$, vertically over $J$. Note that, using \Cref{prop:sigma-compl-unit}, $s^*$ acts on $\kappa_\sigma$ as follows:
\[\begin{tikzcd}
	{(g^*s)^*\beta} && \beta \\
	& {(s')^*(u')^*\beta} && {(u')^*\beta} \\
	{s^*B} && B \\
	& B && {u^*B} \\
	& J && A
	\arrow[from=1-1, to=1-3, cart]
	\arrow["{\phi''}", dashed, from=1-1, to=2-2]
	\arrow[Rightarrow, dotted, no head, from=1-1, to=3-1]
	\arrow["{\phi'}", from=1-3, to=2-4]
	\arrow[Rightarrow, dotted, no head, from=1-3, to=3-3]
	\arrow[from=2-2, to=2-4, cart]
	\arrow[Rightarrow, dotted, no head, from=2-2, to=4-2]
	\arrow[Rightarrow, dotted, no head, from=2-4, to=4-4]
	\arrow["{s''}", dashed, from=3-1, to=4-2]
	\arrow["{s^*g}"{description}, curve={height=12pt}, from=3-1, to=5-2, two heads]
	\arrow["\lrcorner"{anchor=center, pos=0.125}, shift right=4, draw=none, from=3-1, to=5-4]
	\arrow["{(g,B)}", from=3-3, to=4-4]
	\arrow["g"{description, pos=0.2}, curve={height=12pt}, from=3-3, to=5-4, two heads]
	\arrow["ug", two heads, from=4-2, to=5-2]
	\arrow["\lrcorner"{anchor=center, pos=0.125}, draw=none, from=4-2, to=5-4]
	\arrow["{u^*(ug)}", two heads, from=4-4, to=5-4]
	\arrow["s"{description, pos=0.4}, from=5-2, to=5-4]
 	\arrow["{s'}"{description, pos=0.4}, from=4-2, to=4-4, crossing over]
  	\arrow["{g^*s}", from=3-1, to=3-3, crossing over]
\end{tikzcd}\]
But to the right we have the following pasted squares:
\[\begin{tikzcd}
	{(g^*s)^*\beta} && \beta \\
	& {(s')^*(u')^*\beta} && {(u')^*\beta} && \beta \\
	{s^*B} && B \\
	& B && {u^*B} && B \\
	& J && A && J
	\arrow[from=1-1, to=1-3, cart]
	\arrow["{\phi''}", dashed, from=1-1, to=2-2]
	\arrow[Rightarrow, dotted, no head, from=1-1, to=3-1]
	\arrow["{\phi'}", dashed, from=1-3, to=2-4]
	\arrow[curve={height=-12pt}, Rightarrow, no head, from=1-3, to=2-6]
	\arrow[Rightarrow, dotted, no head, from=1-3, to=3-3]
	\arrow[from=2-2, to=2-4, cart]
	\arrow[Rightarrow, dotted, no head, from=2-2, to=4-2]
	\arrow[from=2-4, to=2-6, cart]
	\arrow[Rightarrow, dotted, no head, from=2-4, to=4-4]
	\arrow[Rightarrow, dotted, no head, from=2-6, to=4-6]
	\arrow["{g^*s}", from=3-1, to=3-3]
	\arrow["{s''}", dashed, from=3-1, to=4-2]
	\arrow["{s^*g}"{description}, curve={height=12pt}, two heads, from=3-1, to=5-2]
	\arrow["\lrcorner"{anchor=center, pos=0.125}, shift right=4, draw=none, from=3-1, to=5-4]
	\arrow["{(g,B)}", from=3-3, to=4-4]
	\arrow[curve={height=-12pt}, Rightarrow, no head, from=3-3, to=4-6]
	\arrow["g"{description, pos=0.2}, curve={height=12pt}, two heads, from=3-3, to=5-4]
	\arrow["ug", two heads, from=4-2, to=5-2]
	\arrow["\lrcorner"{anchor=center, pos=0.125}, draw=none, from=4-2, to=5-4]
	\arrow["{u'}", from=4-4, to=4-6, two heads]
	\arrow["{u^*(ug)}", two heads, from=4-4, to=5-4]
	\arrow["\lrcorner"{anchor=center, pos=0.125}, draw=none, from=4-4, to=5-6]
	\arrow["ug", from=4-6, to=5-6, two heads]
	\arrow["s"{description, pos=0.4}, from=5-2, to=5-4]
	\arrow["u", from=5-4, to=5-6, two heads]
 \arrow["{s'}"{description, pos=0.4}, two heads, from=4-2, to=4-4, crossing over]
\end{tikzcd}\]
Since $us=\id_J$, we get $u's' \cong \id_B$. Now, as identities pull back to identities, composing yields (up to isomorphism) the following pasted diagrams:
\[\begin{tikzcd}
	{(g^*s)^*\beta} \\
	& \beta && \beta \\
	{s^*B} \\
	& { B} && B \\
	& J && J
	\arrow["{\phi''}"', dashed, from=1-1, to=2-2]
	\arrow[from=1-1, to=2-4, cart]
	\arrow[Rightarrow, dotted, no head, from=1-1, to=3-1]
	\arrow[Rightarrow, no head, from=2-2, to=2-4]
	\arrow[Rightarrow, dotted, no head, from=2-2, to=4-2]
	\arrow[Rightarrow, dotted, no head, from=2-4, to=4-4]
	\arrow["{s''}", dashed, from=3-1, to=4-2]
	\arrow["{g^*s}", curve={height=-12pt}, from=3-1, to=4-4]
	\arrow["{s^*g}"', curve={height=12pt}, from=3-1, to=5-2, two heads]
	\arrow[Rightarrow, no head, from=4-2, to=4-4]
	\arrow["ug"', two heads, from=4-2, to=5-2]
	\arrow["\lrcorner"{anchor=center, pos=0.125}, draw=none, from=4-2, to=5-4]
	\arrow["ug", two heads, from=4-4, to=5-4]
	\arrow[Rightarrow, no head, from=5-2, to=5-4]
\end{tikzcd}\]
But this means $s'' = g^*s$ and $\phi'' = \mp^*(g^*s, \beta)$.

We claim that $\overline{\Phi} = \big((\id_J,\overline{r}), \overline{\phi}: f^*\alpha \to (s')^*\beta\big)$ is an appropriate candidate, where the maps arise as follows, starting from the pullback in the middle:
\[\begin{tikzcd}
	{f^*\alpha} & {(g^*s)^*\beta} & \beta \\
	J & {s^*B} & B & B \\
	J & J & A & J
	\arrow["{\overline{\phi}}", dashed, from=1-1, to=1-2]
	\arrow["\phi"{pos=0.3}, curve={height=-24pt}, from=1-1, to=1-3]
	\arrow[Rightarrow, dotted, no head, from=1-1, to=2-1]
	\arrow[from=1-2, to=1-3, cart]
	\arrow[Rightarrow, dotted, no head, from=1-2, to=2-2]
	\arrow[Rightarrow, dotted, no head, from=1-3, to=2-3]
	\arrow["{{\overline{r}}}", dashed, from=2-1, to=2-2]
	\arrow["r"{description, pos=0.3}, curve={height=-24pt}, from=2-1, to=2-3]
	\arrow[Rightarrow, no head, from=2-1, to=3-1]
	\arrow["{{g^*s}}", from=2-2, to=2-3]
	\arrow["{{s^*g}}"', two heads, from=2-2, to=3-2]
	\arrow["\lrcorner"{anchor=center, pos=0.125}, draw=none, from=2-2, to=3-3]
	\arrow[Rightarrow, no head, from=2-3, to=2-4]
	\arrow["g", two heads, from=2-3, to=3-3]
	\arrow["ug", two heads, from=2-4, to=3-4]
	\arrow[Rightarrow, no head, from=3-1, to=3-2]
	\arrow["s", from=3-2, to=3-3]
	\arrow[curve={height=18pt}, Rightarrow, no head, from=3-2, to=3-4]
	\arrow["u", from=3-3, to=3-4]
\end{tikzcd}\]
As desired, this gives rise to the factorization:

\[\begin{tikzcd}
	{f^*\alpha} && \beta \\
	& {(g^*s)^*\beta} \\
	J && B \\
	J & {s^*B} & J \\
	& J
	\arrow["{\overline{r}}"{description}, dashed, from=3-1, to=4-2]
	\arrow["{g^*s}"{description}, from=4-2, to=3-3]
	\arrow[Rightarrow, no head, from=3-1, to=4-1]
	\arrow[Rightarrow, no head, from=4-1, to=5-2]
	\arrow["{s^*g}", two heads, from=4-2, to=5-2]
	\arrow["ug", two heads, two heads, from=3-3, to=4-3]
	\arrow[Rightarrow, no head, from=4-3, to=5-2]
	\arrow["r"{description, pos=0.7}, from=3-1, to=3-3]
	\arrow["\phi"{description}, from=1-1, to=1-3]
	\arrow["{\overline{\phi}}"{description}, dashed, from=1-1, to=2-2]
	\arrow[cart, from=2-2, to=1-3]
	\arrow[Rightarrow, dotted, no head, from=1-1, to=3-1]
	\arrow[Rightarrow, dotted, no head, from=2-2, to=4-2]
	\arrow[Rightarrow, dotted, no head, from=1-3, to=3-3]
	\arrow[Rightarrow, no head, from=4-1, to=4-2]
	\arrow[Rightarrow, no head, from=4-2, to=4-3]
\end{tikzcd}\]

As for uniqueness, assume we have another section $q:J \to A$ of $u$, together with a morphism $\Xi = ( m: \id_B \to q^*g, \psi:f^*\alpha \to q^*\beta)$. The action of $q^*$ on the unit gives rise to the induced morphism in:
\[\begin{tikzcd}
	{q^*B} &&& B \\
	& B &&& {A \times_JB} \\
	J &&& A
	\arrow[from=1-1, to=1-4]
	\arrow["{\pi}"{description}, dashed, from=1-1, to=2-2]
	\arrow["{q^*g}"', two heads, from=1-1, to=3-1]
	\arrow["ug"{description}, two heads, from=2-2, to=3-1]
	\arrow["{(g,\id_B)}"{description}, from=1-4, to=2-5]
	\arrow["{u^*(ug)}"{description}, two heads, from=2-5, to=3-4]
	\arrow["g"{pos=0.2}, two heads, from=1-4, to=3-4, swap]
	\arrow["q"{description}, from=3-1, to=3-4]
 \arrow[from=2-2, to=2-5,crossing over]
	\arrow["\lrcorner"{anchor=center, pos=0.125}, draw=none, from=2-2, to=3-4]
	\arrow["\lrcorner"{anchor=center, pos=0.125}, shift left=2, draw=none, from=1-1, to=3-4]
\end{tikzcd}\]
The factorization condition $\Phi = q^*\kappa_\sigma \circ \xi$ then in particular entails the factorization:
\[\begin{tikzcd}
	J && {q^*B} && B \\
	J && J && J
	\arrow["m", from=1-1, to=1-3]
	\arrow["\pi", from=1-3, to=1-5]
	\arrow[Rightarrow, no head, from=1-1, to=2-1]
	\arrow[Rightarrow, no head, from=2-1, to=2-3]
	\arrow[Rightarrow, no head, from=2-3, to=2-5]
	\arrow["ug", from=1-5, to=2-5, two heads]
	\arrow["r"{description}, curve={height=-24pt}, from=1-1, to=1-5]
	\arrow["{q^*g}"', from=1-3, to=2-3,  two heads,]
\end{tikzcd}\]
But then, the map $m$ necessarily occurs as the gap map $(r,\id_J)$ in:
\[\begin{tikzcd}
	J \\
	&& {B \times_A J} && B \\
	&& J && A
	\arrow["{q^*g}"', from=2-3, to=3-3, two heads]
	\arrow["q"', from=3-3, to=3-5]
	\arrow["\pi", from=2-3, to=2-5]
	\arrow["g", from=2-5, to=3-5, two heads]
	\arrow["r"{description}, curve={height=-18pt}, from=1-1, to=2-5]
	\arrow[curve={height=12pt}, Rightarrow, no head, from=1-1, to=3-3]
	\arrow["\lrcorner"{anchor=center, pos=0.125}, draw=none, from=2-3, to=3-5]
	\arrow["m"{description}, dashed, from=1-1, to=2-3]
\end{tikzcd}\]
So $q = gr = s$, and accordingly $m=r'$, as above, and $\xi = \phi'$, \ie, $\Xi = \overline{\Phi}$. 
\end{proof}

\begin{proposition}\label{rem:fact}
    Let $\fibration{\mE}{\mp}{\mB}$ be a fibration over a display map category $\pair{\mB}{\cF}$. Then any vertical morphism $(h,\phi) \colon (I,f\colon A \fibarr I,\alpha) \to (I,g \colon B \fibarr I,\beta)$ in the completion $\Sigma_\cF(\mp)$ factors uniquely, up to isomorphism, as $(I,h,\phi) = \varepsilon_{(I,g,\beta)} \circ \coprod_f(h',\phi')$, where $h' = (\id_A,h) \colon A \to A \times_I B$, and $\phi'$ is the unique filler such that $\phi = \mp^*(g^*f, (g^*f)^*\beta \to \beta) \circ \phi'$.
\end{proposition}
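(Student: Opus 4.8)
The plan is to read this factorization off the adjunction $\coprod_f \dashv f^*$ in $\Sigma_\cF(\mp)$ and then to make the two pieces explicit. First I would note that, since $\pair{\mB}{\cF}$ has units, the identity $\id_A$ is a display map, so $(A, \id_A\colon A \fibarr A, \alpha)$ is a genuine object of $(\Sigma_\cF \mE)_A$, and the coproduct formula gives $\coprod_f(A,\id_A,\alpha) = (I, f, \alpha)$. As $f \in \cF$, the reindexing $f^*$ has the left adjoint $\coprod_f$, so vertical maps out of $(I,f,\alpha) = \coprod_f(A,\id_A,\alpha)$ into $(I,g,\beta)$ correspond bijectively, via $\psi \mapsto \varepsilon_{(I,g,\beta)} \circ \coprod_f(\psi)$, to vertical maps $\psi\colon (A,\id_A,\alpha) \to f^*(I,g,\beta)$ in the fibre over $A$. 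This delivers existence and uniqueness of the factorization at once; the remaining task is to exhibit the transpose $\psi$ in the claimed shape.

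Next I would compute the two ingredients. Using the presentation of $\Sigma_\cF(\mE)$ as a pullback of fibrations, reindexing $(g\colon B \fibarr I, \beta)$ along $f\colon A \to I$ amounts to pulling back $g$ along $f$ and reindexing $\beta$ along the induced map on domains; writing the pullback square with projections $f^*g\colon A\times_I B \fibarr A$ and $g^*f\colon A\times_I B \to B$, this yields
\[ f^*(I,g,\beta) = \big(A,\ f^*g\colon A\times_I B \fibarr A,\ (g^*f)^*\beta\big). \]
Dually to the unit computation in \Cref{prop:sigma-compl-unit}, and using $f\circ f^*g = g\circ g^*f$ so that $\coprod_f f^*(I,g,\beta) = (I,\, g\circ g^*f,\, (g^*f)^*\beta)$, the counit $\varepsilon_{(I,g,\beta)}$ is the vertical map over $I$ whose base component is the projection $g^*f$ and whose fibre component is the cartesian lift $\mp^*\big(g^*f,\, (g^*f)^*\beta \to \beta\big)$.

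Finally I would identify the transpose $\psi = (w,\psi_0)$ of $(h,\phi)$ by matching components in the equation $\varepsilon_{(I,g,\beta)} \circ \coprod_f(\psi) = (h,\phi)$. Since composition in $\Sigma_\cF(\mE)$ composes base components and fibre components separately, the equation forces $g^*f \circ w = h$, while verticality of $\psi$ forces $f^*g \circ w = \id_A$. By the universal property of the pullback $A\times_I B$ --- whose consistency needs exactly $f = gh$, which holds because $(h,\phi)$ is vertical --- these two equations pin down $w$ uniquely as the gap map $h' = (\id_A, h)$. The fibre equation then reads $\mp^*\big(g^*f,\, (g^*f)^*\beta \to \beta\big)\circ \psi_0 = \phi$ with $\psi_0$ lying over $h'$; as $\mp^*(g^*f,\cdot)$ is cartesian over $g^*f$ and $\phi$ lies over $h = g^*f\circ h'$, there is a unique such filler $\psi_0 =: \phi'$. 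Hence $\psi = (h',\phi')$, which is the asserted factorization, the clause ``up to isomorphism'' accounting for the choices of pullback and of cleavage, each canonical up to unique vertical isomorphism.

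I expect the main obstacle to be purely the bookkeeping in the middle step: computing the counit correctly requires keeping the base-category pullback data synchronized with the fibrewise cartesian lifts. This is, however, entirely parallel to the unit computation already carried out in \Cref{prop:sigma-compl-unit}, so no genuinely new difficulty arises.
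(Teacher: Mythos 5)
Your proposal is correct and follows essentially the same route as the paper: both identify $(I,f,\alpha)=\coprod_f(A,\id_A,\alpha)$ and read the factorization $(I,h,\phi)=\varepsilon_{(I,g,\beta)}\circ\coprod_f(h',\phi')$ off the adjunction bijection for $\coprod_f\dashv f^*$ in $\Sigma_\cF(\mp)$, with $(h',\phi')$ the right adjoint transpose. The only cosmetic difference is that you derive the explicit shape of the transpose (gap map $h'=(\id_A,h)$ and cartesian filler $\phi'$) by matching base and fibre components against the counit, whereas the paper asserts this form directly from the pullback description of $f^*(I,g,\beta)$; the content is the same.
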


\begin{proof}
    Let $\fibration{\mE}{\mp}{\mB}$ be a fibration over a display map category $\pair{\mB}{\cF}$. Fix an object $I \in \mB$, and in $\big(\Sigma_\cF(\mp)\big)_I$ consider an arrow $(h,\phi) \colon (I,f \colon A \fibarr I, \alpha \in \mE_A) \to (I, g \colon B \fibarr I, \beta \in \mE_B)$, \emph{i.e.}, $h \colon A \to B$ with $g \circ h = f$, and $\phi \colon \alpha \to \beta$ in $\mE$ with $\mp(\phi) = h$. We have $(I,f,\alpha) = \coprod_f(A_\alpha)$, so for $(h,\phi) \colon \coprod_f(A_\alpha) \to (I,g,\beta)$ we can consider its right adjoint transpose
    \[ (h',\phi') \colon A_\alpha \to f^*(I,g,\beta) = (A,f^*g \colon A \times_I B \fibarr A, (g^*f)^*\beta),\]
    where $h' = (\id_A, h)\colon A \to A \times_I B$ in
\[\begin{tikzcd}
	A \\
	& {A \times_I B} & B \\
	& A & I
	\arrow["{f^*g}", from=2-2, to=3-2, two heads]
	\arrow["f"', from=3-2, to=3-3, two heads]
	\arrow["{g^*f}", from=2-2, to=2-3, two heads]
	\arrow["g", from=2-3, to=3-3, two heads]
	\arrow["h", curve={height=-18pt}, from=1-1, to=2-3]
	\arrow[curve={height=12pt}, Rightarrow, no head, from=1-1, to=3-2]
	\arrow["{h'}"{description}, dashed, from=1-1, to=2-2]
	\arrow["\lrcorner"{anchor=center, pos=0.125}, draw=none, from=2-2, to=3-3]
\end{tikzcd}\]
    and $\phi' \colon \alpha \to (g^*f)^*\beta$ is the unique filler such that $\phi = \mp^*(g^*f, \beta) \circ \phi'$. This is the unique map making the following diagram commute, where $\varepsilon$ denotes the counit of the adjunction $\coprod_f \dashv f^*$ for $\Sigma_\cF(\mp)$:
\[\begin{tikzcd}
	{\coprod_f(A_\alpha)=(I,f,\alpha)} && {(I,g,\beta)} \\
	{\coprod_f(f^*(I,g,\beta))=(I,f \circ f^*g,(g^*f)^*\beta)}
	\arrow["{(I,h,\phi)}", from=1-1, to=1-3]
	\arrow["{\coprod_f(h',\phi')}"', from=1-1, to=2-1]
	\arrow["{\varepsilon_{(I,g,\beta)}}"', from=2-1, to=1-3]
\end{tikzcd}\]
Explicitly, this amounts to:
\[\begin{tikzcd}
	\alpha && {(g^*f)^*\beta} && \beta \\
	A && {A \times_IB} && B \\
	I && I && I
	\arrow["f"', from=2-1, to=3-1, two heads]
	\arrow["{h'}"{description}, from=2-1, to=2-3]
	\arrow[Rightarrow, no head, from=3-1, to=3-3]
	\arrow["{f \circ f^*g}", from=2-3, to=3-3, two heads]
	\arrow["{g^*f}"{description}, from=2-3, to=2-5, two heads]
	\arrow[Rightarrow, no head, from=3-3, to=3-5]
	\arrow["g", from=2-5, to=3-5, , two heads]
	\arrow["{\phi'}"{description}, from=1-1, to=1-3]
	\arrow["{\mathrm{cart}}"{description}, from=1-3, to=1-5]
	\arrow["\phi"{description}, curve={height=-24pt}, from=1-1, to=1-5]
	\arrow["h"{description}, curve={height=-18pt}, from=2-1, to=2-5]
\end{tikzcd}\]
Let $g' := f^*g$ and $\beta' := (g^*f)^*\beta$, then $g'h' = \id_A$. We write $A' := A \times_I B$, and by $(\cF, \coprod)$-quantifier-freeness of $A_\alpha$, the arrow
\[ (h',\id_A,\phi') \colon A_\alpha = (A,\id_A,\alpha) \to f^*(I,g,\beta) = (A,g',\beta') = \coprod_{g'} (A')_{\beta'}\]
factors uniquely as:
\[\begin{tikzcd}
	{A_\alpha } &&&& {(A,g',\beta') \cong \coprod_{g'} \big((A')_{\beta'}\big)} \\
	&& {A_{\beta'}\cong (h')^*\big((A')_{\beta'}\big)}
	\arrow["{(h',\id_A,\phi')}", from=1-1, to=1-5]
	\arrow["{(\id_A,\phi')}"', dashed, from=1-1, to=2-3]
	\arrow["{(h')^* \eta_{(A'_{\beta'})}}"', from=2-3, to=1-5]
\end{tikzcd}\]
Here, $\eta_{(A')_{\beta'}} \colon (A')_{\beta'} \to (g')^* \coprod_{g'} (A')_{\beta'} \cong (g'' \colon A'' \fibarr A', (g'')^*\beta')$ is the unit given by
\[ (\delta_{g'} \colon \id_{A'} \to g'', \sigma_{g',\beta'} \colon \beta' \to (g'')^* \beta')\]
where $\delta_{g'}$ is the diagonal of $g'$ in the sense of
\[\begin{tikzcd}
	{A'} \\
	& {A''} & {A'} \\
	& {A'} & A
	\arrow["{g''}", from=2-2, to=3-2, two heads]
	\arrow["{g'}"', from=3-2, to=3-3, two heads]
	\arrow["{g''}", from=2-2, to=2-3, two heads]
	\arrow["{g'}", from=2-3, to=3-3, two heads]
	\arrow[curve={height=-12pt}, Rightarrow, no head, from=1-1, to=2-3]
	\arrow[curve={height=6pt}, Rightarrow, no head, from=1-1, to=3-2]
	\arrow["{\delta_{g'}}", dashed, from=1-1, to=2-2]
	\arrow["\lrcorner"{anchor=center, pos=0.125}, draw=none, from=2-2, to=3-3]
\end{tikzcd}\]
and $\sigma_{g',\beta'}$ is the unique filler satisfying $\mp^*(g'',\beta') \circ \sigma_{g',\beta'} = \id_{\beta'}$, with $\mp(\sigma_{g',\beta'}) = g''$.
Cartesian reindexing by $h'$ then gives, up to isomorphism, the map
\[ (h')^*\eta_{(A')_{\beta'}} \colon (h')^*(A'_{\beta'}) \cong A_{\beta'} \to (h')^*(g')^*\coprod_{g'}(A'_{\beta'}) \cong (A,g',\beta')\]
defined by
\[ (h')^*\eta_{(A')_{\beta'}} = (h' \colon \id_A \to g', \sigma' \colon \beta \to \beta'). \]
Here, we have used that $(h')^*(\delta_{g'})$ can be identified with $h''$, as becomes transparent from the following diagram and the pullback lemma:
\[\begin{tikzcd}
	A && {A'} \\
	& {A'} && {A''} && {A'} \\
	A && {A'} && A
	\arrow["{h'}", from=1-1, to=1-3]
	\arrow["{\delta_{g'}}", from=1-3, to=2-4]
	\arrow["{g''}"{description}, from=2-4, to=2-6, two heads]
	\arrow["{h'}"{description}, dashed, from=1-1, to=2-2]
	\arrow[Rightarrow, no head, from=1-1, to=3-1]
	\arrow[from=2-2, to=3-1, two heads]
	\arrow["{h'}"{description}, from=3-1, to=3-3]
	\arrow["{g''}", from=2-4, to=3-3, two heads]
	\arrow["{g'}"{description}, from=3-3, to=3-5, two heads]
	\arrow["{g'}"{description}, from=2-6, to=3-5, two heads]
	\arrow[Rightarrow, no head, from=1-3, to=3-3]
	\arrow[curve={height=24pt}, Rightarrow, no head, from=3-1, to=3-5]
 	\arrow["{h''}"{description, pos=0.3}, from=2-2, to=2-4, crossing over]
	\arrow["\lrcorner"{anchor=center, pos=0.125}, draw=none, from=2-2, to=3-3]
	\arrow["\lrcorner"{anchor=center, pos=0.125}, shift left=3, draw=none, from=1-1, to=3-3]
	\arrow["\lrcorner"{anchor=center, pos=0.125}, draw=none, from=2-4, to=3-5]
	\arrow[curve={height=-12pt}, Rightarrow, no head, from=1-3, to=2-6]
\end{tikzcd}\]
The map $\sigma'$ is computed as the following filler, in a diagram lying in the $\mp$-fibre over the top square on the left hand side:
\[\begin{tikzcd}
	{\mathllap{\beta\cong}(h')^*\beta'} && {\beta'} \\
	{\mathllap{\beta'\cong} (h'')^*(g'')^*\beta'} && {(g'')^*\beta'}
	\arrow["{\mathrm{cart}}"{description}, from=1-1, to=1-3]
	\arrow[from=1-1, to=2-1]
	\arrow["{\mathrm{cart}}"{description}, from=2-1, to=2-3]
	\arrow["\sigma", from=1-3, to=2-3]
\end{tikzcd}\]
\end{proof}

Using the elements $I_\alpha$, we now show that $\Sigma_\cF(\mp)$ has enough $(\cF,\coprod)$-quantifier-free objects.

\begin{proposition}\label{prop:fact}
    Let $\fibration{\mE}{\mp}{\mB}$ over a display map category $\pair{\mB}{\cF}$. Then the $(\cF,\coprod)$-quantifier-free objects of $\Sigma_\cF(\mp)$ are, up to isomorphism, the elements $I_\alpha = (I, \id_I, \alpha)$. In particular, since every object $(I, f \colon B \fibarr I, \beta)$ of $\Sigma_\cF(\mp)$ satisfies
    \[ (I,f,\beta) \cong \coprod_f (B, \id_{B}, \beta), \]
    it is the case that $\Sigma_\cF(\mp)$ has enough $(\cF,\coprod)$-quantifier-free objects.
\end{proposition}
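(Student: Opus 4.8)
The plan is to separate the two assertions: the decomposition $(I,f,\beta)\cong\coprod_f(B_\beta)$ together with the resulting ``enough quantifier-free objects'' clause, and the full characterisation of the $(\cF,\coprod)$-quantifier-free objects. I would dispatch the decomposition first, since it is essentially definitional: by the description of cocartesian transport in $\Sigma_\cF(\mp)$, the functor $\coprod_f$ sends $B_\beta=(B,\id_B,\beta)\in\big(\Sigma_\cF\mE\big)_B$ to $(I,f\circ\id_B,\beta)=(I,f,\beta)$, so the stated isomorphism holds on the nose up to the chosen cleavage. Since \Cref{prop:qf-free-elts-of-sigma} already shows that each $B_\beta$ is $(\cF,\coprod)$-quantifier-free and $f\in\cF$, this immediately yields that $\Sigma_\cF(\mp)$ has enough $(\cF,\coprod)$-quantifier-free objects; note that this part does \emph{not} require the converse characterisation.

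It then remains to show that, up to isomorphism, the $I_\alpha$ are exactly the quantifier-free objects. One inclusion is \Cref{prop:qf-free-elts-of-sigma}, so I would concentrate on the converse: given a $(\cF,\coprod)$-quantifier-free $\sigma=(I,f\colon A\fibarr I,\alpha)$, produce $\gamma\in\mE_I$ with $\sigma\cong I_\gamma$. Using the decomposition, $\sigma\cong\coprod_f(A_\alpha)$, and since $\sigma$ is in particular $(\cF,\coprod)$-quantifier splitting I would apply its universal property to the \emph{identity} arrow $\id_\sigma\colon\sigma\vertarr\coprod_f(A_\alpha)$. This yields a unique section $g\colon I\to A$ of $f$ and a vertical arrow $\overline{\id}\colon\sigma\vertarr g^*(A_\alpha)$ with $\id_\sigma=g^*\eta_{A_\alpha}\circ\overline{\id}$. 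A short pullback computation identifies $g^*(A_\alpha)=(I,\id_I,g^*\alpha)=I_{g^*\alpha}$, so $\overline{\id}$ and $g^*\eta_{A_\alpha}$ exhibit $\sigma$ as a retract of $I_{g^*\alpha}$ and set $\gamma:=g^*\alpha$.

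The hard part is upgrading this retraction to an isomorphism, i.e.\ showing $\overline{\id}\circ g^*\eta_{A_\alpha}=\id_{I_{g^*\alpha}}$. My strategy here is to use that $I_{g^*\alpha}$ is itself quantifier-free, hence quantifier splitting, and to invoke the monicity recorded in the remark following \Cref{def:gen-coprod-quant-splitting}: postcomposition $g^*\eta_{A_\alpha}\circ(-)\colon\hom(I_{g^*\alpha},g^*(A_\alpha))\to\hom(I_{g^*\alpha},\coprod_f(A_\alpha))$ is monic. Both $\id_{I_{g^*\alpha}}$ and $\overline{\id}\circ g^*\eta_{A_\alpha}$ are carried by this map to $g^*\eta_{A_\alpha}$, the latter because $g^*\eta_{A_\alpha}\circ\overline{\id}=\id_\sigma$, so injectivity forces them to coincide. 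Hence $\overline{\id}$ and $g^*\eta_{A_\alpha}$ are mutually inverse and $\sigma\cong I_{g^*\alpha}$, completing the characterisation. The only genuine care needed throughout is in the pullback/cleavage bookkeeping for $g^*(A_\alpha)$ and in checking that the remark's hypotheses apply with $\alpha:=I_{g^*\alpha}$, $u:=f$, section $g$, and $\beta:=A_\alpha$.
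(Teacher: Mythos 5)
Your proof is correct, and the opening moves coincide with the paper's: both factor the identity $\id_\sigma \colon \sigma \to \coprod_f(A_\alpha)$ through the splitting property of $\sigma$, obtaining a section $g$ of $f$ and a retraction diagram exhibiting $\sigma$ as a retract of $g^*(A_\alpha) \cong I_{g^*\alpha}$. Where you genuinely diverge is in upgrading the retract to an isomorphism. The paper does this by brute force in the concrete model: it unfolds the unit $\kappa_{A_\alpha}$ explicitly (via \Cref{prop:sigma-compl-unit}), tracks the middle components of the factorization $\id_\Phi = s^*\kappa_{A_\alpha} \circ \iota$ in $\Sigma_\cF(\mE)$, deduces from this bookkeeping that $s \circ f = \id_A$ — so the display map $f$ is itself an isomorphism — and then concludes by two-out-of-three that all the comparison maps are isomorphisms. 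You instead stay at the level of the universal property: since $I_{g^*\alpha}$ is quantifier-free by \Cref{prop:qf-free-elts-of-sigma}, hence splitting, the uniqueness clause of \Cref{def:gen-coprod-quant-splitting} (packaged as the monicity remark following it, applied with source $I_{g^*\alpha}$, $u:=f$, section $g$, $\beta:=A_\alpha$) lets you cancel $g^*\eta_{A_\alpha}$ and conclude $\overline{\id} \circ g^*\eta_{A_\alpha} = \id$; this is the standard ``a retract whose retraction is monic is an isomorphism'' argument, and your verification that both endomorphisms postcompose to $g^*\eta_{A_\alpha}$ is exactly right. Your route buys brevity and model-independence: it needs no cleavage bookkeeping, no explicit formula for the unit of $\coprod_u \dashv u^*$ in the completion, and hence no dependence on \Cref{prop:sigma-compl-unit}; it also lands on the cleaner representative $I_{g^*\alpha}$ with $g^*\alpha \in \mE_I$, rather than the paper's slightly abusive $(I,\id_I,\alpha)$ with $\alpha$ living over $A$. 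What the paper's computation buys in exchange is the explicit structural observation that the display map of a quantifier-free object is forced to be invertible (in your argument this fact is still true — any vertical isomorphism $(I,f,\alpha) \cong I_\gamma$ forces $f$ to be iso — but it appears only a posteriori) together with concrete formulas that are reused in the surrounding development. The one point to keep honest, which you flag yourself, is that quantifier-freeness of $\sigma$ and of $I_{g^*\alpha}$ gives splitting-ness of the objects themselves only after identifying $\id^*(-)$ with the identity up to vertical isomorphism, under which the splitting property is invariant; this is harmless and is glossed over in the paper as well.
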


\begin{proof}
    From \Cref{prop:qf-free-elts-of-sigma}, we know that the elements of the form $I_\alpha$ are $(\cF, \coprod)$-quantifier-free.

    We assume $\Phi := (I, f \colon A \fibarr I, \alpha \in \mE_\alpha)$ is $(\cF, \coprod)$-quantifier-free. Then, we can factor the identity $\id_\Phi \colon (I,f,\alpha) \to (I,f,\alpha) = \coprod_f A_\alpha$, using a section $s \colon I \to A$ of $f$, as:
    \[\begin{tikzcd}
	\Phi && {\Phi = \coprod_f(A_\alpha)} \\
	& {s^*(A_\alpha)}
	\arrow["{\id_\Phi}", from=1-1, to=1-3]
	\arrow["\iota"', dashed, from=1-1, to=2-2]
	\arrow["{s^*\kappa_{A_\alpha}}"', from=2-2, to=1-3]
    \end{tikzcd}\]
    where $\kappa \colon \id \Rightarrow  f^* \coprod_f$ is the unit of the adjunction $\coprod_f \dashv f^*$ of $\Sigma_\cF(\mp)$. At $A_\alpha$, the unit is given by the pair
    \[ \kappa_{A_\alpha} = \Big(\delta_f \colon A \to A \times_I A, \eta_\alpha^f \colon \alpha \to f^*\coprod_f \alpha \Big),\]
    where $\delta_f \colon A \to A\times_I A$ is the diagonal of $f$, and $\eta^f$ is the unit of the adjunction $\coprod_f \dashv f^*$ of the fibration $\mp$. Reindexing by $s$ yields 
    \[s^*\kappa_{A_\alpha} = \Big( \rho_f \colon I \to I \times_A (A \times_I A), s^*\eta_\alpha^f \colon s^*\alpha \to \alpha \Big)\]
    where the map $\rho_f \colon I \to I \times_A (A \times_I A)$ arises from:
\[\begin{tikzcd}
	I &&& A \\
	& {I \times_A(A \times_IA)} &&& {A \times_I A} && A \\
	I &&& A && I
	\arrow[from=2-2, to=3-1, two heads]
	\arrow[Rightarrow, no head, from=1-4, to=3-4]
	\arrow["{\delta_f}", from=1-4, to=2-5]
	\arrow[from=2-5, to=3-4, two heads]
	\arrow["s"', from=3-1, to=3-4]
	\arrow["{\rho_f}", dashed, from=1-1, to=2-2]
	\arrow["s", from=1-1, to=1-4]
	\arrow[Rightarrow, no head, from=1-1, to=3-1]
	\arrow["\lrcorner"{anchor=center, pos=0.125}, draw=none, from=2-2, to=3-4]
	\arrow["\lrcorner"{anchor=center, pos=0.125}, shift left, draw=none, from=1-1, to=3-4]
	\arrow[from=2-5, to=2-7, two heads]
	\arrow["f"', from=2-7, to=3-6, two heads]
	\arrow["f"', from=3-4, to=3-6, two heads]
	\arrow["\lrcorner"{anchor=center, pos=0.125}, draw=none, from=2-5, to=3-6]
	\arrow[curve={height=-12pt}, Rightarrow, no head, from=1-4, to=2-7]
	\arrow[curve={height=24pt}, Rightarrow, no head, from=3-1, to=3-6, shorten >=10pt]
 	\arrow[from=2-2, to=2-5, crossing over]
\end{tikzcd}\]
Now, by pasting of the two front pullback squares, this yields up to isomorphism the diagram:
\[\begin{tikzcd}
	I \\
	& A && A \\
	& I && I
	\arrow["f"', from=2-2, to=3-2, two heads]
	\arrow[Rightarrow, no head, from=3-2, to=3-4]
	\arrow[Rightarrow, no head, from=2-2, to=2-4]
	\arrow["f", from=2-4, to=3-4, two heads]
	\arrow["s", curve={height=-18pt}, from=1-1, to=2-4]
	\arrow[curve={height=12pt}, Rightarrow, no head, from=1-1, to=3-2]
	\arrow["\lrcorner"{anchor=center, pos=0.125}, draw=none, from=2-2, to=3-4]
	\arrow["s"{description}, dashed, from=1-1, to=2-2]
\end{tikzcd}\]
This means, up to isomorphism, we can write
\[ s^*\kappa_{A_\alpha} = \Big( s \colon I \to A, s^*\eta_\alpha^f \colon s^*\alpha \to \alpha \Big).\]
By the factorization condition, we get:
\[\begin{tikzcd}
	\alpha && {s^*\alpha} && \alpha \\
	A && I && A \\
	I && I && I
	\arrow["f"{description}, from=2-1, to=2-3, two heads]
	\arrow["s"{description}, from=2-3, to=2-5]
	\arrow["{s^*\eta_\alpha^f}"{description}, from=1-3, to=1-5]
	\arrow[curve={height=-24pt}, Rightarrow, no head, from=1-1, to=1-5]
	\arrow["f"', from=2-1, to=3-1, two heads]
	\arrow[Rightarrow, no head, from=3-1, to=3-3]
	\arrow[Rightarrow, no head, from=3-3, to=3-5]
	\arrow["f", from=2-5, to=3-5, two heads]
	\arrow[Rightarrow, no head, from=2-3, to=3-3]
	\arrow[curve={height=-18pt}, Rightarrow, no head, from=2-1, to=2-5]
	\arrow["\psi"{description}, from=1-1, to=1-3]
\end{tikzcd}\]
But that means $s \circ f = \id_A$, so $s$ and $f$ are both isomorphisms. This means that their (co-)cartesian liftings are isomorphisms too. In particular, $s^*\eta^f_\alpha \cong \eta^f_\alpha$, and we obtain
\[s^*\kappa_{A_\alpha} = \Big(s \colon I \to A, \eta^f_\alpha \colon \alpha \to \coprod_f f^* \alpha \cong \alpha\Big).\]
By $2$-out-of-$3$, since both $\mp^*(f,\alpha)$ and $\mp_!(f,\alpha)$ are isomorphisms, then so is $\eta^f_\alpha$. Again, by $2$-out-of-$3$, so must be $\psi$. All in all, $\iota = ((f,\id_I),\psi)$ turns out to be an isomorphism $\Phi = (I,f,\alpha) \cong (I,\id_I,\alpha) = I_\alpha$.
\end{proof}

We now characterize the $\Sigma_\cF$-completions as excatly those cocomplete fibrations with enough $(\cF,\coprod)$-quantifier-free elements.

\begin{theorem}\label{thm:coprod-compl-enough-quant}
    A fibration $\fibration{\mE}{\mp}{\mB}$ with $\cF$-coproducts is an instance of an $\cF$-coproduct completion over $\mB$ (up to fibred equivalence) if and only if it has enough $(\cF,\coprod)$-quantifier-free elements.
\end{theorem}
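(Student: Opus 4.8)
The statement is a biconditional, so the plan is to prove the two implications separately, dispatching the easy direction first. For the forward direction, suppose $\mp$ is an instance of an $\cF$-coproduct completion, i.e.\ $\mp \simeq \Sigma_\cF(\mathsf q)$ (fibred equivalence) for some fibration $\fibration{\mathsf D}{\mathsf q}{\mB}$. Then \Cref{prop:fact} already shows that $\Sigma_\cF(\mathsf q)$ has enough $(\cF,\coprod)$-quantifier-free objects. Since being a $(\cF,\coprod)$-quantifier-free object, possessing $\cF$-coproducts, and the associated units are notions articulated purely in terms of cartesian/cocartesian arrows, reindexing, and the adjunctions $\coprod_u \dashv u^*$, I would argue they are preserved and reflected along a fibred equivalence. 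Hence the property transports from $\Sigma_\cF(\mathsf q)$ to $\mp$, and I would only need to record the transport lemma that a fibred equivalence carries quantifier-free objects to quantifier-free objects and commutes with the coproduct functors up to coherent isomorphism.

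The substance lies in the converse. Assuming $\mp$ has $\cF$-coproducts and enough $(\cF,\coprod)$-quantifier-free objects, the natural candidate for the base datum of the completion is the full subfibration $\fibration{\mE'}{\mp'}{\mB}$ spanned fibrewise by the $(\cF,\coprod)$-quantifier-free objects. First I would verify that this is genuinely a subfibration: by \Cref{def:gen-coprod-quant-free}, quantifier-freeness is by definition stable under arbitrary reindexing, so $\mE'$ is closed under cartesian liftings. I would then define a functor $F \colon \Sigma_\cF(\mp') \to \mE$ over $\mB$ sending an object $(I, g \colon B \fibarr I, \beta)$, with $\beta \in \mE'_B$, to $\coprod_g \beta \in \mE_I$ using the $\cF$-coproducts of $\mp$. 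It is patently fibred since $\Sigma_\cF(\mp')(I,g,\beta) = I = \mp(\coprod_g\beta)$, and using the Beck--Chevalley condition one checks that $F$ commutes with reindexing, so that it is a cartesian functor. To conclude that $F$ is a fibred equivalence, it then suffices to show that each fibre restriction $F_I$ is an equivalence of categories.

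Fibrewise essential surjectivity of $F_I$ is exactly the hypothesis of having enough quantifier-free objects: every $\alpha \in \mE_I$ is isomorphic to some $\coprod_g \beta$ with $\beta$ quantifier-free. The heart of the argument—and the step I expect to be the main obstacle—is fibrewise full faithfulness, namely the bijection
\[ \hom_{\Sigma_\cF(\mp')_I}\big((I,g,\beta),(I,h,\gamma)\big) \cong \hom_{\mE_I}\big(\coprod_g\beta, \coprod_h\gamma\big). \]
Here I would reason as follows. By the adjunction $\coprod_g \dashv g^*$ the right-hand side is in bijection with $\hom_{\mE_B}(\beta, g^*\coprod_h\gamma)$; forming the pullback of $h$ along $g$ and invoking Beck--Chevalley (\Cref{def:fib-compl}) rewrites this as $\hom_{\mE_B}(\beta, \coprod_{h'}(g')^*\gamma)$, where $h' \colon B\times_I C \fibarr B$ is the pullback display map and $g' \colon B \times_I C \to C$ the other projection. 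Now the quantifier-splitting property of $\beta$ (\Cref{def:gen-coprod-quant-splitting}) applies: such a map factors uniquely through a section $s$ of $h'$ followed by the unit. A section of $h'$ is precisely a map $k \colon B \to C$ over $I$ with $hk = g$ (via $k = g's$, $s = \langle \id_B, k\rangle$), while the residual factor $\beta \to s^*(g')^*\gamma = k^*\gamma$ transposes across the cartesian lifting to a map $\phi \colon \beta \to \gamma$ over $k$. This recovers exactly a vertical morphism $(k,\phi) \colon (I,g,\beta) \to (I,h,\gamma)$ of $\Sigma_\cF(\mp')$, and I would then check that the two translations are mutually inverse. The delicate points, which I expect to consume most of the bookkeeping, are keeping track of the canonical isomorphisms introduced by Beck--Chevalley and the cartesian liftings, and confirming that it is genuinely \emph{quantifier-freeness} rather than mere splitting at $B$ that is needed—freeness ensures the universal property survives every reindexing, which is what makes the assignment compatible with the full fibred structure and yields an equivalence of fibrations rather than only of individual fibres.
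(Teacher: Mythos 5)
Your proposal is correct, and its skeleton matches the paper's: the candidate completion datum is the full subfibration $\mp'$ of $(\cF,\coprod)$-quantifier-free objects (well-defined as a subfibration precisely because quantifier-\emph{freeness}, unlike mere splitting, is stable under reindexing), the comparison functor goes $\Sigma_\cF(\mp') \to \mp$, essential surjectivity is exactly the ``enough'' hypothesis, full faithfulness is checked fibrewise, and the easy direction transports the conclusion of \Cref{prop:fact} along a fibred equivalence (the paper leaves this transport implicit; you rightly flag it as a lemma to record). Where you genuinely diverge is in how the two key steps are executed. The paper does not construct $F$ by hand: it invokes the $2$-monadic universal property of the completion (\Cref{thm_coprod_monad}) to obtain a unique $\cF$-coproduct-preserving $F$ with $F \circ \eta_{\overline{\mp}} = \iota$, and then proves full faithfulness by factoring an arbitrary vertical morphism $(I,h,\phi)$ through the counit and the unit via \Cref{rem:fact} and \Cref{prop:fact}, pushing that factorization through $F$. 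You instead define $F$ explicitly by $(I,g,\beta) \mapsto \coprod_g \beta$ and prove full faithfulness by the direct hom-set computation
\[
\hom_{\mE_I}\big(\textstyle\coprod_g\beta,\coprod_h\gamma\big) \cong \hom_{\mE_B}\big(\beta, g^*\textstyle\coprod_h\gamma\big) \cong \hom_{\mE_B}\big(\beta, \textstyle\coprod_{h'}(g')^*\gamma\big),
\]
followed by the unique section-plus-unit factorization granted by the splitting property, which decomposes this set as a disjoint union, over maps $k \colon B \to C$ with $hk=g$, of maps $\beta \to \gamma$ over $k$ --- exactly $\hom_{\Sigma_\cF(\mp')_I}\big((I,g,\beta),(I,h,\gamma)\big)$. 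The paper's route buys economy: functoriality, cartesianness, and coproduct preservation of $F$ come for free from the universal property, at the price of relying on the prior propositions and the $2$-monad theorem. Your route is self-contained and more elementary, and it makes transparent where each hypothesis enters (Beck--Chevalley for cartesianness of $F$, splitting for full faithfulness, freeness for $\mp'$ to be a fibration); the debts you still owe are routine but real: verifying that $F$ is a well-defined cartesian functor on all (not just vertical) morphisms, and checking that your abstract chain of bijections is precisely the action of $F$ on hom-sets rather than merely an abstract isomorphism of hom-sets.
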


\begin{proof}
    Let $\fibration{\overline{\mE}}{\overline{\mp}}{\mB}$ be the full subfibration of $\mp$ arising when restricting to the $(\cF,\coprod)$-quantifier-free objects of $\mp$. By the universal property of the $\cF$-coproduct completion, there exists a unique morphism of fibrations with $\cF$-coproducts such that the following diagram commutes:
\[\begin{tikzcd}
	{\overline{\mp}} & \mp \\
	{\sum_\cF(\overline{\mp})}
	\arrow["{\eta_{\overline{\mp}}}"', from=1-1, to=2-1]
	\arrow["\iota", hook, from=1-1, to=1-2]
	\arrow["F"', dashed, from=2-1, to=1-2]
\end{tikzcd}\]
    We will show that $F$ is an equivalence. We abbreviate $\mp' := \sum_\cF(\mp) \colon \mE' := \sum_\cF(\mE) \to \mB$. We will denote the cocartesian transports as $\sum_f := \coprod_f^\mp$ and $\coprod_f := \coprod_f^{\mp'} 
 =\coprod_f^{\sum_\cF(\overline{\mp})}$. We observe that, on the $(\cF,\coprod)$-quantifier-free objects, $F$ acts as the projection to the $\mE$-part of $\mE'$ in the sense that:
    \[ F(I_\gamma) = F(I, \id_I, \gamma \in \overline{E}_I) = (F \circ \eta_{\overline{\mp}})(\gamma) = \iota(\gamma) =
 \gamma.\]
    \emph{Essential surjectivity:} Let $\alpha \in \mE_I$. Since $\mp$ has enough $(\cF,\coprod)$-quantifier-free elements there exists $J \in \mB$, $f \colon J \to I$ in $\mB$ s.t.~$\sum_f \beta \cong \alpha$. Since $F$ preserves $\cF$-coproducts, we obtain
    \[ F(I,f,\beta) = F\Big(\coprod_f(J,\id_J,\beta)\Big) \cong \sum_f F(J,\id_J,\beta) = \sum_f \beta.\]
    as desired.
    
    \emph{Full faithfulness:} It suffices to show that $F \colon \mp' \to \mp$ gives rise to a family of equivalences $F_I \colon \mE'_I \to \mE_I$. We have shown essential surjectivity of the $F_I$ so it only remains to prove fully faithfulness. Recall from~\Cref{prop:fact} the factorization of a morphism $(h,\phi) \colon (I,f,\alpha) \to (I,g,\beta)$ (up to isomorphism) as $(I,h,\phi) = \varepsilon_{(I,g,\beta)} \circ \coprod_f (h',\phi')$, where $(h',\phi') \colon A_\alpha \to f^*(I,g,\beta)$ is the right adjoint transpose of $(I,h,\phi)$:

    \[\begin{tikzcd}
	{} & {\mathllap{(I,A,\alpha)=}\coprod_f(A_\alpha)} && {(I,g,\beta)} \\
	& {\coprod_ff^*(I,g,\beta)}
	\arrow["{(I,h,\phi)}", from=1-2, to=1-4]
	\arrow["{\coprod_f(h',\phi')}"', from=1-2, to=2-2]
	\arrow["{\varepsilon_{(I,g,\beta)}}"', from=2-2, to=1-4]
    \end{tikzcd}\]

    Here, $(h',\phi') \colon A_\alpha \to f^*(I,g,\beta)$ factors as follows:
\[\begin{tikzcd}
	{A_\alpha} && {f^*(I,g,\beta)} \\
	& {A_{\beta'} \cong (h')^*((A')_{\beta'})}
	\arrow["{(h',\phi')}", from=1-1, to=1-3]
	\arrow["{\iota_{\overline{\mp}}\phi'=(\id_A,\phi)}"', from=1-1, to=2-2]
	\arrow["{(h')^*\eta_{A'_{\beta'}}}"', from=2-2, to=1-3]
\end{tikzcd}\]
Taken together, we get the factorization
\[ (I,h,\phi) = \varepsilon_{I,g,\beta} \circ \coprod_f(h',\phi') = \varepsilon_{I,g,\beta} \circ \coprod_f\big( (h')^*\eta_{(A'_{\beta'})}\big) \circ \coprod_f \eta_{\overline{\mp}}(\phi'). \]
Since $F$ preserves $\cF$-sums and commutes with the inclusions, we obtain
\[ F(I,h,\phi) = \varepsilon_{\coprod_g \beta} \circ \Big( \sum_f \big( (h')^* \eta_{A'_{\beta'}} \big) \Big) 
\circ \Big( \sum_f \phi' \Big)\]
which is indeed an arrow $\sum_f \alpha \to \sum_f \beta$. Analogously, every arrow $\sum_f \alpha \to \sum_f \beta$ in $\mE'_I$ can be uniquely factored as such a composition, using the same arguments, and by full faithfulness of $\iota \colon \overline{\mp} \hookrightarrow \mp$. Thus, the function
\[ \mE_I\big( (I,f,\alpha), (I,g,\beta) \big) \to \mE'_I\big( \sum_f \alpha, \sum_f \beta\big)\]
induced by $F|_{\mE_I}$ is bijectice, \ie, $F|_{\mE_I}$ is fully faithful.
\end{proof}

An analogous statement can be proven for the $\Pi_\cF$-completion by duality.

Finally, we can combine all of these results to prove our envisioned characterization of Dialectica fibrations as exactly the Gödel fibrations.

\begin{theorem}
    A fibration $\fibration{\mE}{\mp}{\mB}$ over a display map category $\pair{\mB}{\cF}$ with $\cF$-products is an instance of a simple product completion if and only if it has enough $(\cF,\prod)$-quantifier-free objects.
\end{theorem}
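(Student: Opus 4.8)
The plan is to derive this theorem from its already-established coproduct counterpart \Cref{thm:coprod-compl-enough-quant} by dualizing along the fibrewise opposite construction of \Cref{def_op_fibration}, using the identification $\Pi_\cF(\mp) \cong (\Sigma_\cF(\mp^{(\mathrm{op})}))^{(\mathrm{op})}$ from \Cref{prop_product_via_coproduct_and_op}. The guiding principle is that passing to the opposite fibration reverses vertical maps and thereby interchanges the roles of $\prod$ and $\coprod$ along $\cF$, so that both the structural hypotheses and the quantifier-freeness conditions of the two statements correspond to one another; the result then follows formally.

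First I would record the duality of adjoints. Reversing vertical arrows turns a left adjoint of reindexing into a right adjoint and conversely: if $\mp$ has $\cF$-products, so that $u^* \dashv \prod_u$ for every $u \in \cF$, then in $\mp^{(\mathrm{op})}$ the induced reindexing acquires a \emph{left} adjoint, namely (the opposite of) $\prod_u$, which plays the role of $\coprod_u$ in $\mp^{(\mathrm{op})}$; under this correspondence the counit $\varepsilon$ of $u^* \dashv \prod_u$ becomes the unit of the coproduct adjunction of $\mp^{(\mathrm{op})}$. In particular $\mp$ has $\cF$-products if and only if $\mp^{(\mathrm{op})}$ has $\cF$-coproducts.

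Next I would verify that $(\cF,\prod)$-quantifier-freeness for $\mp$ is exactly $(\cF,\coprod)$-quantifier-freeness for $\mp^{(\mathrm{op})}$. Comparing \Cref{def:gen-prod-quant-splitting} with \Cref{def:gen-coprod-quant-splitting}, the defining universal property of a $(\cF,\prod)$-quantifier splitting object---the unique factorization of a vertical map $\prod_u\beta \vertarr \alpha$ through $g^*\varepsilon_\beta$ for a section $g$ of $u$---becomes, after reversing all vertical arrows and using the adjoint correspondence above (so that $\prod_u$ reads as the coproduct of $\mp^{(\mathrm{op})}$ and $\varepsilon$ as its unit), precisely the defining property of \Cref{def:gen-coprod-quant-splitting} in $\mp^{(\mathrm{op})}$. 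Because reindexing in $\mp^{(\mathrm{op})}$ is (the opposite of) reindexing in $\mp$, stability under substitution transfers as well, so the quantifier-\emph{free} notions of \Cref{def:gen-prod-quant-free} and \Cref{def:gen-coprod-quant-free} correspond, and an isomorphism $\alpha \cong \prod_f \beta$ in $\mp$ is the same datum as $\alpha \cong \coprod_f \beta$ in $\mp^{(\mathrm{op})}$. Hence $\mp$ has enough $(\cF,\prod)$-quantifier-free objects if and only if $\mp^{(\mathrm{op})}$ has enough $(\cF,\coprod)$-quantifier-free objects.

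Finally I would assemble the equivalence. If $\mp$ has enough $(\cF,\prod)$-quantifier-free objects, then $\mp^{(\mathrm{op})}$ has enough $(\cF,\coprod)$-quantifier-free objects, so \Cref{thm:coprod-compl-enough-quant} yields a fibred equivalence $\mp^{(\mathrm{op})} \simeq \Sigma_\cF(\mathsf{q})$ for some fibration $\mathsf{q}$; applying the opposite and \Cref{prop_product_via_coproduct_and_op} gives $\mp \simeq (\Sigma_\cF(\mathsf{q}))^{(\mathrm{op})} \cong \Pi_\cF(\mathsf{q}^{(\mathrm{op})})$, exhibiting $\mp$ as a $\Pi_\cF$-completion. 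Conversely, if $\mp \cong \Pi_\cF(\mathsf{r})$ then $\mp^{(\mathrm{op})} \cong \Sigma_\cF(\mathsf{r}^{(\mathrm{op})})$ by the same proposition, whence $\mp^{(\mathrm{op})}$ has enough $(\cF,\coprod)$-quantifier-free objects by \Cref{thm:coprod-compl-enough-quant}, and transporting back along the opposite produces enough $(\cF,\prod)$-quantifier-free objects in $\mp$. I expect the main obstacle to be the bookkeeping of the middle paragraph: one must check with care that the reversal of vertical maps carries the unique section-and-factorization data of \Cref{def:gen-prod-quant-splitting} onto that of \Cref{def:gen-coprod-quant-splitting}---matching units with counits and $\prod$ with $\coprod$ throughout---and that $(\mp^{(\mathrm{op})})^{(\mathrm{op})} \cong \mp$ canonically, so that the round-trips through the opposite are harmless.
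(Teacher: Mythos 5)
Your proposal is correct and follows exactly the paper's own route: the paper proves this theorem in one line by combining \Cref{prop_product_via_coproduct_and_op} with \Cref{thm:coprod-compl-enough-quant}, which is precisely the duality argument you carry out. The only difference is that you spell out the bookkeeping (products becoming coproducts, counits becoming units, quantifier-freeness transferring across the fibrewise opposite) that the paper leaves implicit.
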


\begin{proof}
    This follows from~\Cref{prop_product_via_coproduct_and_op} combined with~\Cref{thm:coprod-compl-enough-quant}.
\end{proof}

Combining the last two results yields the following main result, characterising the dependent Gödel fibrations, up to fibred isomorphism, as the dependent Dialectica fibrations, with respect to a fixed class of display maps:

\begin{theorem}
    Let $\fibration{\mE}{\mp}{\mB}$ be a fibration with $\cF$-products, $\cF$-coproducts and such that $\mB$ has $\cF$-dependent products. Then there exists a fibration $\mp'$ such that $\mathfrak{Dial}_{\cF}(\mp') \cong \mp$ if and only if $\mp$ is a Gödel fibration.

    In particular, $\mp'$ can be taken to be the full subfibration of $(\cF,\prod)$-quantifier-free elements of $\mp$.
\end{theorem}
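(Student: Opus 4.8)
The plan is to deduce the theorem by combining the two completion-characterisation results with the structural analysis of the coproduct completion, using the decomposition $\mathfrak{Dial}_\cF(\mp') = \Sigma_\cF\Pi_\cF(\mp')$ from \Cref{def_dependent_dialectica}. Neither direction should require new constructions: the content lies in assembling the pieces and in controlling how the product structure interacts with the $(\cF,\coprod)$-quantifier-free objects.

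For the ``if'' direction, assume $\mp$ is a Gödel fibration. Being in particular a Skolem fibration, $\mp$ has enough $(\cF,\coprod)$-quantifier-free objects, so by \Cref{thm:coprod-compl-enough-quant} there is a fibred equivalence $\mp \simeq \Sigma_\cF(\overline{\mp})$, where $\overline{\mp}$ is the full subfibration on the $(\cF,\coprod)$-quantifier-free objects. By the Gödel hypothesis, $\overline{\mp}$ has enough $(\cF,\prod)$-quantifier-free objects; applying the product-completion characterisation (the dual of \Cref{thm:coprod-compl-enough-quant} stated just above) yields $\overline{\mp} \simeq \Pi_\cF(\mp')$, with $\mp'$ the full subfibration of $(\cF,\prod)$-quantifier-free objects of $\overline{\mp}$. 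Chaining the two equivalences gives $\mp \simeq \Sigma_\cF\Pi_\cF(\mp') = \mathfrak{Dial}_\cF(\mp')$, as desired.

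For the ``only if'' direction, suppose $\mp \cong \Sigma_\cF\Pi_\cF(\mp')$. The base $\mB$ has $\cF$-dependent products and $\mp$ has $\cF$-products and $\cF$-coproducts by hypothesis. By \Cref{prop:fact}, the coproduct completion $\Sigma_\cF(\Pi_\cF(\mp'))$ has enough $(\cF,\coprod)$-quantifier-free objects, and moreover its $(\cF,\coprod)$-quantifier-free objects are, up to isomorphism, exactly the elements $I_\alpha = (I,\id_I,\alpha)$ with $\alpha \in \Pi_\cF(\mp')_I$; hence the full subfibration of $(\cF,\coprod)$-quantifier-free objects is fibred equivalent to $\Pi_\cF(\mp')$ itself. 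The dual of \Cref{prop:fact} then shows that $\Pi_\cF(\mp')$ has enough $(\cF,\prod)$-quantifier-free objects, which is precisely the Gödel condition. It remains to verify the last Skolem axiom, that $(\cF,\coprod)$-quantifier-free objects are stable under $\cF$-products: under the identification above this amounts to checking that the $\cF$-products of $\mathfrak{Dial}_\cF(\mp')$ preserve the class of objects of the form $I_\alpha$, equivalently that they restrict to the product structure already present on $\Pi_\cF(\mp')$. Finally, for the ``in particular'' clause one identifies $\mp'$ with the full subfibration of $(\cF,\prod)$-quantifier-free objects of $\mp$ itself, which requires comparing $(\cF,\prod)$-quantifier-freeness taken in $\mp$ versus in the subfibration $\overline{\mp}$.

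The main obstacle is this last verification. As flagged in the remark preceding prenexation, $(\cF,\prod)$-quantifier-freeness relative to the subfibration $\overline{\mp}$ need not coincide with $(\cF,\prod)$-quantifier-freeness relative to the ambient $\mp$, so pinning down that the recovered fibration $\mp'$ is genuinely the $(\cF,\prod)$-quantifier-free subfibration of $\mp$ --- and that the product structure on the Dialectica fibration restricts correctly to the $\coprod$-quantifier-free part --- is the delicate step. The coproduct side is comparatively routine, being handled wholesale by \Cref{thm:coprod-compl-enough-quant} and \Cref{prop:fact}; the real work is controlling the product structure and reconciling the two a priori distinct notions of $\prod$-quantifier-freeness.
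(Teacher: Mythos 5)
Your proposal follows exactly the paper's route: the paper offers no proof beyond the sentence ``Combining the last two results yields the following main result,'' i.e.\ precisely the assembly of \Cref{thm:coprod-compl-enough-quant} with its dual that you carry out. Your ``if'' direction is complete and correct (the chaining $\mp \simeq \Sigma_\cF(\overline{\mp}) \simeq \Sigma_\cF\Pi_\cF(\mp')$ uses 2-functoriality of $\Sigma_\cF$ from \Cref{thm_coprod_monad}, and, as in the paper's own statement, one really obtains a fibred equivalence rather than an isomorphism). The two points you flag as obstacles in the ``only if'' direction are genuine --- the paper silently skips both --- but neither is hard to discharge, so let me record how.

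For stability of the $(\cF,\coprod)$-quantifier-free objects under $\cF$-products: by \Cref{prop:fact} these are, up to isomorphism, the elements $I_\alpha$ with $\alpha \in \big(\Pi_\cF(\mp')\big)_I$. Fix $f:I \tofib J$ in $\cF$. For any $\tau = (g:Y \tofib J,\gamma)$ one computes, unwinding vertical morphisms of the $\Sigma_\cF$-completion and using that $g^*f \in \cF$ by pullback stability,
\[
\hom_I\big(f^*\tau, I_\alpha\big)\;\cong\;\hom_{Y\times_JI}\big((g^*f)^*\gamma,(f^*g)^*\alpha\big)\;\cong\;\hom_{Y}\Big(\gamma,\prod_{g^*f}(f^*g)^*\alpha\Big)\;\cong\;\hom_{Y}\Big(\gamma,g^*\prod_f\alpha\Big)\;\cong\;\hom_J\big(\tau,J_{\prod_f\alpha}\big),
\]
where the outer hom-sets are fibres of $\Sigma_\cF\Pi_\cF(\mp')$, the inner ones are fibres of $\Pi_\cF(\mp')$, and the middle isomorphisms are the adjunction and the right Beck--Chevalley condition in $\Pi_\cF(\mp')$ (available since $\mB$ has $\cF$-dependent products). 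Since right adjoints are unique up to isomorphism, the product $\prod_f(I_\alpha)$ supplied by the hypothesis on $\mp$ must be isomorphic to $J_{\prod_f\alpha}$, which is again of quantifier-free form; this simultaneously shows that the product structure of $\mathfrak{Dial}_\cF(\mp')$ restricts along $I_\alpha \mapsto \alpha$ to that of $\Pi_\cF(\mp')$, so no separate verification is needed. As for the ``in particular'' clause: you are right that it cannot be read literally --- doing so conflicts with the paper's own remark after \Cref{def:goedel-fib} that $(\cF,\prod)$-quantifier-freeness in $\mp'$ and in $\mp$ differ. It must be read as ``the full subfibration of $(\cF,\prod)$-quantifier-free elements \emph{of the $(\cF,\coprod)$-quantifier-free subfibration} of $\mp$,'' which is exactly the $\mp'$ your ``if'' direction constructs, and exactly what the ``only if'' direction recovers: by the dual of \Cref{prop:fact}, the $(\cF,\prod)$-quantifier-free objects of $\Pi_\cF(\mp')$ are the elements $I_\alpha$, and these form a subfibration equivalent to $\mp'$. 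With these two additions your outline is a complete proof, and it is the proof the paper intends.
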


\section{Conclusions}
G\"odel's Dialectica Interpretation has had many categorical conceptualizations. Philip Scott introduced a completely syntactic version~\cite{Scott71978}. de Paiva \cite{depaiva1991dialectica} introduced a categorification of the construction, by assigning to (a finitely complete) category $\mathsf C$ its \emph{Dialectica category} $\mathsf{Dial}(\mathsf C)$. 
Work of Hyland, Biering, Hofstra, von Glehn, and Moss, generalised the Dialectica construction, assigning to a Grothendieck fibration $\mathsf p: \mathsf E \to \mathsf B$ 
its \emph{Dialectica fibration} $\mathfrak{Dial}(\mathsf p)$. In particular, Hofstra proved that the Dialectica fibration can be obtained as the composition of two free constructions: one adding  (simple) products to a given fibration, and the second adding (simple) coproducts.
Building on Hofstra's work, Trotta et al. proved an \emph{internal} characterisation of the Dialectica construction, introducing  \emph{Skolem and Gödel fibrations}, through the key notion of \emph{quantifier-free elements} of a fibration. 

In this work we extend the previous results to the ``dependent'' case, by replacing the completion process of adding products, then coproducts iteratively with respect to cartesian projections, by adding \emph{dependent} products and coproducts, with respect to a class of display maps $\cal F$.

 Thus the (simple) Dialectica fibration of a fibration $\mathsf p$ gets replaced by its generalised variant $\mathfrak{Dial}_\mathcal F(\mathsf p)$, which arises by freely adding \emph{fibred} products and coproducts along the display maps of $\mathcal F$.  
We also introduce a new class of fibrations, which provides a categorification of the calculus of Hilbert  ($\epsilon$- and $\tau$-) operators. Then, we show that every Hilbert ($\epsilon$- and $\tau$-)fibration is a particular (idempotent) case of a Gödel fibration. 

From an algebraic perspective, as a result of our previous analysis, we obtain that the Hilbert, (locally) Skolem and Gödel fibrations correspond to (suitable) algebras for pseudomonads, unifying the previously unconnected proof-theoretical constructions. These fibrations recover various relevant examples in categorical logic, including the category of polynomials/containers and \textit{a fortiori} (some kinds of) lenses.

For future work, we intend to thoroughly study under which conditions the local isomorphisms considered in this work, i.e., Skolemisation and prenex normal form, extend to global isomorphisms. As previously mentioned, the main idea is to require the existence of a canonical representative and to impose coherence conditions on the representatives.

Finally, we plan to formalize our results in a proof assistant. We estimate that a well-suited framework is given by Hazratpour's recent formalization of fibred categories~\cite{HazratpourLeanFibered} in Lean 4, which would in particular allow for integration into the Mathlib library~\cite{mathlib}.

\paragraph{Related work}
This paper builds on work of \cite{hofstra2011} and \cite{trotta_et_al:LIPIcs.MFCS.2021.87}. Both consider a fibrational view of G\"odel's  Dialectica Interpretation.
The work in \cite{Hyland2002}, \cite{Biering_dialecticainterpretations}, and \cite{mossvonglehn2018} generalise the Dialectica construction, assigning to a Grothendieck fibration $\mathsf p: \mathsf E \to \mathsf B$ (over a finitely complete category $\mathsf B$) its \emph{Dialectica fibration} $\mathfrak{Dial}(\mathsf p): \mathfrak{Dial}(\mathsf E) \to \mathsf B$. The original dependent Dialectica category $\mathsf{Dial}(\mathsf C)$ is recovered (see~\cite{moss2022:polyTalk}) as the fibre over the terminal object $\mathfrak{Dial}(\mathsf{Sum}(!_\mathsf C))_1$ of the Dialectica construction applied to the functor $\mathsf{Sum}(!_\mathsf C) : \mathsf{Sum}(\mathsf C) \to \mathsf{Sum}(1) \simeq \set$, where $\mathsf{Sum}(\cdot)$ denotes the $\set$-indexed free sum completion of a category. In fact, $\mathfrak{Dial}(\mathsf p)$ turning out to be fibred equivalent to the iterated completion of the fibration $\mathsf p$ by first adding fibred products and then fibred sums,  suggests a close connection to von Glehn's~\emph{polynomials}~\cite{vonGlehnPhD}.

\paragraph{Acknowledgments}
For fruitful discussions and helpful feedback we would like to thank Mathieu Anel, Carlo Angiuli, Steve Awodey, Tim Campion, Matteo Capucci, Jonas Frey, Bruno Gavranovi\'{c}, Sina Hazratpour, Milly Maetti, Abdullah Malik, Paul-Andr\'{e} Melli\`{e}s, David Jaz Myers, Emily Riehl, Francisco Rios, Thomas Streicher, and Andrew Swan. This material is based upon work supported by the National Science Foundation under Grant Numbers DMS 1641020 and DMS 1916439, through the American Mathematical Society's Mathematics Research Community (AMS MRC) on applied category theory held in 2022. Any opinions, findings, and conclusions or recommendations expressed in this material are those of the authors and do not necessarily reflect the views of the National Science Foundation. All authors are also grateful to the Hausdorff Research Institute for Mathematics in Bonn, Germany, for hosting us as part of the trimester ``Prospects of formal mathematics,'' funded by the Deutsche Forschungsgemeinschaft (DFG, German Research Foundation) under Germany's Excellence Strategy – EXC-2047/1 – 390685813. Jonathan Weinberger is grateful for financial support by the US Army Research Office under MURI Grant W911NF-20-1-0082.

\bibliographystyle{apalike}
\bibliography{references}

\end{document}